\newcommand{\dx}{\, \mathrm{d}x}
\newcommand{\N}{\mathcal{N}}
\newcommand{\T}{\mathcal{T}}
\newcommand{\V}{\mathcal{V}}
\newcommand{\supp}{\text{supp}}
\newcommand{\CP}{C_{\mathrm{PF}}}
\newcommand{\uref}{u_h}
\newcommand{\uH}{u_H}
\newcommand{\uHL}{u^L_H}
\newcommand{\VGammah}{\V^\Gamma_h}
\newtheorem{theorem}{Theorem}[section]
\newtheorem{lemma}[theorem]{Lemma}
\theoremstyle{definition}
\newtheorem{definition}[theorem]{Definition}
\newtheorem{remark}[theorem]{Remark}
\newtheorem{assumption}[theorem]{Assumption} %
\numberwithin{equation}{section}
\begin{document}

\begin{center}
  {\LARGE Multiscale methods for problems with complex geometry}\\[2em]
\end{center}

\renewcommand{\thefootnote}{\fnsymbol{footnote}}

\begin{center}
{\large Daniel Elfverson\footnote[1]{Department of Mathematics, Ume{\aa} University, SE-901 87 Ume{\aa}, Sweden. Supported by SSF.}, Mats G. Larson\footnote[2]{Department of Mathematics, Ume{\aa} University, SE-901 87 Ume{\aa}, Sweden. Supported by SSF.}, and Axel M{\aa}lqvist\footnote[3]{Department of Mathematical Sciences, Chalmers University of Technology and University of
Gothenburg SE-412 96 G{\"o}oteborg, Sweden. Supported by the Swedish research council and SSF.}
 \renewcommand{\thefootnote}{\arabic{footnote}}\setcounter{footnote}{2}}\\[2em]
\end{center}

\begin{center}
{\large{\today}}
\end{center}

\begin{center}
\end{center}

\begin{abstract}
  We propose a multiscale method for elliptic problems on
  complex domains, e.g. domains with cracks or complicated boundary. For local
  singularities this paper also offers a discrete alternative to enrichment
  techniques such as XFEM. We construct corrected coarse test and trail spaces
  which takes the fine scale features of the computational domain into
  account. The corrections only need to be computed in regions surrounding
  fine scale geometric features. We achieve linear convergence rate in energy
  norm for the multiscale solution. Moreover, the conditioning of the
  resulting matrices is not affected by the way the domain boundary cuts the
  coarse elements in the background mesh. The analytical findings are verified
  in a series of numerical experiments.
\end{abstract}

\section{Introduction}

Partial differential equations with data varying on multiple scales in space
and time, so called {\em multiscale problems}, appear in many areas of science
and engineering. Two of the most prominent examples are composite materials
and flow in a porous medium. Standard numerical techniques may perform
arbitrarily bad for multiscale problems, since the convergence rely on
smoothness of the solution \cite{BaOs00}. Also adaptive techniques
\cite{Ve96}, where local singularities are resolved by local mesh refinement,
fail for multiscale problems since the roughness of the data is often not
localized in space. As a remedy against this issue generalized finite element
methods and other related multiscale techniques have been
developed
\cite{BaCaOs94,HeFeMaQu98,LaMa07,HoWu97,WeEn03,LaMa07,Ma11,MaPe14,OwZhBe14}.
So far these techniques have focused on multiscale coefficients in general and
multiscale diffusion in particular. Significantly less work has been directed
towards handling a computational domain with multiscale boundary. However, in
many applications including voids and cracks in materials and rough surfaces,
multiscale behavior emanates from the complex geometry of the computational
domain. Furthermore, the classical multiscale methods mentioned above aim at,
in different ways, upscaling the multiscale data to a coarse scale where it is
possible to solve the equation to a reasonable computational cost. However,
these techniques typically assume that the representation of the computational
domain is the same on the coarse and fine scale. In practice this is very
difficult to achieve unless the computational domain has a simple shape, which
is not the case in many practical applications.

In this paper we design a multiscale method for
problems with complex computational domain. In order to simplify the
presentation we neglect multiscale coefficients in the analysis even
though the methodology directly extends to this situation. The proposed
algorithm is based on the localized orthogonal decomposition (LOD) technique
presented in \cite{MaPe14} and further developed in
\cite{ElGeMa13,ElGeMaPe13,MaPe15,Peterseim:2014}. In LOD both test and trail
spaces are decomposed into a multiscale space and a reminder space that are
orthogonal with respect to the scalar product induced by the bilinear form
of the problem considered. In this paper we propose and
analyze how the modified multiscale basis functions can be blended with
standard finite element basis functions, allowing them to be used only close
to the complex boundary. We prove optimal convergence and show that the
condition number of the resulting coarse system of equations scales at an
optimal rate with the mesh sizes. The gain of this approach is that the global
solution is computed on the coarse scale, with the accuracy of the fine scale.
Also, all localized fine scale computations needed to enrich the standard
finite element basis are localized and can thus be done in parallel.

Other work on multiscale methods for complex/rough domains are
\cite{Mad09,MV06}, which are based on the multiscale finite element method
(MsFEM). However, the analysis in \cite{Mad09,MV06} is limited to periodic data. An explicit methods to handle
complex geometry is the cut finite element method (CutFEM) \cite{BCHLM14}
which use a robust Nitsche's formulation to weakly enforce the
boundary/interface conditions. See also \cite{GPP94} for an other fictitious domain methods. An
other explicit approach is the composite finite element method \cite{HS97}
which constructs a coarse basis that is fitted to the boundary. The
fictitious domain and composite finite element approaches are useful in e.g.
multigrid methods since they construct a coarse representation on domains with
fine scale features, see also \cite{Yse99}. The technique proposed in this paper is more
related to the extended finite element method (XFEM) \cite{FB10} where the
polynomial approximation space is enriched with {non-polynomial} functions.
The method can be used as a discrete alternative to XFEM, that can be
useful e.g. when the nature of the singularities are unknown.

The outline of the paper is as follows.  In Section~\ref{preliminaries} we
present the model problem and introduce some notation.  In
Section~\ref{multiscale} we formulate a multiscale method for problems where
the mesh does not resolve the boundary. In Section~\ref{error} we analyse the
proposed method in several different steps and finally prove a bound of the
error in energy norm, which shows that the error is of the same order as the
standard finite element method on the coarse mesh for smooth problem. In
Section~\ref{implement} we shortly describe the implementation of the method
and prove a bound of the condition number of the stiffness matrix. In Section~\ref{numerics} we present some numerical
experiment to verify the convergence rate and conditioning of the proposed
method. Finally in the Appendix we prove a technical Lemma.

\section{Preliminaries}
\label{preliminaries}
In this section we present a model problem, introduce some notation,
and define a reference finite element discretization of the model problem.
\subsection{Model problem}
We consider the Poisson equation in a bounded polygonal/polyhedral domain
$\Omega\subset\mathbb{R}^d$ for $d=2,3,$ with a complex/fine scale
boundary $\partial\Omega=\Gamma_D\cup\Gamma_R$. That is, we consider
\begin{equation}
  \begin{aligned}
    -\Delta u &= f\quad&\text{in }&\Omega, \\
    \nu\cdot \nabla u + \kappa u& = 0 \quad&\text{on }&\Gamma_R, \\
    u & = 0 \quad&\text{on }&\Gamma_D,
  \end{aligned}
\end{equation}
where $\nu$ is the exterior unit normal of $\partial\Omega$,
$0\leq \kappa\in\mathbb{R}$, and $f\in L^2(\Omega)$. For simplicity we
assume that, if $\kappa=0$ then $\Gamma_D\neq\emptyset$ to guarantee existence
and uniqueness of the solution $u$.  The weak form of the partial
differential equation reads: find
$u\in \V:= \{v\in H^1(\Omega)\mid v|_{\Gamma_D}=0\}$ such that
\begin{equation}\label{eq:week}
  a(u,v):= \int_{\Omega}\nabla u\cdot\nabla v \dx + \int_{\Gamma_R}\kappa u v \,\mathrm{d}S = \int_\Omega f v\dx  =: F(v),
\end{equation}
for all $v\in \V$. Throughout the paper we use standard notation for
Sobolev spaces \cite{Adams}. We denote the local energy and $L^2$-norm
in a subset $\omega\subset\Omega$ by
\begin{equation}
  ||| v |||_\omega =  \left(\int_{\omega}|\nabla u|^2 \dx + \int_{\Gamma_R\cap\partial\omega}\kappa u^2 \,\mathrm{d}S \right)^{1/2},
\end{equation}
and
\begin{equation}
  \| v \|_\omega =  \left(\int_{\omega}u^2 \dx\right)^{1/2},
\end{equation}
respectively. Moreover, if $\omega=\Omega$ we omit the subscript,
$||| v |||:=||| v |||_\Omega$ and $\| v \|:=\| v \|_\Omega$.

\subsection{The reference finite element method}
\label{sec:reference}

We embed the domain $\Omega$ in a polygonal domain $\Omega_{0}$
equipped with a quasi-uniform and shape regular mesh $\T_{H,0}$, i.e.,
$\Omega\subset\Omega_{0}$ and
$\overline\Omega_{0}=\sum_{T\in\T_{H,0}}\overline T$.  We let $\T_H$
be the sub mesh of $\T_{H,0}$ consisting of elements that are cut or
covered by the physical domain $\Omega$, i.e,
\begin{equation}
  \T_H = \{T\in\T_{H,0} \mid T\cap \Omega\neq \emptyset\}.
\end{equation}
The finite element space on $\T_H$ is defined by
\begin{equation}
  \V_H=\{v\in\mathcal{C}^0(\Omega)\mid \forall T\in\T_H\text{, }v|_T\in\mathcal{P}_1(T)\},
\end{equation}
where $\mathcal{P}_1(T)$ is the space of polynomials of total degree
$\leq 1$ on $T$. We have $\V_H=\text{span}\{\varphi_x\}_{x\in\mathcal{N}}$, where $\mathcal{N}$ is the set of all nodes in the mesh
$\T_H$ and $\varphi_x$ is the linear nodal basis function
associated with node $x\in \mathcal{N}$.

The space $\V_H$ will not be sufficiently fine to represent the boundary data.
We therefore enrich the space $\V_H$ close to the boundary $\partial\Omega$.
In order to construct the enrichment we define $L$-layer patches around the
boundary recursively as follows
  \begin{equation}
    \begin{aligned}
      \omega^0_\Gamma &:= \text{int}\left(( \bar T\in\T_H\mid T\cap \Omega\neq T)\cap\Omega\right), \\
      \omega_\Gamma^\ell &:= \text{int}\left((\bar T \in\T_H\mid \bar T\cap \overline\omega_\Gamma^{\ell-1}\neq \emptyset)\cap\Omega\right),\quad\text{for }\ell = 1,\dots,L. \\
    \end{aligned}
  \end{equation}
Note that $\omega_\Gamma^0$ is the set of all elements which are cut by the
domain boundary $\partial\Omega$. An illustration of
$\Omega$, $\T_{H,0}$, $\omega_\Gamma^0$, and $\omega_\Gamma^1$ are given in
  Figure~\ref{fig:referencespace}.
 \begin{figure}
   \centering
   \includegraphics[width=0.6\textwidth]{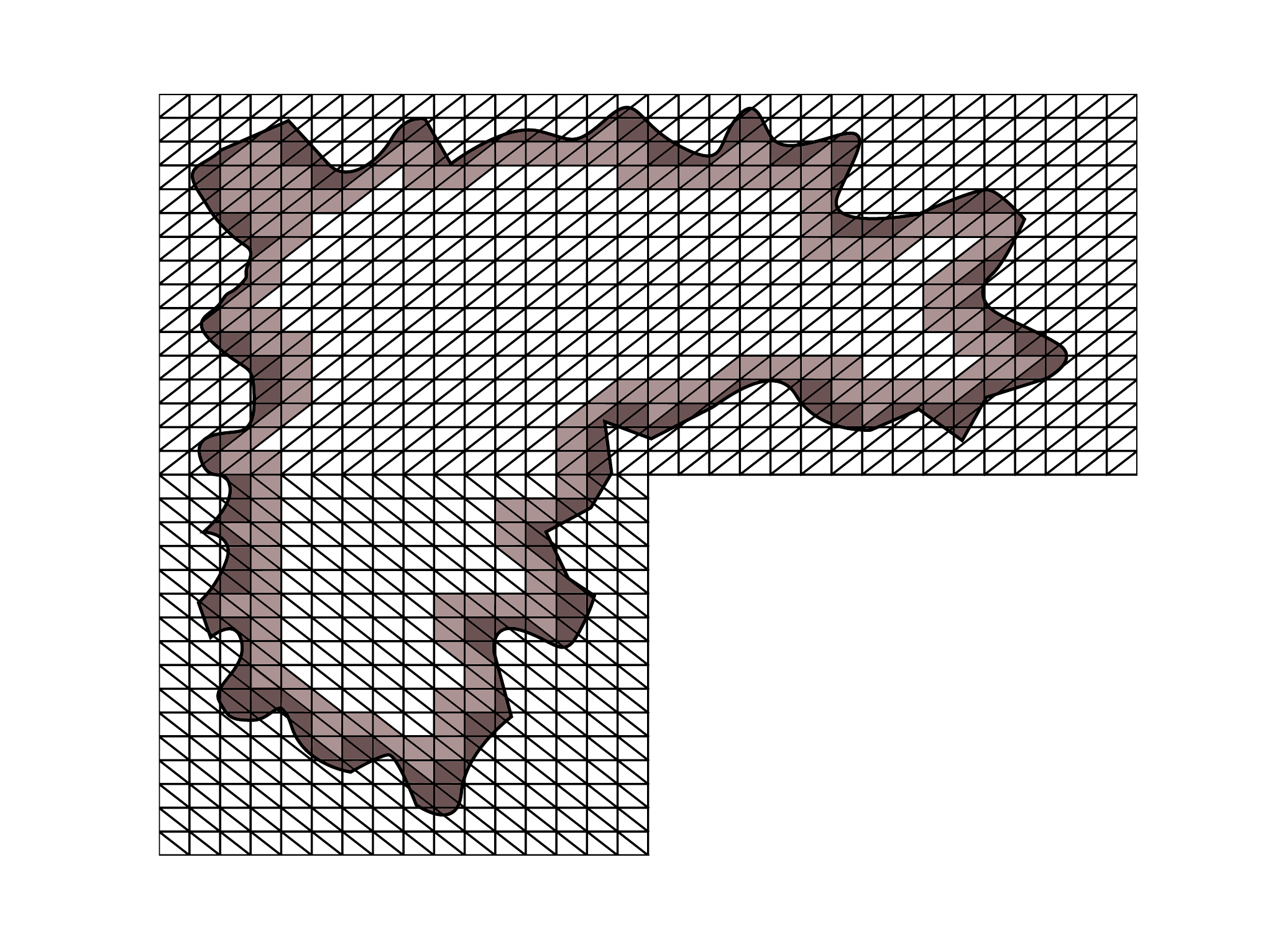}
   \caption{The computational domain $\Omega$ embedded in the mesh
     $\T_{H,0}$. The dark grey area is $\omega_\Gamma^0$, and the union
     of the dark and light grey area is $\omega_\Gamma^1$.}
   \label{fig:referencespace}
 \end{figure}
 We will later see, in Lemma~\ref{lemma:H1enrichment}, that the
 appropriate number of layers is determined by the decay of the
 $H^2$-norm of the exact solution away from the boundary.

 Let $\T_h$ be a fine mesh defined on $\omega_\Gamma^k$, obtained by refining
 the coarse mesh in $\omega_\Gamma^k$. On the interior part of the boundary
 $\partial\omega_\Gamma^k \setminus \partial \Omega$ we allow hanging nodes.
 We define $\V_h(\omega_\Gamma^k)=\{ v\in\mathcal{C}^0(\omega_\Gamma^k) \mid
 \forall T\in\T_h\text{, }v|_T\in\mathcal{P}_1(T) \}$ and a reference finite
 element space by
\begin{equation}
  \VGammah := (\V_H + \V_h(\omega_\Gamma^k))\cap H^1_{\Gamma_D}(\Omega),
\end{equation}
which consists of the standard finite element space enriched with a
locally finer finite element space in $\omega_\Gamma^k$. We assume that
the space $\VGammah$ is fine enough to resolve the boundary, i.e., we assume that the boundary $\partial \Omega$ is exactly represented by the fine mesh $\T_h$.

The finite element method posed in the enriched space $\VGammah$
reads: find $\uref\in\VGammah$ such that
\begin{equation}\label{eq:omegagamma}
  a(\uref,v) = F(v)\quad\text{for all }v\in\VGammah.
\end{equation}
We call the solution to (\ref{eq:omegagamma}) the reference solution.
We have the following a priori error estimate
\begin{equation}\label{eq:uref}
  ||| u - \uref ||| \leq C(H|u|_{H^2(\Omega\setminus\omega^{k-1}_\Gamma)} + h^{s-1}|u|_{H^s(\omega^{k-1}_\Gamma)})
\end{equation}
where $1\leq s\leq 2$ depends on the regularity of $u$ in $\omega^{k-1}_\Gamma$. For a
proof of \eqref{eq:uref} see Section~\ref{sec:estaimte_uref}.

\section{The multiscale method}
\label{multiscale}
In the multiscale method we want to construct a coarse scale
approximation of $u_h$, which can be computed at a low cost. We present the
method in two steps:
\begin{itemize}
\item First, we construct a global multiscale method using a
  corrected coarse basis which takes the fine scale variation of the
  boundary into account.
\item Then, we construct a localized multiscale method where the
  corrected basis is computed on localized patches.
\end{itemize}

\subsection{Global multiscale method}
For each $x \in\mathcal{N}$ (the set of free nodes) we define a
$L$-layer nodal patch recursively by letting
  \begin{equation}
    \begin{aligned}
      \omega_x^0 &=: \text{int}\left((\bar T \in\T_H\mid \bar T\cap x\neq \emptyset)\cap\Omega\right), \\
      \omega_x^\ell &=: \text{int}\left((\bar T \in\T_H\mid\bar T\cap \overline \omega_x^{\ell-1}\neq \emptyset)\cap\Omega\right),\quad\text{for }\ell= 1,\dots,L.
    \end{aligned}
  \end{equation}
We consider a projective Cl\'{e}ment interpolation operator defined by
\begin{equation}\label{eq:clement}
  \mathcal{I}_Hv = \sum_{x\in\N_{I}}(P_xv)(x)\varphi_x,
\end{equation}
where $\N_I$ is the index set of all interior nodes in $\Omega$ and
 $P_x$ is a local $L^2$-projection defined by: find
$P_xv\in\{v\in\V_H\mid \text{supp}(v)\cap\omega_x^0\neq \emptyset \}$ such
that
\begin{equation}
  (P_xv,w)_{\omega_x^0} = (v,w)_{\omega_x^0}\quad\text{for all }w\in \{v\in\V_H\mid \text{supp}(v)\cap\omega_x^0\neq\emptyset \}.
\end{equation}
This is not the same operator as proposed in the original LOD paper
\cite{MaPe14}. We choose the projective Cl\'{e}ment interpolation operator
since projective property simplifies the analysis and that it is slightly more
stable for multiscale problems \cite{BrPe14}.

Using the interpolation operator we split the space $\VGammah$
into the range and the kernel of the interpolation operator, i.e.,
$\V_H=\mathcal{I}_H\VGammah$ and $\V^f=(1-\mathcal{I}_H)\VGammah$.
Since the space $\V_H$ does not have the sufficient approximation
properties we use the same idea as in LOD and construct an
orthogonal splitting with respect to the bilinear form. We define
the corrected coarse space as
\begin{equation}
  \V_H^\Gamma = (1+Q)\mathcal{I}_H\VGammah
\end{equation}
where the operator $Q$ is defined as follows: given $v_H\in \V_H$ find
$Q(v_H)\in\{v\in \V^f \mid v|_{\Gamma_D}=-v_H\}$ such that
\begin{equation}\label{eq:global_corrector}
  \begin{aligned}
  a(Q(v_H),w) &= -a(v_H,w)\quad\text{for all }v\in\{v\in \V^f \mid w|_{\Gamma_D}=0\}.
    \end{aligned}
\end{equation} Note that $\V_H\not\subset \VGammah$ but $\V^\Gamma_H\subset
\VGammah$ because the correctors are solved with boundary conditions that
compensates for the nonconformity of the space $\V_H$ and also that
$Q(v_H)|_{\Omega\setminus\omega_\Gamma^k}=0$ since $\V^f$ only has support in
$\omega_\Gamma^k$. From
\eqref{eq:global_corrector} we have the orthogonality $a(\V^\Gamma_H,\V^f)=0$
and we can write the reference space as the direct sum
$\VGammah=\V^\Gamma_H\oplus_a\V^f$, where the orthogonality is with respect to the bilinear form $a$.

The multiscale method posed in the space $\V_H^\Gamma$ reads: find
$\uH \in \V_H^\Gamma$ such that
\begin{equation}\label{eq:idealgamma}
  a(\uH,v) = F(v)\quad\text{for all }v\in \V_H^\Gamma.
\end{equation}

\begin{remark}\label{rem:global_multiscale_method}
Note that, even in the global multiscale method the support of the correctors
are only in $\omega_\Gamma^k$ which we defined in Section~\ref{sec:reference}.
\end{remark}

\subsection{Localized multiscale method}
Finally, we further localize the computation of the corrected basis functions
to nodal patches on $\omega_\Gamma^k$. Using linearity of the operator
$Q(v_H)$, we obtain
\begin{equation}
  Q(v_H) = \sum_{x\in\N_I}v_H(x)Q(\varphi_x).
\end{equation}
We denote the localized corrector by
\begin{equation}
  Q^L(v_H) = \sum_{x\in\N_I}v_H(x)Q^L_x(\varphi_x).
\end{equation}
where $Q^L_x(\varphi_x)$ is the localization of $Q(\varphi_x)$
computed on an $L$-layer patch. The local correctors are computed
as follows: given $x\in\mathcal{N}_I$ find
$Q^L_x(\varphi_x)\in \{v\in\V^\mathrm{f}\mid
v|_{\Omega\setminus\omega^L_x}=0\text{ and }v|_{\Gamma_D}=
-\varphi_x\}$ such that,
\begin{equation}\label{eq:local_corrector}
  \begin{aligned}
    a( Q^L_x(\varphi_x),w) &= -a( \varphi_x,w)\quad\text{for all }v\in \{v\in\V^\mathrm{f}\mid
v|_{\Omega\setminus\omega^L_x}=0\text{ and }v|_{\Gamma_D}=0\}.
  \end{aligned}
\end{equation}
The localized multiscale method reads: find
$\uHL\in\V^{\Gamma,L}_H:=\text{span}\{\varphi_x +
Q^L_x(\varphi_x)\}_{x\in\mathcal{N}_I}$ such that
\begin{equation}\label{eq:local_multiscale}
  a(\uHL,v) = F(v) \quad\text{for all }v\in\V^{\Gamma,L}_H.
\end{equation}
The space $\V^{\Gamma,L}_H$ has the same dimension as the coarse space $\V_H$
but the basis functions have slightly larger support. The multiscale solution
$\uHL\in\V^{\Gamma,L}_H$ has better approximation properties than the standard
finite element solution on the same mesh satisfy
\begin{equation}
  |||\uref - \uHL||| \leq C_1 H,
\end{equation}
where $H$ is the mesh size, $C_1$ is a constant independent of the fine scale
features of the boundary $\Gamma$, and $L=\lceil C_2 \log_2(H^{-1})\rceil$ for
a constant $C_2$ ($=1.5$ in the numercal experminents). A proof is given in
Theorem \ref{thm:local_enriched_fem} in the coming section.

\section{Error estimates}
\label{error}
In this section we derive our main error estimates. First we present
some technical tools needed to prove the main result:
\begin{itemize}
\item We present an explicit way to compute an upper bound for
  Poincar\'e-Friedrichs constants on complex domains.
\item We prove approximation properties of the interpolation operator
  on these domains.
\end{itemize}
The main result is obtained in four steps:
\begin{itemize}
\item We bound the difference between the analytic solution and the reference finite element solution $|||u-\uref|||$.
\item We bound the difference between the reference finite element
solution and the ideal multiscale method $|||\uref-\uH|||$.
\item We bound the difference between a function $v\in\V_H$ modified
  by the global corrector and localized corrector $|||Q(v)-Q^L(v)|||$.
\item Together these properties are used to estimate the error between
  the analytic solution and the localized multiscale approximation.
\end{itemize}
Furthermore, let  $a\lesssim b$ abbreviate the inequality $a\leq C b$
where $C$ is any generic positive constant independent on the domain
$\Omega$ and of the the coarse and fine mesh sizes $H,h$.

\subsection{Poincar{\'e}-Friedrichs inequality on complex domains}
\label{sec:PF}
A crucial part of the proof is  a Poincar\'e-Friedrichs inequality with a constant of moderate size. The inequality reads: for all $u\in H^1(\omega)$ it holds,
\begin{equation}
  \inf_{c\in\mathbb{R}}\|u - c\|_{\omega}\leq C (\omega)\text{diam}(\omega)\|\nabla u\|_{\omega},
\end{equation}
where the optimal constant is
\begin{equation}
  c = \frac{1}{|\omega|}\int_{\omega}u\dx.
\end{equation}
Following \cite{PeSc12}, we consider inequalities of the following
type: for all $u\in H^1(\omega)$
\begin{equation}\label{eq:PFX}
  \|u - \lambda_\gamma(u)\|_{\omega}\leq C (\omega)\text{diam}(\omega)\|\nabla u\|_{\omega},
\end{equation}
where $\gamma\subset\partial\omega$ is a $(d-1)$-dimensional manifold
and
\begin{equation}
  \lambda_\gamma(u) = \frac{1}{|\gamma|}\int_{\gamma}u\,\mathrm{d}S.
\end{equation}
We introduce the notation $\CP=C(\omega)$, and refer to $\CP$ as the
Poincar\'e-Friedrichs constant, which depends on the domain $\omega$
but not on its diameter.

A direct consequence of \eqref{eq:PFX} is the following inequality
\begin{equation}\label{eq:general_PF}
  \|u\|_{\omega}\lesssim \CP \text{diam}(\omega)\|\nabla u\|_{\omega} + \text{diam}(\omega)^{1/2}\|u\|_{\gamma},
\end{equation}
which holds if $ \mathrm{diam}(\omega)^{d-1}\lesssim |\gamma|$, i.e., the average is taken over a large enough manifold $\gamma\subset \partial \omega$. A short proof is given by
\begin{equation}
  \begin{aligned}
    \|u\|_{\omega} &\leq \|u - \lambda_\gamma(u)\|_{\omega} + \|\lambda_\gamma(u)\|_{\omega} \\
    &\leq \CP\text{diam}(\omega)\|\nabla u\|_{\omega} + |\lambda_\gamma(u)|\cdot|\omega|^{1/2} \\
    &\leq \CP\text{diam}(\omega)\|\nabla u\|_{\omega} + \|\lambda_\gamma(u)\|_{\gamma} |\gamma|^{-1/2}|\omega|^{1/2} \\
    &\lesssim \CP\text{diam}(\omega)\|\nabla u\|_{\omega} + \text{diam}(\omega)^{1/2}\|\lambda_\gamma(u)\|_{\gamma}.
  \end{aligned}
\end{equation}
Furthermore, from \cite{PeSc12} we have the bound $\CP\leq 1$ for the
Poincar{\'e} constant on a $d$-dimensional simplex where $\gamma$ is
one of the facets.

Next we will review some results given in \cite{PeSc12}
applied to domains with complex boundary. In \cite{PeSc12} the notion
of quasi-monotone paths is use to prove weighted Poincar\'e-Friedrichs
type inequalities using average on $(d-1)$-dimensional manifolds
$\gamma\subset \omega$. These results have also been discussed for
perforated domains in \cite{BrPe14}.
\begin{definition}\label{def:path}
  For simplicity we assume that $\omega$ is a polygonal domain that is
  subdivided into a quasi-uniform partition of simplices
  $\tau=\{T_\ell\}_{\ell=1}^n$.  We call the region
  $P_{\ell_1,\ell_2}=(\overline T_{\ell_1}\cup \overline T_{\ell_2}\cup
  \dots\cup \overline T_{\ell_s})$
  a path, if $\bar T_{\ell_i}$ and $\bar T_{\ell_{i+1}}$ share a
  common $(d-1)$-dimensional manifold. We will call
  $s_{\ell_1,\ell_s}:=s$ the length of the path $P_{\ell_1,\ell_s}$
  and $\eta=\max_{T\in\tau}\{\text{diam}(T)\}$.
 \end{definition}
\begin{lemma}\label{lem:PF}
   Given $\tau$ from Definition~\ref{def:path} and the index set $\mathcal{J}=\{\ell: \partial T_\ell \cap \gamma \neq \emptyset\}$ it holds
 \begin{equation}
   \CP^2 \lesssim \frac{s_\mathrm{max}r_\mathrm{max}\eta^{d+1}}{|\gamma|H^2},
 \end{equation}
 where $s_\mathrm{max}=\max (s_{k,j})$ is the length of the
 longest path and
 $r_\mathrm{max}=\max_{i\in
   \mathcal{I}}|\{(s,k)\in\mathcal{I}\times\mathcal{J}\mid T_i\in
 P_{k,j}\}|$
 is the maximum number of times the paths intersect.
\end{lemma}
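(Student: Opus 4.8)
The plan is to follow the quasi-monotone path technique of \cite{PeSc12}, carried out on the partition $\tau$ of $\omega$; throughout I identify $H$ with $\mathrm{diam}(\omega)$, so that by the definition of $\CP$ in \eqref{eq:PFX} it suffices to prove $\|u-\lambda_\gamma(u)\|_\omega^2\lesssim \frac{s_\mathrm{max}r_\mathrm{max}\eta^{d+1}}{|\gamma|}\|\nabla u\|_\omega^2$. For cleanliness I assume $\gamma$ is a union of whole facets of $\tau$; then for $j\in\mathcal{J}$ the piece $\gamma_j:=\gamma\cap\partial T_j$ is a facet of $T_j$ with $|\gamma_j|\sim\eta^{d-1}$, and since $\gamma\subset\partial\omega$ the $\gamma_j$ partition $\gamma$, whence $\sum_{j\in\mathcal{J}}|\gamma_j|=|\gamma|$ and $\lambda_\gamma(u)=\sum_{j\in\mathcal{J}}\frac{|\gamma_j|}{|\gamma|}\lambda_{\gamma_j}(u)$ is a convex combination.

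First I would pass to elementwise averages $\bar u_T:=|T|^{-1}\int_T u\dx$. The elementwise Poincar\'e inequality on the shape-regular simplices gives $\sum_{T\in\tau}\|u-\bar u_T\|_T^2\lesssim\eta^2\|\nabla u\|_\omega^2$, so it remains to control $\sum_{i\in\mathcal{I}}|T_i|\,|\bar u_{T_i}-\lambda_\gamma(u)|^2$. Scaling from the reference simplex supplies two elementary bounds: for simplices $T,T'$ sharing a facet, $|\bar u_T-\bar u_{T'}|\lesssim\eta^{1-d/2}\|\nabla u\|_{T\cup T'}$, and for $j\in\mathcal{J}$, $|\bar u_{T_j}-\lambda_{\gamma_j}(u)|\lesssim\eta^{1-d/2}\|\nabla u\|_{T_j}$.

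Next I would fix, for every pair $(i,j)\in\mathcal{I}\times\mathcal{J}$, a path $P_{i,j}$ from $T_i$ to $T_j$ of length $s_{i,j}\le s_\mathrm{max}$. Telescoping the neighbour bound along $P_{i,j}$ and applying the Cauchy--Schwarz inequality across its at most $s_{i,j}$ steps gives $|\bar u_{T_i}-\bar u_{T_j}|\lesssim\sqrt{s_\mathrm{max}}\,\eta^{1-d/2}\|\nabla u\|_{P_{i,j}}$, and, since $T_j$ lies in $P_{i,j}$, the facet bound is absorbed to yield $|\bar u_{T_i}-\lambda_{\gamma_j}(u)|\lesssim\sqrt{s_\mathrm{max}}\,\eta^{1-d/2}\|\nabla u\|_{P_{i,j}}$. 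Applying Jensen's inequality to the convex combination for $\lambda_\gamma(u)$ gives $|\bar u_{T_i}-\lambda_\gamma(u)|^2\le\sum_{j\in\mathcal{J}}\frac{|\gamma_j|}{|\gamma|}|\bar u_{T_i}-\lambda_{\gamma_j}(u)|^2$; multiplying by $|T_i|\sim\eta^d$, summing over $i$, and rewriting $\sum_{i}\|\nabla u\|_{P_{i,j}}^2=\sum_{T\in\tau}\#\{i\in\mathcal{I}:T\in P_{i,j}\}\,\|\nabla u\|_T^2$,
\begin{equation*}
  \sum_{i\in\mathcal{I}}|T_i|\,|\bar u_{T_i}-\lambda_\gamma(u)|^2\lesssim s_\mathrm{max}\,\eta^{2}\sum_{T\in\tau}\|\nabla u\|_T^2\sum_{j\in\mathcal{J}}\frac{|\gamma_j|}{|\gamma|}\,\#\{i\in\mathcal{I}:T\in P_{i,j}\}.
\end{equation*}
The inner $j$-sum is at most $\frac{\max_j|\gamma_j|}{|\gamma|}\,\#\{(i,j)\in\mathcal{I}\times\mathcal{J}:T\in P_{i,j}\}\lesssim\frac{\eta^{d-1}}{|\gamma|}\,r_\mathrm{max}$, so the right-hand side is $\lesssim\frac{s_\mathrm{max}r_\mathrm{max}\eta^{d+1}}{|\gamma|}\|\nabla u\|_\omega^2$. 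Adding the contribution $\eta^2\|\nabla u\|_\omega^2$ of the first step (of lower order, since $r_\mathrm{max}\ge|\mathcal{I}|\ge|\mathcal{J}|\gtrsim|\gamma|\,\eta^{1-d}$) and dividing by $\mathrm{diam}(\omega)^2=H^2$ yields the stated bound on $\CP^2$.

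The step I expect to be the main obstacle is this last assembly: verifying that, once the telescoped estimates are weighted by $|\gamma_j|/|\gamma|$ and summed over all pairs $(i,j)$, the path-overlap count collapses precisely to $r_\mathrm{max}$ while the surviving factor $\max_j|\gamma_j|/|\gamma|\sim\eta^{d-1}/|\gamma|$ produces the $|\gamma|$ in the denominator. A secondary technical point, which I would treat separately, is relaxing the facet-alignment assumption on $\gamma$, since that assumption underlies both the uniformity of the constant in $|\bar u_{T_j}-\lambda_{\gamma_j}(u)|\lesssim\eta^{1-d/2}\|\nabla u\|_{T_j}$ and the identity $\sum_{j\in\mathcal{J}}|\gamma_j|=|\gamma|$.
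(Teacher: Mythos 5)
Your argument is correct: the elementwise Poincar\'e step, the trace-based neighbour and facet bounds, the telescoping with Cauchy--Schwarz along the paths, and the final overlap count yielding the factor $s_{\mathrm{max}}r_{\mathrm{max}}\eta^{d+1}/|\gamma|$ all go through, and the resulting bound scales correctly (it reproduces $\CP\lesssim 1$ for the paper's fractal and saw-tooth examples with $H=\mathrm{diam}(\omega)$, which matches the paper's usage). The paper gives no proof of its own for this lemma -- it simply cites \cite{PeSc12} -- and your proof is essentially the path/telescoping argument of that reference, with the facet-alignment (fat-intersection) assumption on $\gamma$ that you flag being implicit in that setting as well.
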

\begin{proof}
  See \cite{PeSc12}.
\end{proof}
We will now use Lemma~\ref{lem:PF} to show some cases when $\CP$ can be
bounded independent of the complex/fine scale boundary
$\partial\Omega$.

\paragraph{Fractal domain.}
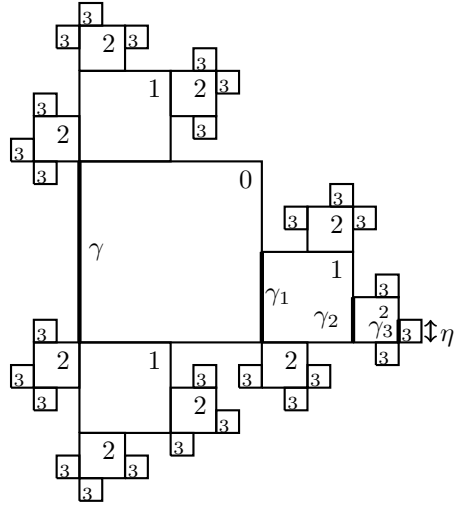
\begin{figure}[htb]
  \centering
\begin{tikzpicture}[scale=0.8]
  \coordinate [label={right:$\gamma$}] (S1) at (0,1.5);
  \coordinate [label={right:$\gamma_1$}] (S1) at (2.9,0.75);
  \coordinate [label={right:$\gamma_2$}] (S1) at (3.7,0.375);
  \coordinate [label={right:$\gamma_3$}] (S1) at (4.6,0.2);

  \newcommand{\sx}{0cm}
  \newcommand{\sy}{0cm}  
  \newcommand{\squarewidth}{3cm}
  \coordinate [label={below right:$ $}] (S1) at (\sx, \sy);
  \coordinate [label={above right:$ $}] (S2) at (\sx, \sy+\squarewidth);
  \draw [ultra thick] (S1) -- (S2);

  \coordinate [label={below right:$ $}] (S1) at (\sx, \sy);
  \coordinate [label={above right:$ $}] (S2) at (\sx, \sy+\squarewidth);
  \coordinate [label={below left:$ 0$}]  (S3) at (\sx+\squarewidth, \sy+\squarewidth);
  \coordinate [label={below left:$ $}]  (S4) at (\sx+\squarewidth, \sy);
  \draw [thick](S1) -- (S2) -- (S3) -- (S4) -- (S1);

  \renewcommand{\sx}{3cm}
  \renewcommand{\sy}{0cm}  
  \renewcommand{\squarewidth}{1.5cm}
  \coordinate [label={below right:$ $}] (S1) at (\sx, \sy);
  \coordinate [label={above right:$ $}] (S2) at (\sx, \sy+\squarewidth);
  \coordinate [label={below left:$ 1$}]  (S3) at (\sx+\squarewidth, \sy+\squarewidth);
  \coordinate [label={below left:$ $}]  (S4) at (\sx+\squarewidth, \sy);
  \draw [thick] (S1) -- (S2) -- (S3) -- (S4) -- (S1);
  \draw [ultra thick] (S1) -- (S2);

  \renewcommand{\sx}{0cm}
  \renewcommand{\sy}{3cm}  
  \renewcommand{\squarewidth}{1.5cm}
  \coordinate [label={below right:$ $}] (S1) at (\sx, \sy);
  \coordinate [label={above right:$ $}] (S2) at (\sx, \sy+\squarewidth);
  \coordinate [label={below left:$ 1$}]  (S3) at (\sx+\squarewidth, \sy+\squarewidth);
  \coordinate [label={below left:$ $}]  (S4) at (\sx+\squarewidth, \sy);
  \draw [thick] (S1) -- (S2) -- (S3) -- (S4) -- (S1);

  \renewcommand{\sx}{0cm}
  \renewcommand{\sy}{-1.5cm}  
  \renewcommand{\squarewidth}{1.5cm}
  \coordinate [label={below right:$ $}] (S1) at (\sx, \sy);
  \coordinate [label={above right:$ $}] (S2) at (\sx, \sy+\squarewidth);
  \coordinate [label={below left:$ 1$}]  (S3) at (\sx+\squarewidth, \sy+\squarewidth);
  \coordinate [label={below left:$ $}]  (S4) at (\sx+\squarewidth, \sy);
  \draw [thick] (S1) -- (S2) -- (S3) -- (S4) -- (S1);

  \renewcommand{\sx}{4.5cm}
  \renewcommand{\sy}{0cm}  
  \renewcommand{\squarewidth}{0.75cm}
  \coordinate [label={below right:$ $}] (S1) at (\sx, \sy);
  \coordinate [label={above right:$ $}] (S2) at (\sx, \sy+\squarewidth);
  \coordinate [label={below left:\scriptsize$ 2$}]  (S3) at (\sx+\squarewidth, \sy+\squarewidth);
  \coordinate [label={below left:$ $}]  (S4) at (\sx+\squarewidth, \sy);
  \draw [thick] (S1) -- (S2) -- (S3) -- (S4) -- (S1);
  \draw [ultra thick] (S1) -- (S2);

  \renewcommand{\sx}{0cm}
  \renewcommand{\sy}{-2.25cm}  
  \renewcommand{\squarewidth}{0.75cm}
  \coordinate [label={below right:$ $}] (S1) at (\sx, \sy);
  \coordinate [label={above right:$ $}] (S2) at (\sx, \sy+\squarewidth);
  \coordinate [label={below left:$ 2$}]  (S3) at (\sx+\squarewidth, \sy+\squarewidth);
  \coordinate [label={below left:$ $}]  (S4) at (\sx+\squarewidth, \sy);
  \draw [thick] (S1) -- (S2) -- (S3) -- (S4) -- (S1);

  \renewcommand{\sx}{0-0.75cm}
  \renewcommand{\sy}{-0.75cm}  
  \renewcommand{\squarewidth}{0.75cm}
  \coordinate [label={below right:$ $}] (S1) at (\sx, \sy);
  \coordinate [label={above right:$ $}] (S2) at (\sx, \sy+\squarewidth);
  \coordinate [label={below left:$ 2$}]  (S3) at (\sx+\squarewidth, \sy+\squarewidth);
  \coordinate [label={below left:$ $}]  (S4) at (\sx+\squarewidth, \sy);
  \draw [thick] (S1) -- (S2) -- (S3) -- (S4) -- (S1);

  \renewcommand{\sx}{1.5cm}
  \renewcommand{\sy}{-1.5cm}  
  \renewcommand{\squarewidth}{0.75cm}
  \coordinate [label={below right:$ $}] (S1) at (\sx, \sy);
  \coordinate [label={above right:$ $}] (S2) at (\sx, \sy+\squarewidth);
  \coordinate [label={below left:$ 2$}]  (S3) at (\sx+\squarewidth, \sy+\squarewidth);
  \coordinate [label={below left:$ $}]  (S4) at (\sx+\squarewidth, \sy);
  \draw [thick] (S1) -- (S2) -- (S3) -- (S4) -- (S1);

  \renewcommand{\sx}{3cm}
  \renewcommand{\sy}{-0.75cm}  
  \renewcommand{\squarewidth}{0.75cm}
  \coordinate [label={below right:$ $}] (S1) at (\sx, \sy);
  \coordinate [label={above right:$ $}] (S2) at (\sx, \sy+\squarewidth);
  \coordinate [label={below left:$ 2$}]  (S3) at (\sx+\squarewidth, \sy+\squarewidth);
  \coordinate [label={below left:$ $}]  (S4) at (\sx+\squarewidth, \sy);
  \draw [thick] (S1) -- (S2) -- (S3) -- (S4) -- (S1);

  \renewcommand{\sx}{3.75cm}
  \renewcommand{\sy}{1.5cm}  
  \renewcommand{\squarewidth}{0.75cm}
  \coordinate [label={below right:$ $}] (S1) at (\sx, \sy);
  \coordinate [label={above right:$ $}] (S2) at (\sx, \sy+\squarewidth);
  \coordinate [label={below left:$ 2$}]  (S3) at (\sx+\squarewidth, \sy+\squarewidth);
  \coordinate [label={below left:$ $}]  (S4) at (\sx+\squarewidth, \sy);
  \draw [thick] (S1) -- (S2) -- (S3) -- (S4) -- (S1);

  \renewcommand{\sx}{0cm}
  \renewcommand{\sy}{4.5cm}  
  \renewcommand{\squarewidth}{0.75cm}
  \coordinate [label={below right:$ $}] (S1) at (\sx, \sy);
  \coordinate [label={above right:$ $}] (S2) at (\sx, \sy+\squarewidth);
  \coordinate [label={below left:$ 2$}]  (S3) at (\sx+\squarewidth, \sy+\squarewidth);
  \coordinate [label={below left:$ $}]  (S4) at (\sx+\squarewidth, \sy);
  \draw [thick] (S1) -- (S2) -- (S3) -- (S4) -- (S1);

  \renewcommand{\sx}{1.5cm}
  \renewcommand{\sy}{3.75cm}  
  \renewcommand{\squarewidth}{0.75cm}
  \coordinate [label={below right:$ $}] (S1) at (\sx, \sy);
  \coordinate [label={above right:$ $}] (S2) at (\sx, \sy+\squarewidth);
  \coordinate [label={below left:$ 2$}]  (S3) at (\sx+\squarewidth, \sy+\squarewidth);
  \coordinate [label={below left:$ $}]  (S4) at (\sx+\squarewidth, \sy);
  \draw [thick] (S1) -- (S2) -- (S3) -- (S4) -- (S1);

  \renewcommand{\sx}{-.75cm}
  \renewcommand{\sy}{3cm}  
  \renewcommand{\squarewidth}{0.75cm}
  \coordinate [label={below right:$ $}] (S1) at (\sx, \sy);
  \coordinate [label={above right:$ $}] (S2) at (\sx, \sy+\squarewidth);
  \coordinate [label={below left:$ 2$}]  (S3) at (\sx+\squarewidth, \sy+\squarewidth);
  \coordinate [label={below left:$ $}]  (S4) at (\sx+\squarewidth, \sy);
  \draw [thick] (S1) -- (S2) -- (S3) -- (S4) -- (S1);

  \renewcommand{\sx}{5.25cm}
  \renewcommand{\sy}{0cm}  
  \renewcommand{\squarewidth}{0.375cm}
  \coordinate [label={below right:$ $}] (S1) at (\sx, \sy);
  \coordinate [label={above right:$ $}] (S2) at (\sx, \sy+\squarewidth);
  \coordinate [label={below left:\scriptsize$ 3$}]  (S3) at (\sx+\squarewidth, \sy+\squarewidth);
  \coordinate [label={below left:$ $}]  (S4) at (\sx+\squarewidth, \sy);
  \draw [thick] (S1) -- (S2) -- (S3) -- (S4) -- (S1);  
  \draw [ultra thick] (S1) -- (S2);

  \coordinate [label={right:$ $}] (S5) at (\sx+\squarewidth+.15cm, \sy);
  \coordinate [label={below right:$\eta $}] (S6) at (\sx+\squarewidth+.15cm, \sy+\squarewidth);
  \draw [thick,<->] (S5) -- (S6);

  \renewcommand{\sx}{4.875cm}
  \renewcommand{\sy}{0.75cm}  
  \renewcommand{\squarewidth}{0.375cm}
  \coordinate [label={below right:$ $}] (S1) at (\sx, \sy);
  \coordinate [label={above right:$ $}] (S2) at (\sx, \sy+\squarewidth);
  \coordinate [label={below left:\scriptsize$ 3$}]  (S3) at (\sx+\squarewidth, \sy+\squarewidth);
  \coordinate [label={below left:$ $}]  (S4) at (\sx+\squarewidth, \sy);
  \draw [thick] (S1) -- (S2) -- (S3) -- (S4) -- (S1);  

  \renewcommand{\sx}{4.875cm}
  \renewcommand{\sy}{-0.375cm}  
  \renewcommand{\squarewidth}{0.375cm}
  \coordinate [label={below right:$ $}] (S1) at (\sx, \sy);
  \coordinate [label={above right:$ $}] (S2) at (\sx, \sy+\squarewidth);
  \coordinate [label={below left:\scriptsize$ 3$}]  (S3) at (\sx+\squarewidth, \sy+\squarewidth);
  \coordinate [label={below left:$ $}]  (S4) at (\sx+\squarewidth, \sy);
  \draw [thick] (S1) -- (S2) -- (S3) -- (S4) -- (S1);  

  \renewcommand{\sx}{0cm}
  \renewcommand{\sy}{5.25cm}  
  \renewcommand{\squarewidth}{0.375cm}
  \coordinate [label={below right:$ $}] (S1) at (\sx, \sy);
  \coordinate [label={above right:$ $}] (S2) at (\sx, \sy+\squarewidth);
  \coordinate [label={below left:\scriptsize$ 3$}]  (S3) at (\sx+\squarewidth, \sy+\squarewidth);
  \coordinate [label={below left:$ $}]  (S4) at (\sx+\squarewidth, \sy);
  \draw [thick] (S1) -- (S2) -- (S3) -- (S4) -- (S1);  

  \renewcommand{\sx}{-0.375cm}
  \renewcommand{\sy}{4.8750cm}  
  \renewcommand{\squarewidth}{0.375cm}
  \coordinate [label={below right:$ $}] (S1) at (\sx, \sy);
  \coordinate [label={above right:$ $}] (S2) at (\sx, \sy+\squarewidth);
  \coordinate [label={below left:\scriptsize$ 3$}]  (S3) at (\sx+\squarewidth, \sy+\squarewidth);
  \coordinate [label={below left:$ $}]  (S4) at (\sx+\squarewidth, \sy);
  \draw [thick] (S1) -- (S2) -- (S3) -- (S4) -- (S1);  

  \renewcommand{\sx}{0.75cm}
  \renewcommand{\sy}{4.8750cm}  
  \renewcommand{\squarewidth}{0.375cm}
  \coordinate [label={below right:$ $}] (S1) at (\sx, \sy);
  \coordinate [label={above right:$ $}] (S2) at (\sx, \sy+\squarewidth);
  \coordinate [label={below left:\scriptsize$ 3$}]  (S3) at (\sx+\squarewidth, \sy+\squarewidth);
  \coordinate [label={below left:$ $}]  (S4) at (\sx+\squarewidth, \sy);
  \draw [thick] (S1) -- (S2) -- (S3) -- (S4) -- (S1);  

  \renewcommand{\sx}{2.25cm}
  \renewcommand{\sy}{4.1250cm}  
  \renewcommand{\squarewidth}{0.375cm}
  \coordinate [label={below right:$ $}] (S1) at (\sx, \sy);
  \coordinate [label={above right:$ $}] (S2) at (\sx, \sy+\squarewidth);
  \coordinate [label={below left:\scriptsize$ 3$}]  (S3) at (\sx+\squarewidth, \sy+\squarewidth);
  \coordinate [label={below left:$ $}]  (S4) at (\sx+\squarewidth, \sy);
  \draw [thick] (S1) -- (S2) -- (S3) -- (S4) -- (S1);  

  \renewcommand{\sx}{1.875cm}
  \renewcommand{\sy}{4.5cm}  
  \renewcommand{\squarewidth}{0.375cm}
  \coordinate [label={below right:$ $}] (S1) at (\sx, \sy);
  \coordinate [label={above right:$ $}] (S2) at (\sx, \sy+\squarewidth);
  \coordinate [label={below left:\scriptsize$ 3$}]  (S3) at (\sx+\squarewidth, \sy+\squarewidth);
  \coordinate [label={below left:$ $}]  (S4) at (\sx+\squarewidth, \sy);
  \draw [thick] (S1) -- (S2) -- (S3) -- (S4) -- (S1);  

  \renewcommand{\sx}{1.875cm}
  \renewcommand{\sy}{3.375cm}  
  \renewcommand{\squarewidth}{0.375cm}
  \coordinate [label={below right:$ $}] (S1) at (\sx, \sy);
  \coordinate [label={above right:$ $}] (S2) at (\sx, \sy+\squarewidth);
  \coordinate [label={below left:\scriptsize$ 3$}]  (S3) at (\sx+\squarewidth, \sy+\squarewidth);
  \coordinate [label={below left:$ $}]  (S4) at (\sx+\squarewidth, \sy);
  \draw [thick] (S1) -- (S2) -- (S3) -- (S4) -- (S1);  

  \renewcommand{\sx}{-1.125cm}
  \renewcommand{\sy}{3cm}  
  \renewcommand{\squarewidth}{0.375cm}
  \coordinate [label={below right:$ $}] (S1) at (\sx, \sy);
  \coordinate [label={above right:$ $}] (S2) at (\sx, \sy+\squarewidth);
  \coordinate [label={below left:\scriptsize$ 3$}]  (S3) at (\sx+\squarewidth, \sy+\squarewidth);
  \coordinate [label={below left:$ $}]  (S4) at (\sx+\squarewidth, \sy);
  \draw [thick] (S1) -- (S2) -- (S3) -- (S4) -- (S1);  

  \renewcommand{\sx}{-0.75cm}
  \renewcommand{\sy}{3.75cm}  
  \renewcommand{\squarewidth}{0.375cm}
  \coordinate [label={below right:$ $}] (S1) at (\sx, \sy);
  \coordinate [label={above right:$ $}] (S2) at (\sx, \sy+\squarewidth);
  \coordinate [label={below left:\scriptsize$ 3$}]  (S3) at (\sx+\squarewidth, \sy+\squarewidth);
  \coordinate [label={below left:$ $}]  (S4) at (\sx+\squarewidth, \sy);
  \draw [thick] (S1) -- (S2) -- (S3) -- (S4) -- (S1);  

  \renewcommand{\sx}{-0.75cm}
  \renewcommand{\sy}{2.6247cm}  
  \renewcommand{\squarewidth}{0.375cm}
  \coordinate [label={below right:$ $}] (S1) at (\sx, \sy);
  \coordinate [label={above right:$ $}] (S2) at (\sx, \sy+\squarewidth);
  \coordinate [label={below left:\scriptsize$ 3$}]  (S3) at (\sx+\squarewidth, \sy+\squarewidth);
  \coordinate [label={below left:$ $}]  (S4) at (\sx+\squarewidth, \sy);
  \draw [thick] (S1) -- (S2) -- (S3) -- (S4) -- (S1);  

  \renewcommand{\sx}{-1.125cm}
  \renewcommand{\sy}{-.75cm}  
  \renewcommand{\squarewidth}{0.375cm}
  \coordinate [label={below right:$ $}] (S1) at (\sx, \sy);
  \coordinate [label={above right:$ $}] (S2) at (\sx, \sy+\squarewidth);
  \coordinate [label={below left:\scriptsize$ 3$}]  (S3) at (\sx+\squarewidth, \sy+\squarewidth);
  \coordinate [label={below left:$ $}]  (S4) at (\sx+\squarewidth, \sy);
  \draw [thick] (S1) -- (S2) -- (S3) -- (S4) -- (S1);  

  \renewcommand{\sx}{-0.75cm}
  \renewcommand{\sy}{0cm}  
  \renewcommand{\squarewidth}{0.375cm}
  \coordinate [label={below right:$ $}] (S1) at (\sx, \sy);
  \coordinate [label={above right:$ $}] (S2) at (\sx, \sy+\squarewidth);
  \coordinate [label={below left:\scriptsize$ 3$}]  (S3) at (\sx+\squarewidth, \sy+\squarewidth);
  \coordinate [label={below left:$ $}]  (S4) at (\sx+\squarewidth, \sy);
  \draw [thick] (S1) -- (S2) -- (S3) -- (S4) -- (S1);  

  \renewcommand{\sx}{-0.75cm}
  \renewcommand{\sy}{-1.1253cm}  
  \renewcommand{\squarewidth}{0.375cm}
  \coordinate [label={below right:$ $}] (S1) at (\sx, \sy);
  \coordinate [label={above right:$ $}] (S2) at (\sx, \sy+\squarewidth);
  \coordinate [label={below left:\scriptsize$ 3$}]  (S3) at (\sx+\squarewidth, \sy+\squarewidth);
  \coordinate [label={below left:$ $}]  (S4) at (\sx+\squarewidth, \sy);
  \draw [thick] (S1) -- (S2) -- (S3) -- (S4) -- (S1);  

  \renewcommand{\sx}{0cm}
  \renewcommand{\sy}{-2.6250cm}  
  \renewcommand{\squarewidth}{0.375cm}
  \coordinate [label={below right:$ $}] (S1) at (\sx, \sy);
  \coordinate [label={above right:$ $}] (S2) at (\sx, \sy+\squarewidth);
  \coordinate [label={below left:\scriptsize$ 3$}]  (S3) at (\sx+\squarewidth, \sy+\squarewidth);
  \coordinate [label={below left:$ $}]  (S4) at (\sx+\squarewidth, \sy);
  \draw [thick] (S1) -- (S2) -- (S3) -- (S4) -- (S1);  

  \renewcommand{\sx}{-0.375cm}
  \renewcommand{\sy}{-2.25cm}  
  \renewcommand{\squarewidth}{0.375cm}
  \coordinate [label={below right:$ $}] (S1) at (\sx, \sy);
  \coordinate [label={above right:$ $}] (S2) at (\sx, \sy+\squarewidth);
  \coordinate [label={below left:\scriptsize$ 3$}]  (S3) at (\sx+\squarewidth, \sy+\squarewidth);
  \coordinate [label={below left:$ $}]  (S4) at (\sx+\squarewidth, \sy);
  \draw [thick] (S1) -- (S2) -- (S3) -- (S4) -- (S1);  

  \renewcommand{\sx}{0.75cm}
  \renewcommand{\sy}{-2.25cm}  
  \renewcommand{\squarewidth}{0.375cm}
  \coordinate [label={below right:$ $}] (S1) at (\sx, \sy);
  \coordinate [label={above right:$ $}] (S2) at (\sx, \sy+\squarewidth);
  \coordinate [label={below left:\scriptsize$ 3$}]  (S3) at (\sx+\squarewidth, \sy+\squarewidth);
  \coordinate [label={below left:$ $}]  (S4) at (\sx+\squarewidth, \sy);
  \draw [thick] (S1) -- (S2) -- (S3) -- (S4) -- (S1);  

  \renewcommand{\sx}{1.5cm}
  \renewcommand{\sy}{-1.8750cm}  
  \renewcommand{\squarewidth}{0.375cm}
  \coordinate [label={below right:$ $}] (S1) at (\sx, \sy);
  \coordinate [label={above right:$ $}] (S2) at (\sx, \sy+\squarewidth);
  \coordinate [label={below left:\scriptsize$ 3$}]  (S3) at (\sx+\squarewidth, \sy+\squarewidth);
  \coordinate [label={below left:$ $}]  (S4) at (\sx+\squarewidth, \sy);
  \draw [thick] (S1) -- (S2) -- (S3) -- (S4) -- (S1);  

  \renewcommand{\sx}{2.25cm}
  \renewcommand{\sy}{-1.5cm}  
  \renewcommand{\squarewidth}{0.375cm}
  \coordinate [label={below right:$ $}] (S1) at (\sx, \sy);
  \coordinate [label={above right:$ $}] (S2) at (\sx, \sy+\squarewidth);
  \coordinate [label={below left:\scriptsize$ 3$}]  (S3) at (\sx+\squarewidth, \sy+\squarewidth);
  \coordinate [label={below left:$ $}]  (S4) at (\sx+\squarewidth, \sy);
  \draw [thick] (S1) -- (S2) -- (S3) -- (S4) -- (S1);  

  \renewcommand{\sx}{1.875cm}
  \renewcommand{\sy}{-0.75cm}  
  \renewcommand{\squarewidth}{0.375cm}
  \coordinate [label={below right:$ $}] (S1) at (\sx, \sy);
  \coordinate [label={above right:$ $}] (S2) at (\sx, \sy+\squarewidth);
  \coordinate [label={below left:\scriptsize$ 3$}]  (S3) at (\sx+\squarewidth, \sy+\squarewidth);
  \coordinate [label={below left:$ $}]  (S4) at (\sx+\squarewidth, \sy);
  \draw [thick] (S1) -- (S2) -- (S3) -- (S4) -- (S1);  

  \renewcommand{\sx}{2.6250cm}
  \renewcommand{\sy}{-0.75cm}  
  \renewcommand{\squarewidth}{0.375cm}
  \coordinate [label={below right:$ $}] (S1) at (\sx, \sy);
  \coordinate [label={above right:$ $}] (S2) at (\sx, \sy+\squarewidth);
  \coordinate [label={below left:\scriptsize$ 3$}]  (S3) at (\sx+\squarewidth, \sy+\squarewidth);
  \coordinate [label={below left:$ $}]  (S4) at (\sx+\squarewidth, \sy);
  \draw [thick] (S1) -- (S2) -- (S3) -- (S4) -- (S1);  

  \renewcommand{\sx}{3.75cm}
  \renewcommand{\sy}{-0.75cm}  
  \renewcommand{\squarewidth}{0.375cm}
  \coordinate [label={below right:$ $}] (S1) at (\sx, \sy);
  \coordinate [label={above right:$ $}] (S2) at (\sx, \sy+\squarewidth);
  \coordinate [label={below left:\scriptsize$ 3$}]  (S3) at (\sx+\squarewidth, \sy+\squarewidth);
  \coordinate [label={below left:$ $}]  (S4) at (\sx+\squarewidth, \sy);
  \draw [thick] (S1) -- (S2) -- (S3) -- (S4) -- (S1);  

  \renewcommand{\sx}{3.3750cm}
  \renewcommand{\sy}{-1.125cm}  
  \renewcommand{\squarewidth}{0.375cm}
  \coordinate [label={below right:$ $}] (S1) at (\sx, \sy);
  \coordinate [label={above right:$ $}] (S2) at (\sx, \sy+\squarewidth);
  \coordinate [label={below left:\scriptsize$ 3$}]  (S3) at (\sx+\squarewidth, \sy+\squarewidth);
  \coordinate [label={below left:$ $}]  (S4) at (\sx+\squarewidth, \sy);
  \draw [thick] (S1) -- (S2) -- (S3) -- (S4) -- (S1);  

  \renewcommand{\sx}{4.1250cm}
  \renewcommand{\sy}{2.25cm}  
  \renewcommand{\squarewidth}{0.375cm}
  \coordinate [label={below right:$ $}] (S1) at (\sx, \sy);
  \coordinate [label={above right:$ $}] (S2) at (\sx, \sy+\squarewidth);
  \coordinate [label={below left:\scriptsize$ 3$}]  (S3) at (\sx+\squarewidth, \sy+\squarewidth);
  \coordinate [label={below left:$ $}]  (S4) at (\sx+\squarewidth, \sy);
  \draw [thick] (S1) -- (S2) -- (S3) -- (S4) -- (S1);  

  \renewcommand{\sx}{4.5cm}
  \renewcommand{\sy}{1.875cm}  
  \renewcommand{\squarewidth}{0.375cm}
  \coordinate [label={below right:$ $}] (S1) at (\sx, \sy);
  \coordinate [label={above right:$ $}] (S2) at (\sx, \sy+\squarewidth);
  \coordinate [label={below left:\scriptsize$ 3$}]  (S3) at (\sx+\squarewidth, \sy+\squarewidth);
  \coordinate [label={below left:$ $}]  (S4) at (\sx+\squarewidth, \sy);
  \draw [thick] (S1) -- (S2) -- (S3) -- (S4) -- (S1);  

  \renewcommand{\sx}{3.3750cm}
  \renewcommand{\sy}{1.875cm}  
  \renewcommand{\squarewidth}{0.375cm}
  \coordinate [label={below right:$ $}] (S1) at (\sx, \sy);
  \coordinate [label={above right:$ $}] (S2) at (\sx, \sy+\squarewidth);
  \coordinate [label={below left:\scriptsize$ 3$}]  (S3) at (\sx+\squarewidth, \sy+\squarewidth);
  \coordinate [label={below left:$ $}]  (S4) at (\sx+\squarewidth, \sy);
  \draw [thick] (S1) -- (S2) -- (S3) -- (S4) -- (S1);
\end{tikzpicture}
  \caption{A fractal domain that has a bounded Poincar\'e-Friedrichs constant.}
  \label{fig:fractal}
\end{figure}
We consider the fractal shaped domain given in
Figure~\ref{fig:fractal}. First we compute $s_\mathrm{max}$.  The
number of $T_\ell$ on $\gamma$ is then proportional to $2^k$, where $k$
is the total number of uniform refinements of the domain and we bound
the maximum path length as
\begin{equation}
  s_\mathrm{max}\sim\sum_{i=0}^{k}\frac{2^k}{2^i} \leq 2\cdot 2^{k},
\end{equation}
i.e., the maximum length of a path is proportional to $2^k$. Next we compute
the maximum number of times a simplex is in a path, $r_\mathrm{max}$. First we
show how many times the elements besides $\gamma$ are in a path and then we
show that this number is larger than on any other $\gamma_i$, see
Figure~\ref{fig:fractal}. The number of paths on each $T_\ell$ is the total
number of elements in the domain. On $\gamma$ we get
\begin{equation}
  r_\gamma \sim \sum^{k}_{i=0} n_\square(i)e_\square(i) = \sum^{k}_{i=0} 3^i \frac{(2^{k})^2}{4^{i}}\leq 4^{k}\sum^{k}_{i=0}  \left(\frac{3}{4}\right)^i \leq 4\cdot4^{k},
\end{equation}
where $n_\square(i)$ is the number sub domains with index $i$ and
$e_\square(i)$ is the number of elements inside a single sub domain
with index $i$. Next we show that there is no $T$ in other parts of
the domain where the number of paths is proportional to something
with a stronger dependence on $n$ than $r_\gamma$. We obtain,
\begin{equation}
  \begin{aligned}
    r_{\gamma^j} &\sim \frac{2^j}{3^j}\sum^{k}_{i=j} n_\square(i)e_\square(i) < r_\gamma,
  \end{aligned}
\end{equation}
where $2^j$ comes from that $\gamma^j$ is $2^j$ times smaller than $\gamma$
and $1/3^j$ since the area of the domain affecting boundary $r_{\gamma^j}$ is
less than $3^j$ of the total area. This proves that $r_\mathrm{max}\sim
r_\gamma$, choosing structured paths in the interior of the squares. To finish
the argument we note that $H/\eta = 2^k$ and $|\gamma|=H$, and we obtain
\begin{equation}
  \CP^2 \lesssim \frac{s_\mathrm{max}r_\mathrm{max}\eta^{d+1}}{|\gamma|H^2} \lesssim  1.
\end{equation}

\paragraph{Saw tooth domain.}
An other example of a complex geometry is the saw domain given in
Figure~\ref{fig:saw}.
\begin{figure}[htb]
  \centering
\begin{tikzpicture}[scale=0.8]
  \coordinate [label={right:$\gamma$}] (S1) at (0,1.5);

  \newcommand{\sx}{0cm}
  \newcommand{\sy}{0cm}  
  \newcommand{\squarewidth}{3cm}
  \coordinate [label={below right:$ $}] (S1) at (\sx, \sy);
  \coordinate [label={above right:$ $}] (S2) at (\sx, \sy+\squarewidth);
  \draw [ultra thick] (S1) -- (S2);

  \coordinate [label={below right:$ $}] (S1) at (\sx, \sy);
  \coordinate [label={above right:$ $}] (S2) at (\sx, \sy+\squarewidth);
  \coordinate [label={below left:$ $}]  (S3) at (\sx+\squarewidth, \sy+\squarewidth);
  \coordinate [label={below left:$ $}]  (S4) at (\sx+\squarewidth, \sy);
  \draw [thick] (S1) -- (S2) -- (S3);
  \draw [thick] (S4) -- (S1);

  \renewcommand{\sx}{3cm}
  \renewcommand{\sy}{0cm}  
  \newcommand{\len}{1cm}
  \newcommand{\wid}{0.1cm}
  \coordinate [label={below right:$ $}] (S1) at (\sx, \sy);
  \coordinate [label={above right:$ $}] (S2) at (\sx, \sy+\wid);
  \coordinate [label={below left:$ $}]  (S3) at (\sx+\len, \sy+\wid);
  \coordinate [label={below left:$ $}]  (S4) at (\sx+\len, \sy);
  \draw [thick]  (S2) -- (S3) --  (S4) -- (S1);

  \renewcommand{\sx}{3cm}
  \renewcommand{\sy}{0.2cm}  
  \coordinate [label={below right:$ $}] (S1) at (\sx, \sy);
  \coordinate [label={above right:$ $}] (S2) at (\sx, \sy+\wid);
  \coordinate [label={below left:$ $}]  (S3) at (\sx+\len, \sy+\wid);
  \coordinate [label={below left:$ $}]  (S4) at (\sx+\len, \sy);
  \draw [thick]  (S2) -- (S3) --  (S4) -- (S1);

  \renewcommand{\sx}{3cm}
  \renewcommand{\sy}{0.4cm}  
  \coordinate [label={below right:$ $}] (S1) at (\sx, \sy);
  \coordinate [label={above right:$ $}] (S2) at (\sx, \sy+\wid);
  \coordinate [label={below left:$ $}]  (S3) at (\sx+\len, \sy+\wid);
  \coordinate [label={below left:$ $}]  (S4) at (\sx+\len, \sy);
  \draw [thick]  (S2) -- (S3) --  (S4) -- (S1);

  \renewcommand{\sx}{3cm}
  \renewcommand{\sy}{0.6cm}  
  \coordinate [label={below right:$ $}] (S1) at (\sx, \sy);
  \coordinate [label={above right:$ $}] (S2) at (\sx, \sy+\wid);
  \coordinate [label={below left:$ $}]  (S3) at (\sx+\len, \sy+\wid);
  \coordinate [label={below left:$ $}]  (S4) at (\sx+\len, \sy);
  \draw [thick]  (S2) -- (S3) --  (S4) -- (S1);

  \renewcommand{\sx}{3cm}
  \renewcommand{\sy}{0.8cm}  
  \coordinate [label={below right:$ $}] (S1) at (\sx, \sy);
  \coordinate [label={above right:$ $}] (S2) at (\sx, \sy+\wid);
  \coordinate [label={below left:$ $}]  (S3) at (\sx+\len, \sy+\wid);
  \coordinate [label={below left:$ $}]  (S4) at (\sx+\len, \sy);
  \draw [thick]  (S2) -- (S3) --  (S4) -- (S1);

  \renewcommand{\sx}{3cm}
  \renewcommand{\sy}{1.0cm}  
  \coordinate [label={below right:$ $}] (S1) at (\sx, \sy);
  \coordinate [label={above right:$ $}] (S2) at (\sx, \sy+\wid);
  \coordinate [label={below left:$ $}]  (S3) at (\sx+\len, \sy+\wid);
  \coordinate [label={below left:$ $}]  (S4) at (\sx+\len, \sy);
  \draw [thick]  (S2) -- (S3) --  (S4) -- (S1);

  \renewcommand{\sx}{3cm}
  \renewcommand{\sy}{1.2cm}  
  \coordinate [label={below right:$ $}] (S1) at (\sx, \sy);
  \coordinate [label={above right:$ $}] (S2) at (\sx, \sy+\wid);
  \coordinate [label={below left:$ $}]  (S3) at (\sx+\len, \sy+\wid);
  \coordinate [label={below left:$ $}]  (S4) at (\sx+\len, \sy);
  \draw [thick]  (S2) -- (S3) --  (S4) -- (S1);
  \renewcommand{\sx}{3cm}
  \renewcommand{\sy}{1.4cm}  
  \coordinate [label={below right:$ $}] (S1) at (\sx, \sy);
  \coordinate [label={above right:$ $}] (S2) at (\sx, \sy+\wid);
  \coordinate [label={below left:$ $}]  (S3) at (\sx+\len, \sy+\wid);
  \coordinate [label={below left:$ $}]  (S4) at (\sx+\len, \sy);
  \draw [thick]  (S2) -- (S3) --  (S4) -- (S1);
  \renewcommand{\sx}{3cm}
  \renewcommand{\sy}{1.6cm}  
  \coordinate [label={below right:$ $}] (S1) at (\sx, \sy);
  \coordinate [label={above right:$ $}] (S2) at (\sx, \sy+\wid);
  \coordinate [label={below left:$ $}]  (S3) at (\sx+\len, \sy+\wid);
  \coordinate [label={below left:$ $}]  (S4) at (\sx+\len, \sy);
  \draw [thick]  (S2) -- (S3) --  (S4) -- (S1);
  \renewcommand{\sx}{3cm}
  \renewcommand{\sy}{1.8cm}  
  \coordinate [label={below right:$ $}] (S1) at (\sx, \sy);
  \coordinate [label={above right:$ $}] (S2) at (\sx, \sy+\wid);
  \coordinate [label={below left:$ $}]  (S3) at (\sx+\len, \sy+\wid);
  \coordinate [label={below left:$ $}]  (S4) at (\sx+\len, \sy);
  \draw [thick]  (S2) -- (S3) --  (S4) -- (S1);
  \renewcommand{\sx}{3cm}
  \renewcommand{\sy}{2.0cm}  
  \coordinate [label={below right:$ $}] (S1) at (\sx, \sy);
  \coordinate [label={above right:$ $}] (S2) at (\sx, \sy+\wid);
  \coordinate [label={below left:$ $}]  (S3) at (\sx+\len, \sy+\wid);
  \coordinate [label={below left:$ $}]  (S4) at (\sx+\len, \sy);
  \draw [thick]  (S2) -- (S3) --  (S4) -- (S1);
  \renewcommand{\sx}{3cm}
  \renewcommand{\sy}{2.2cm}  
  \coordinate [label={below right:$ $}] (S1) at (\sx, \sy);
  \coordinate [label={above right:$ $}] (S2) at (\sx, \sy+\wid);
  \coordinate [label={below left:$ $}]  (S3) at (\sx+\len, \sy+\wid);
  \coordinate [label={below left:$ $}]  (S4) at (\sx+\len, \sy);
  \draw [thick]  (S2) -- (S3) --  (S4) -- (S1);

  \renewcommand{\sx}{3cm}
  \renewcommand{\sy}{2.4cm}  
  \coordinate [label={below right:$ $}] (S1) at (\sx, \sy);
  \coordinate [label={above right:$ $}] (S2) at (\sx, \sy+\wid);
  \coordinate [label={below left:$ $}]  (S3) at (\sx+\len, \sy+\wid);
  \coordinate [label={below left:$ $}]  (S4) at (\sx+\len, \sy);
  \draw [thick]  (S2) -- (S3) --  (S4) -- (S1);

  \renewcommand{\sx}{3cm}
  \renewcommand{\sy}{2.6cm}  
  \coordinate [label={below right:$ $}] (S1) at (\sx, \sy);
  \coordinate [label={above right:$ $}] (S2) at (\sx, \sy+\wid);
  \coordinate [label={below left:$ $}]  (S3) at (\sx+\len, \sy+\wid);
  \coordinate [label={below left:$ $}]  (S4) at (\sx+\len, \sy);
  \draw [thick]  (S2) -- (S3) --  (S4) -- (S1);

  \renewcommand{\sx}{3cm}
  \renewcommand{\sy}{2.8cm}  
  \coordinate [label={below right:$ $}] (S1) at (\sx, \sy);
  \coordinate [label={above right:$ $}] (S2) at (\sx, \sy+\wid);
  \coordinate [label={below left:$ $}]  (S3) at (\sx+\len, \sy+\wid);
  \coordinate [label={below left:$ $}]  (S4) at (\sx+\len, \sy);
  \draw [thick]  (S2) -- (S3) --  (S4) -- (S1);

  \renewcommand{\sx}{3cm}
  \renewcommand{\sy}{0.1cm}  
  \coordinate [label={below right:$ $}] (S1) at (\sx, \sy);
  \coordinate [label={above right:$ $}] (S2) at (\sx, \sy+\wid);
  \draw [thick]  (S2) -- (S1);

  \renewcommand{\sx}{3cm}
  \renewcommand{\sy}{0.3cm}  
  \coordinate [label={below right:$ $}] (S1) at (\sx, \sy);
  \coordinate [label={above right:$ $}] (S2) at (\sx, \sy+\wid);
  \draw [thick]  (S2) -- (S1);
  \renewcommand{\sx}{3cm}
  \renewcommand{\sy}{0.5cm}  
  \coordinate [label={below right:$ $}] (S1) at (\sx, \sy);
  \coordinate [label={above right:$ $}] (S2) at (\sx, \sy+\wid);
  \draw [thick]  (S2) -- (S1);
  \renewcommand{\sx}{3cm}
  \renewcommand{\sy}{0.7cm}  
  \coordinate [label={below right:$ $}] (S1) at (\sx, \sy);
  \coordinate [label={above right:$ $}] (S2) at (\sx, \sy+\wid);
  \draw [thick]  (S2) -- (S1);
  \renewcommand{\sx}{3cm}
  \renewcommand{\sy}{0.9cm}  
  \coordinate [label={below right:$ $}] (S1) at (\sx, \sy);
  \coordinate [label={above right:$ $}] (S2) at (\sx, \sy+\wid);
  \draw [thick]  (S2) -- (S1);
  \renewcommand{\sx}{3cm}
  \renewcommand{\sy}{1.1cm}  
  \coordinate [label={below right:$ $}] (S1) at (\sx, \sy);
  \coordinate [label={above right:$ $}] (S2) at (\sx, \sy+\wid);
  \draw [thick]  (S2) -- (S1);
  \renewcommand{\sx}{3cm}
  \renewcommand{\sy}{1.3cm}  
  \coordinate [label={below right:$ $}] (S1) at (\sx, \sy);
  \coordinate [label={above right:$ $}] (S2) at (\sx, \sy+\wid);
  \draw [thick]  (S2) -- (S1);
  \renewcommand{\sx}{3cm}
  \renewcommand{\sy}{1.5cm}  
  \coordinate [label={below right:$ $}] (S1) at (\sx, \sy);
  \coordinate [label={above right:$ $}] (S2) at (\sx, \sy+\wid);
  \draw [thick]  (S2) -- (S1);
  \renewcommand{\sx}{3cm}
  \renewcommand{\sy}{1.7cm}  
  \coordinate [label={below right:$ $}] (S1) at (\sx, \sy);
  \coordinate [label={above right:$ $}] (S2) at (\sx, \sy+\wid);
  \draw [thick]  (S2) -- (S1);
  \renewcommand{\sx}{3cm}
  \renewcommand{\sy}{1.9cm}  
  \coordinate [label={below right:$ $}] (S1) at (\sx, \sy);
  \coordinate [label={above right:$ $}] (S2) at (\sx, \sy+\wid);
  \draw [thick]  (S2) -- (S1);
  \renewcommand{\sx}{3cm}
  \renewcommand{\sy}{2.1cm}  
  \coordinate [label={below right:$ $}] (S1) at (\sx, \sy);
  \coordinate [label={above right:$ $}] (S2) at (\sx, \sy+\wid);
  \draw [thick]  (S2) -- (S1);
  \renewcommand{\sx}{3cm}
  \renewcommand{\sy}{2.3cm}  
  \coordinate [label={below right:$ $}] (S1) at (\sx, \sy);
  \coordinate [label={above right:$ $}] (S2) at (\sx, \sy+\wid);
  \draw [thick]  (S2) -- (S1);
  \renewcommand{\sx}{3cm}
  \renewcommand{\sy}{2.5cm}  
  \coordinate [label={below right:$ $}] (S1) at (\sx, \sy);
  \coordinate [label={above right:$ $}] (S2) at (\sx, \sy+\wid);
  \draw [thick]  (S2) -- (S1);
  \renewcommand{\sx}{3cm}
  \renewcommand{\sy}{2.7cm}  
  \coordinate [label={below right:$ $}] (S1) at (\sx, \sy);
  \coordinate [label={above right:$ $}] (S2) at (\sx, \sy+\wid);
  \draw [thick]  (S2) -- (S1);
  \renewcommand{\sx}{3cm}
  \renewcommand{\sy}{2.9cm}  
  \coordinate [label={below right:$ $}] (S1) at (\sx, \sy);
  \coordinate [label={above right:$ $}] (S2) at (\sx, \sy+\wid);
  \draw [thick]  (S2) -- (S1);
\end{tikzpicture}
  \caption{Saw tooth domain that has a bounded Poincar\'e-Friedrichs constant. Here $\eta$ is the width of one of the saw teeth.}
  \label{fig:saw}
\end{figure}
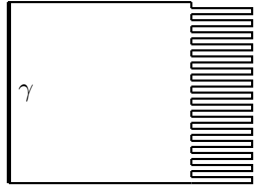
Let the width of the saw teeth be $\eta = 2^{-k}$. A mesh constructed using
$2^k$ uniform refinements are needed to resolve the saw teeth. It is clear
that $s_\mathrm{max}\sim 2^k$ and choosing the structural paths we have that
$r_\mathrm{max}\sim (2^k)^2$. Again we have that $\CP \lesssim 1$ as long the
length of the saw teeth are fixed.

An example of a domain with a non-bounded Poincar\'e-Friedrichs
constant is e.g. a dumbbell domain.

\subsection{Estimation of the interpolation error}
In this section we compute the interpolation error for a class of fine
scale functions needed in the analysis.  For each $T\in\mathcal{T}_H$
we define an $L$-layer element patch recursively as
\begin{equation}
  \begin{aligned}
    \omega_T^0 &:= T\cap\Omega, \\
    \omega_T^\ell &:= \text{int}\left((\bar T \in\T_H\mid\bar T\cap \overline\omega_T^{\ell-1}\neq 0)\cap\Omega\right),\quad\text{for }\ell= 1,\dots,L.
  \end{aligned}
\end{equation}
\begin{lemma}
  The projective Cl\'{e}ment type operator inherits the local approximation and stability properties for all interior elements, i.e., for all $v\in H^1(\Omega)$
  \begin{equation}
    \|H^{-1}(v-\mathcal{I}_Hv) \|_{T} + \|\nabla \mathcal{I}_Hv\|_{T} \lesssim \|\nabla v\|_{\omega_T^1},
  \end{equation}
  holds for all interior elements $T\in\T_H$.
\end{lemma}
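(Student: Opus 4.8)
The plan is the classical argument for a Cl\'ement-type quasi-interpolant, localised to $T$ and its patch $\omega_T^1$; the only point requiring attention is \emph{why the complex boundary does not enter the estimate for interior elements}. Fix an interior element $T$ and write $\mathcal{I}_Hv|_T=\sum_{x}(P_xv)(x)\varphi_x|_T$, the sum running over the vertices $x$ of $T$. Since $T$ is interior, each such $x$ lies in $\mathcal{N}_I$, so the partition of unity $\sum_x\varphi_x\equiv1$ is complete on $T$; moreover the constant function restricted to the interior patch $\omega_x^0$ belongs to the local space onto which $P_x$ projects, so each $P_x$ reproduces constants, and the Poincar\'e inequality holds on $\omega_x^0\subset\omega_T^1$ with a constant depending only on shape regularity (not on $H$, nor on how $\partial\Omega$ cuts far-away elements).

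The technical core is a nodal bound obtained by combining that $P_x$ is an $L^2(\omega_x^0)$-contraction, that it reproduces constants, and the inverse estimate $\|z\|_{L^\infty(\omega_x^0)}\lesssim H^{-d/2}\|z\|_{\omega_x^0}$ for finite element functions $z$: for any $c\in\mathbb{R}$,
\begin{align*}
|(P_xv)(x)-c| &= |(P_x(v-c))(x)| \lesssim H^{-d/2}\|P_x(v-c)\|_{\omega_x^0}\\
&\lesssim H^{-d/2}\|v-c\|_{\omega_x^0}\le H^{-d/2}\|v-c\|_{\omega_T^1}.
\end{align*}
Taking $c$ to be the mean of $v$ over $\omega_T^1$ and applying Poincar\'e there gives $|(P_xv)(x)-c|\lesssim H^{1-d/2}\|\nabla v\|_{\omega_T^1}$. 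For the gradient term I subtract the constant state, $\nabla\mathcal{I}_Hv|_T=\sum_x((P_xv)(x)-c)\nabla\varphi_x|_T$ (valid since $\sum_x\nabla\varphi_x\equiv0$ on $T$), and use $\|\nabla\varphi_x\|_T\lesssim H^{d/2-1}$ to get $\|\nabla\mathcal{I}_Hv\|_T\lesssim\|\nabla v\|_{\omega_T^1}$. For the approximation term I use that $\mathcal{I}_H$ reproduces constants on $T$, so $v-\mathcal{I}_Hv=(v-c)-\mathcal{I}_H(v-c)$ on $T$; then $\|v-c\|_T\le\|v-c\|_{\omega_T^1}\lesssim H\|\nabla v\|_{\omega_T^1}$ by Poincar\'e, and $\|\mathcal{I}_H(v-c)\|_T\le\sum_x|(P_x(v-c))(x)|\,\|\varphi_x\|_T\lesssim H^{d/2}\!\cdot\!H^{-d/2}\|v-c\|_{\omega_T^1}\lesssim H\|\nabla v\|_{\omega_T^1}$ using $\|\varphi_x\|_T\lesssim H^{d/2}$ and the nodal bound, so $\|H^{-1}(v-\mathcal{I}_Hv)\|_T\lesssim\|\nabla v\|_{\omega_T^1}$. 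Adding the two estimates proves the lemma.

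I expect the only genuinely delicate point to be the role of the word \emph{interior}: for an element whose patch meets $\partial\Omega$ the sums defining $\mathcal{I}_H$ and the local spaces of the $P_x$ run only over $\mathcal{N}_I$, so the partition of unity over $\omega_x^0$ or $\omega_T^1$ need not be complete, constants are no longer reproduced, and the subtraction tricks above break down — which is exactly why the corrected basis is needed near the boundary. Everything else is routine scaling/Bramble--Hilbert bookkeeping; I would also note in passing that each $P_x$ is trivially well defined, being the $L^2$-orthogonal projection onto a finite-dimensional subspace, and that shape regularity of $\mathcal{T}_H$ keeps all hidden constants uniform in $H$.
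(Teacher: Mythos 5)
Your proposal is correct and coincides in substance with the paper's proof, which simply invokes the standard Cl\'ement argument of \cite{BrPe14} together with the observation that $\sum_{x\in\N_I}\varphi_x$ is a complete partition of unity on interior elements --- exactly the mechanism (constant reproduction by $P_x$, nodal inverse estimate on $T$, scaling of $\varphi_x$ and $\nabla\varphi_x$, Poincar\'e on the patch) that you write out in detail. One caveat you share with the paper: for an interior $T$ the patches $\omega_x^0$ and $\omega_T^1$ may still contain cut elements, so the Poincar\'e constants there are of moderate size not by shape regularity of $\T_H$ alone but in the sense of the Poincar\'e--Friedrichs discussion of Section~\ref{sec:PF}.
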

\begin{proof}
  It follows directly from the standard proof \cite{BrPe14} since
  $\sum_{i\in\N}\varphi_i$ is a partition of unity on interior elements.
\end{proof}

The trace of a function $v\in\V^f$ is ``small'' since the function $v$
is in the kernel of an averaging operator,
\begin{equation}
  \V^f =\{v\in \VGammah \mid \mathcal{I}_H v = 0\}.
\end{equation}
We formulate this more precisely in the following Lemma.

\begin{lemma}\label{lem:trace}
  Given an interior element $T\in\T_H$ let $\gamma\subset\partial T$
  be one of its faces. Then
  \begin{equation}
    \|v\|_{\gamma} \lesssim H^{1/2} \|\nabla v \|_{\omega_T^1},
  \end{equation}
  holds for all $v\in\V^f$.
\end{lemma}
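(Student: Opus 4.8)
The plan is to combine a standard trace inequality on the element $T$ with the Poincaré--Friedrichs inequality on the patch $\omega_T^1$, exploiting that $v\in\V^f$ means $\mathcal{I}_H v = 0$, hence in particular $(P_x v)(x) = 0$ for the nodes $x$ associated with $T$. First I would apply the scaled trace inequality $\|v\|_{\gamma}^2 \lesssim H^{-1}\|v\|_T^2 + H\|\nabla v\|_T^2$, valid on a shape-regular element of diameter $\sim H$. This reduces the claim to showing $\|v\|_T \lesssim H\|\nabla v\|_{\omega_T^1}$, i.e.\ a Poincaré-type bound where the ``mean-zero'' information comes not from the mean of $v$ over $T$ but from the vanishing of the local averages $P_x v$ at the vertices of $T$.

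To get that bound I would use the estimate \eqref{eq:general_PF}: on $\omega_T^0$ (or a slightly enlarged patch), for a suitable $(d-1)$-manifold $\gamma'$ inside the patch, $\|v\|_{\omega_T^0} \lesssim \CP\,\mathrm{diam}(\omega_T^0)\|\nabla v\|_{\omega_T^0} + \mathrm{diam}(\omega_T^0)^{1/2}\|v\|_{\gamma'}$. Because $T$ is interior, the earlier Poincaré--Friedrichs discussion gives $\CP\lesssim 1$ for these mesh-shaped patches. The remaining term $\|v\|_{\gamma'}$ must be absorbed: I would pick $\gamma'$ to be a face of a neighbouring element and iterate, or more directly invoke the defining property of $\mathcal{I}_H$. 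Since $P_x v$ is the $L^2(\omega_x^0)$-projection of $v$ onto the local finite element space and $(P_x v)(x)=0$ for $x$ a vertex of $T$, a standard argument (projection is stable, and a finite-element function vanishing at a node has controlled norm relative to its gradient plus lower-order data) yields $\|v - \bar v_{\omega_x^0}\|_{\omega_x^0} \lesssim \|P_x v - \bar v_{\omega_x^0}\|$-type control; combined with the projection's $L^2$-stability this shows the constant-mode part of $v$ on $\omega_T^1$ is controlled by $\|\nabla v\|_{\omega_T^1}$, giving $\|v\|_{\omega_T^1}\lesssim H\|\nabla v\|_{\omega_T^1}$.

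Assembling: $\|v\|_\gamma^2 \lesssim H^{-1}\|v\|_T^2 + H\|\nabla v\|_T^2 \lesssim H^{-1}(H\|\nabla v\|_{\omega_T^1})^2 + H\|\nabla v\|_T^2 \lesssim H\|\nabla v\|_{\omega_T^1}^2$, which is the claimed inequality after taking square roots. The one-ring patch $\omega_T^1$ enters precisely because the local projections $P_x$ at the vertices of $T$ see data on $\omega_x^0\subset\omega_T^1$.

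The main obstacle I expect is the step that turns ``$\mathcal{I}_H v = 0$'' into a genuine Poincaré-type bound with the correct power of $H$: one must carefully track that the projective Clément operator, though not built from the exact nodal values of $v$, still kills the constants on the relevant patches and is $L^2$-stable, so that the difference between $v$ and its local constant approximation on $\omega_T^1$ is controlled by $H\|\nabla v\|_{\omega_T^1}$ with a constant independent of how $\partial\Omega$ cuts the background elements. This is where the interior-element hypothesis (partition of unity intact, $\CP\lesssim1$) is used, and it is essentially a chaining of the local projection stability with the patchwise Poincaré--Friedrichs inequality; making the chaining clean without losing powers of $H$ or picking up $\CP$-dependence is the delicate point.
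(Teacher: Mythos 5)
Your proposal is correct and takes essentially the same route as the paper: a scaled trace inequality on $T$, $\|v\|_\gamma^2 \lesssim H^{-1}\|v\|_T^2 + H\|\nabla v\|_T^2$, combined with the estimate $\|v\|_T = \|v-\mathcal{I}_H v\|_T \lesssim H\|\nabla v\|_{\omega_T^1}$, which holds because $\mathcal{I}_H v=0$ and $T$ is interior. The only difference is that the paper obtains this middle bound by directly invoking the preceding lemma on the local approximation property of the projective Cl\'ement operator on interior elements, whereas you re-derive that property by hand from the Poincar\'e--Friedrichs inequality and the vanishing of the local averages $(P_x v)(x)$; the content is the same.
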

\begin{proof}
  For an interior element $T$ the standard approximation
  property of the Clem\'ent type interpolation operator holds, i.e.,
  \begin{equation}\label{eq:interiorestimate}
    \|v\|_{T} = \|v-\mathcal{I}_Hv\|_{T}\lesssim H\|\nabla v\|_{\omega_T^1}.
  \end{equation}
  since $v\in\V^f$. Using a trace inequality and \eqref{eq:interiorestimate} we obtain
  \begin{equation}
    \begin{aligned}
      \|v\|^2_{\gamma} &\lesssim H^{-1} \|v\|^2_{T} + H \|\nabla v\|^2_{T}\lesssim H \|\nabla v\|^2_{\omega_T^1},
    \end{aligned}
  \end{equation}
  where $T$ is an interior element.
\end{proof}

We will now make an assumption which is a sufficient condition to prove
the main results of the paper and which will also simplifies the analysis.
\begin{assumption}\label{ass:interior_element}
  All elements in $S\in\mathcal{T}_H$ share a vertex with an {\em
    interior element}, i.e., an element $T\in\T_H$ such that
  $T\cap\Omega=T$.
\end{assumption}
\begin{figure}[htb]
  \centering
  \includegraphics[width=0.6\textwidth]{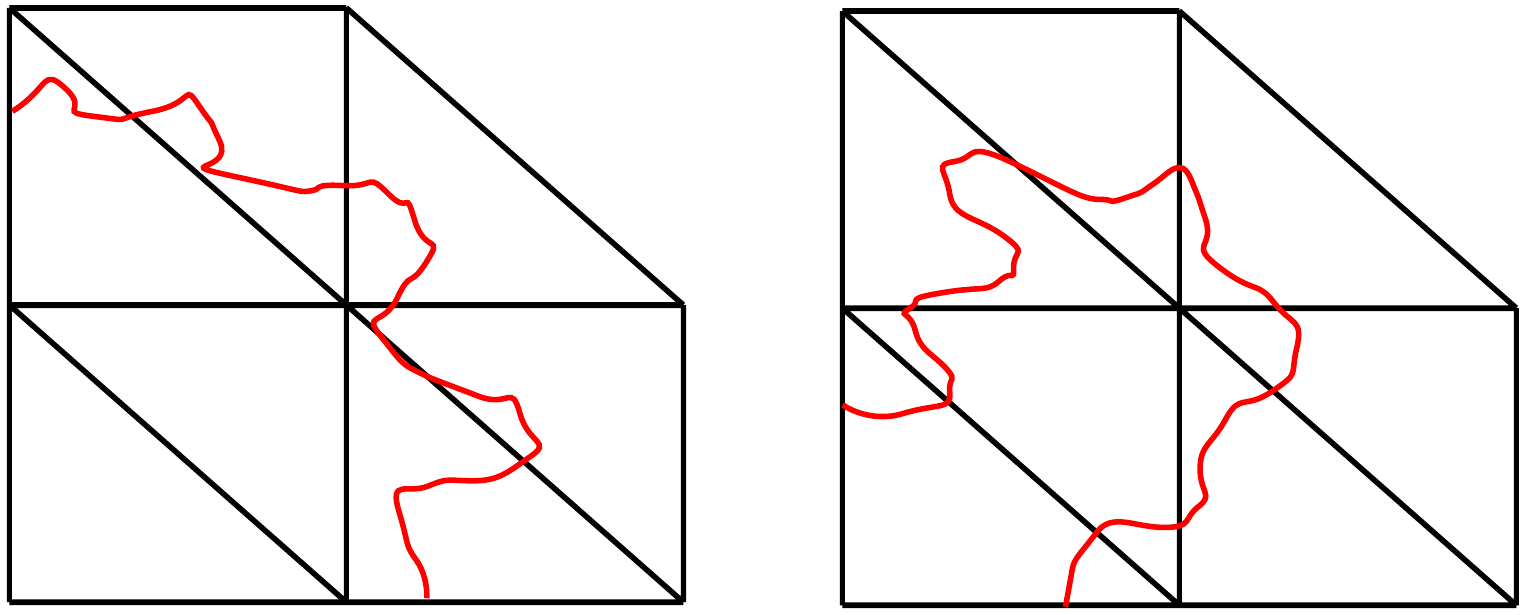}
  \caption{Admissible (left) and non-admissible (right) mesh according
    to Assumption~\ref{ass:interior_element}. The irregular curve is where the
    elements are cut by the outer boundary.}
  \label{fig:cut_elements}
\end{figure}
\begin{lemma}\label{lem:interiorestimate}
  Let $T$ be an element that is cut by the boundary
  $\partial\Omega$. Under Assumption~\ref{ass:interior_element} the
  following Poincar\'e-Friedrichs type inequality holds
  \begin{equation}
    \|v\|_{T}\lesssim  H\|\nabla v\|_{\omega^2_{T}}\quad\text{for all }v\in\V^f.
  \end{equation}
\end{lemma}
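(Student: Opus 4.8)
The plan is to reduce the bound for the cut element $T$ to the interior-element estimates already available (in particular the trace bound of Lemma~\ref{lem:trace}) by passing through a nearby interior element and applying a Poincar\'e--Friedrichs inequality with a boundary average on the small region that connects them.

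By Assumption~\ref{ass:interior_element} there is an interior element $T^\ast\in\T_H$ (so $T^\ast\cap\Omega=T^\ast$ and $\overline{T^\ast}\subset\Omega$) sharing a vertex $z$ with $T$. Since $\overline{T^\ast}\subset\Omega$ the node $z$ lies in the interior of $\Omega$, hence $z\in\overline{T\cap\Omega}$ and a full mesh neighbourhood of $z$ sits inside $\Omega$. Let $\omega=\text{int}\big(\bigcup_{S\in\T_H,\ z\in\overline S}\overline S\big)\cap\Omega$ be the clipped nodal star of $z$. Then $T\cap\Omega\subset\omega$, $T^\ast\subset\omega$, $\dia(\omega)\sim H$, and $\omega\subset\omega_T^1$ because every element of the star touches $T$. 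Being interior and a member of the star, $T^\ast$ has the facet $\gamma\subset\partial T^\ast$ opposite to $z$, which lies on $\partial\omega$, lies inside $\Omega$, and satisfies $|\gamma|\sim H^{d-1}\sim\dia(\omega)^{d-1}$.

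Now apply \eqref{eq:general_PF} on $\omega$ with this $\gamma$; the average is taken over a manifold of size comparable to $\dia(\omega)^{d-1}$, so its hypothesis holds and
\[
  \|v\|_{T}\le\|v\|_{\omega}\lesssim\CP(\omega)\,\dia(\omega)\,\|\nabla v\|_{\omega}+\dia(\omega)^{1/2}\,\|v\|_{\gamma}\qquad\text{for all }v\in\V^f.
\]
For the trace term, apply Lemma~\ref{lem:trace} to the \emph{interior} element $T^\ast$ with its facet $\gamma$, obtaining $\|v\|_{\gamma}\lesssim H^{1/2}\|\nabla v\|_{\omega_{T^\ast}^1}$. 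Since $T^\ast\subset\omega_T^1$ we have $\omega_{T^\ast}^1\subset\omega_T^2$, and also $\omega\subset\omega_T^1\subset\omega_T^2$; therefore both terms are controlled by $\|\nabla v\|_{\omega_T^2}$ and $\|v\|_T\lesssim\big(\CP(\omega)+1\big)H\,\|\nabla v\|_{\omega_T^2}$.

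The remaining --- and main --- point is that $\CP(\omega)$ is bounded by a constant independent of $H$, $h$ and of how $\partial\Omega$ cuts the star. Here $\omega$ is a patch of diameter $\sim H$ part of whose boundary is the fine-scale boundary $\partial\Omega$, which is precisely the setting of Section~\ref{sec:PF}: applying Lemma~\ref{lem:PF} to the quasi-uniform fine triangulation of $\omega$ with $\gamma$ the facet of $T^\ast$, the longest path $s_{\mathrm{max}}$ and the maximal overlap $r_{\mathrm{max}}$ are estimated exactly as in the fractal and saw-tooth examples, giving $\CP(\omega)^2\lesssim 1$ (and this is immediate when the boundary inside the patch is, e.g., uniformly Lipschitz). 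A minor technicality is that a clipped element, or $\omega$ itself, could be disconnected; one then argues on the connected component of $\omega$ containing $z$, or component by component. Combining these estimates yields the claimed bound.
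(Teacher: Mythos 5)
Your argument is essentially the paper's proof: pass to the clipped nodal star of the vertex shared with an interior element, apply \eqref{eq:general_PF} there with $\gamma$ a face of that interior element, control the trace term by Lemma~\ref{lem:trace}, and conclude via the inclusions $\omega\subset\omega^1_{T^\ast}\subset\omega^2_T$ (the paper chooses $\gamma$ as an edge of the interior element through the shared vertex rather than the opposite facet, which is immaterial). Your explicit remark that $\CP(\omega)$ must be bounded independently of how $\partial\Omega$ cuts the star is a caveat the paper leaves implicit in its use of $\lesssim$, so the proposal is correct and consistent with the paper's argument.
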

\begin{proof}
  Let $\widetilde T$ be an element which share the vertex $x$ with
  $T$. Using \eqref{eq:general_PF} we have that
  \begin{equation}
    \begin{aligned}
      \|v\|_T \leq \|v\|_{\omega_x^0}\lesssim H\|\nabla v\|_{\omega_x^0} + H^{1/2}\|v\|_{\gamma} \lesssim H\|\nabla v\|_{\omega^1_{\widetilde T}}\lesssim H\|\nabla v\|_{\omega^2_{ T}},
    \end{aligned}
  \end{equation}
  holds, since $\text{diam}(\omega^0_{x})\lesssim H$,
  $\text{diam}(\omega^1_{\widetilde T})\lesssim H$ and $\omega_x^0\subset
  \omega^1_{\widetilde T}\subset \omega^2_{T}$ choosing $\overline\gamma$ as an
  edge of $\widetilde T$ which contains the vertex $x$.
\end{proof}

Next we prove local approximation and stability properties
for functions which are in some sense close to but outside $\V^f$.
\begin{lemma}\label{lem:approxIH}
  Let $\mathcal{I}_H:L^2(\Omega_H)\to\V_H$ be the Cl\'ement
  interpolation operator defined by \eqref{eq:clement}. If $v=\eta w$,
  where $\eta$ and $w$ satisfies $0\leq \eta\leq 1$,
  $\|\nabla\eta\|_{L^{\infty}}\lesssim H^{-1}$, and $w\in\V^f$, then
  the following estimate holds
  \begin{equation}
    \|H^{-1}(v-\mathcal{I}_Hv) \|_{T} + \|\nabla \mathcal{I}_Hv\|_{T} \lesssim \|\nabla w\|_{\omega_T^2},
  \end{equation}
  for all $T\in\T_H$.
\end{lemma}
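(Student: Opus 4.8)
The plan is to reduce the estimate to the local approximation and stability properties of the Cl\'ement operator that were already established for genuine kernel functions $w \in \V^f$, by carefully accounting for the error introduced by multiplication with the cutoff $\eta$. The starting point is the observation that $\mathcal I_H$ is linear and local: on an element $T$, the value $(\mathcal I_H v)|_T$ depends only on the local projections $P_x v$ for the (finitely many, $O(1)$ by shape regularity) nodes $x$ touching $T$, and each $P_x v$ depends only on $v|_{\omega_x^0}$. So everything happening on $T$ is controlled by data on $\omega_T^1$, and ultimately — after one more layer coming from the stability estimate for $w$ — on $\omega_T^2$.

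First I would treat the interior-element case, where the earlier lemma gives $\|H^{-1}(w-\mathcal I_Hw)\|_T + \|\nabla \mathcal I_Hw\|_T \lesssim \|\nabla w\|_{\omega_T^1}$ directly for $w\in\V^f$. Write $v = \eta w$ and split $v - \mathcal I_H v = \eta(w - \mathcal I_H w) + (\eta\mathcal I_H w - \mathcal I_H(\eta w))$. The first term is bounded in $L^2(T)$ by $\|\eta\|_{L^\infty}\|w-\mathcal I_Hw\|_T \lesssim H\|\nabla w\|_{\omega_T^1}$ using $0\le\eta\le1$. For the second, commutator-type term I would use a Bramble--Hilbert / scaling argument node by node: on each $\omega_x^0$, subtract a constant $c_x$ (the $\gamma$-average of $w$, say), use that $\eta$ varies by at most $O(1)$ across $\omega_x^0$ since $\|\nabla\eta\|_\infty\lesssim H^{-1}$ and $\dia(\omega_x^0)\lesssim H$, so that $\eta w - \eta(x) w$ and the interpolation error of $(\eta-\eta(x))w$ are both controlled by $\|w - c_x\|_{\omega_x^0} + H\|\nabla w\|_{\omega_x^0}$, which by the Poincar\'e--Friedrichs inequality (and, for cut patches, by Lemma~\ref{lem:interiorestimate} together with the trace bound Lemma~\ref{lem:trace}, both valid since $w\in\V^f$) is $\lesssim H\|\nabla w\|_{\omega_T^2}$. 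Summing the $O(1)$ nodal contributions and adding the trivial bound $\|\nabla\mathcal I_H v\|_T \lesssim \|\nabla\mathcal I_H w\|_T + \|\nabla(\mathcal I_H v - \eta \mathcal I_H w)\|_T + \|\nabla\eta\|_\infty\|\mathcal I_H w\|_T$ (inverse inequality on the coarse mesh for the middle term, and $\|\mathcal I_H w\|_T \lesssim \|w\|_{\omega_T^1}\lesssim H\|\nabla w\|_{\omega_T^2}$) closes the gradient part.

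Second, for elements $T$ cut by the boundary I cannot use the interior-element lemma, but Assumption~\ref{ass:interior_element} guarantees $T$ shares a vertex with an interior element, so Lemma~\ref{lem:interiorestimate} supplies $\|w\|_T \lesssim H\|\nabla w\|_{\omega_T^2}$ and Lemma~\ref{lem:trace} supplies the face bound; these are exactly the ingredients the nodal scaling argument needs, so the same decomposition goes through with $\omega_T^1$ replaced by $\omega_T^2$ throughout. The bookkeeping of which patch ($\omega_x^0 \subset \omega_T^1 \subset \omega_T^2$) each term lives on, and verifying that no term requires more than two layers, is the only place care is needed.

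The main obstacle will be the commutator term $\eta\mathcal I_H w - \mathcal I_H(\eta w)$: unlike a nodal interpolant, the Cl\'ement operator does not commute with multiplication by smooth functions even approximately in an obvious way, so one must genuinely exploit the projection property of $P_x$ and the slow ($O(1)$ over a patch) variation of $\eta$, subtracting the right constant on each patch and invoking Poincar\'e--Friedrichs for $w\in\V^f$. Everything else is standard scaling, the triangle inequality, and finite overlap of the patches.
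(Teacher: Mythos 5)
There is a genuine gap, and it sits exactly where you relegate the work to ``bookkeeping'': the behaviour of $\mathcal{I}_H$ on elements cut by the boundary. On interior elements your argument is fine but heavier than necessary: since $\mathcal{I}_H$ is a projection and $w\in\V^f=(1-\mathcal{I}_H)\VGammah$, you have $\mathcal{I}_Hw=0$, so your ``commutator'' $\eta\mathcal{I}_Hw-\mathcal{I}_H(\eta w)$ is simply $-\mathcal{I}_Hv$ and the first term of your splitting is $v$ itself; no Bramble--Hilbert or constant-subtraction device is needed. The paper just uses the product rule, $\|\nabla(\eta w)\|_{T}\lesssim H^{-1}\|w\|_{T}+\|\nabla w\|_{T}\lesssim\|\nabla w\|_{\omega_T^1}$ (the last step by the kernel property of $w$), and then applies the standard interior Cl\'ement estimates to $v$. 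So the obstacle you single out as the main one essentially evaporates.

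The real difficulty is the one you assert away when you claim ``the same decomposition goes through with $\omega_T^1$ replaced by $\omega_T^2$.'' On a cut element $T$, every bound you need for the interpolant itself --- $\|\mathcal{I}_Hv\|_{T}\lesssim\|v\|_{\omega_T^1}$, the inverse inequality $\|\nabla\mathcal{I}_Hv\|_{T}\lesssim H^{-1}\|\mathcal{I}_Hv\|_{T}$, your ``nodal scaling argument'' --- requires control of the nodal values $(P_xv)(x)$ through inverse-type estimates on $T\cap\Omega$ and on the cut patch $\omega_x^0$, and these are \emph{not} uniform in how the boundary cuts the mesh: if only a sliver of the patch lies inside $\Omega$, the $L^\infty$--$L^2$ inverse estimate on $\omega_x^0$ degenerates. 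Lemmas~\ref{lem:trace} and \ref{lem:interiorestimate} do not supply this; they control $w$, not $\mathcal{I}_Hv$. This is precisely what Assumption~\ref{ass:interior_element} is used for in the paper's proof: the nodal value is measured on an adjacent \emph{interior} element $\widetilde T\subset\omega_x^0$ sharing the vertex $x$, via $\|(P_xv)(x)\|_{T}\lesssim |(P_xv)(x)|\,\|1\|_{\widetilde T}\lesssim H^{d/2}\|P_xv\|_{L^\infty(\widetilde T)}\lesssim\|P_xv\|_{\widetilde T}\leq\|v\|_{\omega_x^0}$, so that the inverse estimate is only ever invoked on an uncut element. With that stability in hand one concludes $\|v-\mathcal{I}_Hv\|_{T}\lesssim\|v\|_{\omega_T^1}\leq\|w\|_{\omega_T^1}\lesssim H\|\nabla w\|_{\omega_T^2}$ and $\|\nabla\mathcal{I}_Hv\|_{T}\lesssim\sum_{x\in\mathcal{N}_T}H^{-1}\|(P_xv)(x)\|_{T}\lesssim\|\nabla w\|_{\omega_T^2}$. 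Your proposal invokes the assumption only to bound $\|w\|_{T}$, so the decisive mechanism for the cut elements is missing.
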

\begin{proof}
  The local approximation and stability properties for an interior
  element follows directly from Lemma~\ref{lem:interiorestimate}
  together with
  \begin{equation}
    \|\nabla \eta w\|_{T} \lesssim  H^{-1}\| w\|_{T} + \|\nabla w\|_{T} \lesssim \|\nabla w\|_{\omega_T^1}.
  \end{equation}
  Next we investigate the local approximation and stability properties
  for elements on the boundary. Let $T$ be an element cut by the
  boundary and $\widetilde T$ an interior element sharing vertex $x$
  with T. Then the $L^2$-stability follows directly from the stability
  of the interior elements, i.e,
  \begin{equation}
    \begin{aligned}
    \|( P_xv)(x)\|_{T} &= |(P_xv)(x)|\frac{\|1\|_{T}}{\|1\|_{\widetilde T}}\|1\|_{\widetilde T} \lesssim |(P_xv)(x)|\|1\|_{\widetilde T}=\|(P_xv)(x)\|_{\widetilde T} \\
    & \lesssim H^{d/2}\|(P_xv)(x)\|_{L^\infty(\widetilde T)} \leq H^{d/2}\|P_xv\|_{L^\infty(\widetilde T)} \lesssim \|P_xv\|_{\widetilde T} \\
    & \leq \|P_xv\|_{\omega^0_x}\leq \|v\|_{\omega^0_x}\leq \|v\|_{\omega^1_T}.
    \end{aligned}
  \end{equation}
  We obtain
  \begin{equation}
    \begin{aligned}
      \|v-\mathcal{I}_Hv\|_{T} &\lesssim
      \|v\|_{\omega^1_T}\lesssim \|w\|_{\omega^1_T} \lesssim H\|\nabla
      w\|_{\omega^2_T},
    \end{aligned}
  \end{equation}
  since $w\in\V^f$ using Lemma~\ref{lem:trace}, $L^2$-stability of the
  interpolation operator, and $w\in\V^f$. Similar argument yield
  \begin{equation}
    \begin{aligned}
      \|\nabla \mathcal{I}_H v\|_{T}&\lesssim \sum_{x\in\mathcal{N}_T}
      H^{-1}\|(P_xv)(x)\|_{T} \lesssim \|\nabla w
      \|_{\omega_T^2},
    \end{aligned}
  \end{equation}
  where $\mathcal{N}_T$ is all vertices in element $T$.
\end{proof}

\subsection{Estimation of the error in the reference finite element solution}
\label{sec:estaimte_uref}

We bound the error in the reference finite element solution $\uref\in\VGammah$.
\begin{lemma}\label{lemma:H1enrichment}
  Let $u\in \V$ and $\uref\in\VGammah$ be the solutions to
  \eqref{eq:week} and \eqref{eq:omegagamma} respectively. Then
  \begin{equation}
    |||u-\uref||| \lesssim  \inf_{v_H\in \V_H}\left(\|H^{-1}(u-v_H)\|_{\Omega\setminus \omega_{\Gamma}^{k-1}}+|||u - v_H|||_{\Omega\setminus \omega_{\Gamma}^{k-1}}\right) + \inf_{v_h\in \V_h(\omega_{\Gamma}^k)}|||u-v_h|||_{\omega_{\Gamma}^{k}},
  \end{equation}
  holds.
\end{lemma}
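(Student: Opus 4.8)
The plan is to use Galerkin orthogonality to reduce to a best‑approximation problem in $\VGammah$, and then to build a single good competitor by gluing a coarse approximation of $u$ (valid away from the boundary) to a fine approximation of $u$ (valid near it) through a cut‑off of width $\sim H$. Since $|||w|||^2=a(w,w)$, $a$ is symmetric, $|||\cdot|||$ is a norm on $\V$ under the standing assumptions, and $\VGammah\subset\V$, the orthogonality $a(u-\uref,v)=0$ for all $v\in\VGammah$ coming from \eqref{eq:week} and \eqref{eq:omegagamma} gives the energy‑projection identity $|||u-\uref|||=\min_{v\in\VGammah}|||u-v|||$. Thus it suffices to exhibit one $v\in\VGammah$ with $|||u-v|||$ bounded by the right‑hand side. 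I would fix an arbitrary $v_H\in\V_H$, let $v_h\in\V_h(\omega_{\Gamma}^k)$ be a Dirichlet‑data preserving quasi‑interpolant of $u$ on $\omega_{\Gamma}^k$ (so $v_h|_{\Gamma_D}=0$, $v_h$ has the usual local approximation and $H^1$‑stability properties, and $\V_H|_{\omega_{\Gamma}^k}\subset\V_h(\omega_{\Gamma}^k)$), pick a fine‑mesh cut‑off $\eta\in\V_h(\omega_{\Gamma}^k)$ with $0\le\eta\le1$, $\eta\equiv1$ on $\omega_{\Gamma}^{k-1}$, $\eta\equiv0$ near $\partial\omega_{\Gamma}^k\setminus\partial\Omega$, and $\|\nabla\eta\|_{L^{\infty}}\lesssim H^{-1}$ (possible since the collar $\omega_{\Gamma}^k\setminus\omega_{\Gamma}^{k-1}$ has width of order $H$), and set
\begin{equation*}
  v:=v_H+\mathcal{I}_h\big(\eta\,(v_h-v_H)\big),
\end{equation*}
with $\mathcal{I}_h$ the fine nodal interpolant on $\omega_{\Gamma}^k$, needed only because the product $g:=\eta(v_h-v_H)$ is merely piecewise quadratic on $\T_h$. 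One checks that $v\in\V_H+\V_h(\omega_{\Gamma}^k)$, that the fine correction vanishes on $\partial\omega_{\Gamma}^k\setminus\partial\Omega$ (since $\eta$ does), and that $v=v_h$ on $\omega_{\Gamma}^{k-1}$ (there $g=v_h-v_H\in\V_h$ is reproduced by $\mathcal{I}_h$), in particular $v|_{\Gamma_D}=0$, so $v\in\VGammah$, while $v=v_H$ on $\Omega\setminus\omega_{\Gamma}^k$.

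For the estimate I would use the identity
\begin{equation*}
  u-v=(1-\eta)(u-v_H)+\eta\,(u-v_h)-(\mathcal{I}_hg-g),
\end{equation*}
and split $\Omega$ into $\Omega\setminus\omega_{\Gamma}^k$, the collar $R:=\omega_{\Gamma}^k\setminus\omega_{\Gamma}^{k-1}$, and $\omega_{\Gamma}^{k-1}$ (the last carrying all of $\Gamma_R$, since the cut elements lie in $\omega_{\Gamma}^0\subset\omega_{\Gamma}^{k-1}$). On $\Omega\setminus\omega_{\Gamma}^k$ one has $u-v=u-v_H$, giving $|||u-v_H|||_{\Omega\setminus\omega_{\Gamma}^{k-1}}$; on $\omega_{\Gamma}^{k-1}$ one has $u-v=u-v_h$, and quasi‑optimality of $v_h$ (local $H^1$‑stability, plus a trace bound for the $\Gamma_R$‑term) gives $|||u-v_h|||_{\omega_{\Gamma}^{k-1}}\lesssim\inf_{w_h\in\V_h(\omega_{\Gamma}^k)}|||u-w_h|||_{\omega_{\Gamma}^{k}}$. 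On $R$ the three terms are controlled by: $\|\nabla(u-v_H)\|_R+\|(\nabla\eta)(u-v_H)\|_R\lesssim|||u-v_H|||_{\Omega\setminus\omega_{\Gamma}^{k-1}}+\|H^{-1}(u-v_H)\|_{\Omega\setminus\omega_{\Gamma}^{k-1}}$, which produces the first right‑hand‑side summand; $\|\nabla(u-v_h)\|_R+\|(\nabla\eta)(u-v_h)\|_R\lesssim|||u-v_h|||_{\omega_{\Gamma}^k}+H^{-1}\|u-v_h\|_R$, where the last term is absorbed using $h\le H$ on $\omega_{\Gamma}^k$ and $\|u-v_h\|_R\lesssim h\,\|\nabla(u-w_h)\|_{\widetilde R}$ for every $w_h\in\V_h(\omega_{\Gamma}^k)$; and the purely elementwise nodal‑interpolation estimate $\|\nabla(\mathcal{I}_hg-g)\|_T\lesssim h_T\|D^2g\|_T\lesssim h_TH^{-1}\|\nabla(v_h-v_H)\|_T$ (second derivatives of $\eta$ and of the finite‑element functions vanishing inside each fine element), which sums to $\|\nabla(\mathcal{I}_hg-g)\|_R\lesssim\|\nabla(v_h-v_H)\|_R\lesssim|||u-v_H|||_{\Omega\setminus\omega_{\Gamma}^{k-1}}+|||u-v_h|||_{\omega_{\Gamma}^k}$. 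Collecting the three regional contributions, bounding $|||u-v_h|||_{\omega_{\Gamma}^k}$ by $\inf_{w_h}|||u-w_h|||_{\omega_{\Gamma}^k}$, and finally taking the infimum over $v_H\in\V_H$ yields the claim.

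The step I expect to be the main obstacle is the collar term, where one must absorb the spurious factor $H^{-1}$ produced by differentiating the cut‑off. Tested against $u-v_H$ this factor is precisely the first right‑hand‑side summand, but tested against $u-v_h$ it must be absorbed using that the fine mesh resolves $\omega_{\Gamma}^k$, so $h\le H$, together with the local approximation property of $v_h$ — which is why $v_h$ has to be a genuine quasi‑interpolant and not an arbitrary near‑minimiser of $|||u-\cdot|||_{\omega_{\Gamma}^k}$. The remaining points — checking that the assembled $v$ really lies in $\VGammah$ (correct behaviour on $\Gamma_D$ and across $\partial\omega_{\Gamma}^k\setminus\partial\Omega$) and keeping the error contributions of the three regions inside exactly $\Omega\setminus\omega_{\Gamma}^{k-1}$ and $\omega_{\Gamma}^{k}$ — are routine once $\eta$ is placed as above, thanks to the nodal interpolation error being elementwise.
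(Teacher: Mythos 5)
Your proposal is correct and takes essentially the same route as the paper: quasi-optimality in $\VGammah$ followed by the competitor $v_H+\pi_h\bigl(\eta(v_h-v_H)\bigr)$ with a width-$H$ cut-off and the same splitting into $\Omega\setminus\omega_\Gamma^k$, the collar, and $\omega_\Gamma^{k-1}$. The only (minor) deviations are in finishing the collar estimate: where you absorb $H^{-1}\|u-v_h\|$ by choosing $v_h$ as a Dirichlet-preserving quasi-interpolant and using $h\le H$, the paper keeps $v_h$ arbitrary and instead bounds the $v_h$-collar contribution by the corresponding $v_H$-terms (using $h<H$ and $\V_H\subset \V_h$ on $\omega_\Gamma^k$), folding it into the first infimum, and it invokes $H^1$-stability of the nodal interpolant on piecewise quadratics rather than your elementwise interpolation-error bound.
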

\begin{proof}
  We split $\Omega$ into the different parts
  $\Omega\setminus\omega_\Gamma^k$,
  $\omega_{\Gamma}^{k}\setminus\omega_{\Gamma}^{k-1}$, and
  $\omega_{\Gamma}^{k-1}$. Since $\VGammah\subset \V$, we have the best
approximation result
\begin{equation}\label{eq:best_approximation_referense}
  |||u - \uref ||| \lesssim |||u - w|||,\qquad\text{for all }w\in\VGammah.
\end{equation}
Let $\eta\in V_H$ be a cut off function, where
$\eta|_{\Omega\setminus\omega_\Gamma^k}=0$,
$\eta|_{\omega_\Gamma^{k-1}}=1$, and
$\|\nabla\eta\|_{L^{\infty}(T)}\lesssim H^{-1}$. We construct
$w=v_H + \pi_h\eta(v_h-v_H)\in \VGammah$ where $v_H\in \V_H$,
$v_h\in\V_h(\omega_\Gamma^k)$ , and $\pi_h$ is the nodal interpolant
onto the finite element space $\V_h$ and obtain
\begin{equation}
  \begin{aligned}
  |||u - w|||^2 = |||u - v_H|||^2_{\Omega\setminus\omega_\Gamma^k} + |||u - v_H - \pi_h\eta(v_h-v_H)|||^2_{\omega_\Gamma^k\setminus\omega_\Gamma^{k-1}}
  + |||u - v_h|||^2_{\omega_\Gamma^{k-1}} .
  \end{aligned}
\end{equation}
The first and third term are in the right form, see the statement of
Lemma~\ref{lemma:H1enrichment}. We turn to the second term. Using
the fact that the nodal interpolant $\pi_h$ is $H^1$-stable for finite
polynomial degrees (2 in our case) we obtain
\begin{equation}
  \begin{aligned}
    |||\pi_h\eta(v_h-v_H)|||_{\omega_\Gamma^k\setminus\omega_\Gamma^{k-1}}^2&\lesssim |||\eta(v_h- v_H)|||_{\omega_\Gamma^k\setminus\omega_\Gamma^{k-1}}^2 \\
    & = ||\nabla(\eta(v_h- v_H))||^2_{\omega_\Gamma^k\setminus\omega_\Gamma^{k-1}}+\kappa||\eta(v_h- v_H)||^2_{\partial(\omega_\Gamma^k\setminus\omega_\Gamma^{k-1})\cap\partial\Gamma_R}. \\
  \end{aligned}
\end{equation}
We have
\begin{equation}
  \begin{aligned}
    &||\nabla(\eta(v_h- v_H))||_{\omega_\Gamma^k\setminus\omega_\Gamma^{k-1}} \\
    &\qquad\leq ||(v_h- v_H)\nabla\eta||_{\omega_\Gamma^k\setminus\omega_\Gamma^{k-1}}+||\eta\nabla(v_h- v_H)||_{\omega_\Gamma^k\setminus\omega_\Gamma^{k-1}} \\
    &\qquad \lesssim H^{-1}||v_h- v_H||_{\omega_\Gamma^k\setminus\omega_\Gamma^{k-1}}+||\nabla(v_h- v_H)||_{\omega_\Gamma^k\setminus\omega_\Gamma^{k-1}} \\
    &\qquad \lesssim H^{-1}||v_h-u+u- v_H||_{\omega_\Gamma^k\setminus\omega_\Gamma^{k-1}}+||\nabla(v_h-u+u-v_H)||_{\omega_\Gamma^k\setminus\omega_\Gamma^{k-1}} \\
    &\qquad \lesssim H^{-1}||u- v_H||_{\omega_\Gamma^k\setminus\omega_\Gamma^{k-1}}+||\nabla(u-v_H)||_{\omega_\Gamma^k\setminus\omega_\Gamma^{k-1}} \\
    &\qquad\qquad + H^{-1}||u- v_h||_{\omega_\Gamma^k\setminus\omega_\Gamma^{k-1}}+||\nabla(u-v_h)||_{\omega_\Gamma^k\setminus\omega_\Gamma^{k-1}}. \\
  \end{aligned}
\end{equation}
Furthermore
\begin{equation}
  \begin{aligned}
    &\kappa||\eta(v_h- v_H)||^2_{\partial(\omega_\Gamma^k\setminus\omega_\Gamma^{k-1})\cap\partial\Omega} \lesssim \kappa||v_h - v_H||^2_{\partial(\omega_\Gamma^k\setminus\omega_\Gamma^{k-1})\cap\partial\Omega} \\
    &\qquad  \lesssim \kappa H^{-1}||v_h - v_H||^2_{\omega_\Gamma^k\setminus\omega_\Gamma^{k-1}}+H||\nabla(v_h - v_H)||^2_{\omega_\Gamma^k\setminus\omega_\Gamma^{k-1}}.
  \end{aligned}
\end{equation}
Taking the infimum and using that
\begin{equation}
  \begin{aligned}
    &\inf_{v_h\in\V_h}\left(H^{-1}||u- v_h||_{\omega_\Gamma^k\setminus\omega_\Gamma^{k-1}}+||\nabla(u-v_h)||_{\omega_\Gamma^k\setminus\omega_\Gamma^{k-1}}\right) \\
    &\qquad \leq \inf_{v_H\in\V_H} \left(H^{-1}||u-v_H||_{\omega_\Gamma^k\setminus\omega_\Gamma^{k-1}}+||\nabla(u-v_H)||_{\omega_\Gamma^k\setminus\omega_\Gamma^{k-1}}\right),
  \end{aligned}
\end{equation}
since $h < H$ and $\V_H\subset V_h$, concludes the proof.
\end{proof}

The analysis extends to a non-polygonal boundary if we assume that
$h$ is fine enough to approximate the boundary using interpolation
onto piecewise affine functions, see e.g. \cite{Sc75}.

\subsection{Estimation of the error in the global multiscale method}
In this section we present and analyze the method with
non-localized correctors.
\begin{lemma}\label{lemma:stepontheway}
  Let $\uref\in\VGammah$ solve \eqref{eq:omegagamma} and $\uH\in
\V_H^\Gamma$ solve \eqref{eq:idealgamma}, then
  \begin{equation}
    |||\uref-\uH||| \lesssim H\|f\|_{\omega_\Gamma^k}
  \end{equation}
  holds.
\end{lemma}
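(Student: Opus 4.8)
The plan is to exploit the $a$-orthogonal splitting $\VGammah = \V_H^\Gamma \oplus_a \V^f$ together with Galerkin orthogonality, exactly as in the standard LOD duality argument. First I would subtract \eqref{eq:idealgamma} from \eqref{eq:omegagamma}: since $\V_H^\Gamma \subset \VGammah$, this gives $a(\uref - \uH, v) = 0$ for all $v \in \V_H^\Gamma$. As $\uref - \uH \in \VGammah$ is $a$-orthogonal to $\V_H^\Gamma$, the decomposition forces the error to lie in the remainder space: $e^f := \uref - \uH \in \V^f$. In particular $e^f$ is supported in $\omega_\Gamma^k$.

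Next I would compute $|||e^f|||^2 = a(e^f,e^f) = a(\uref,e^f) - a(\uH,e^f)$. The orthogonality $a(\V_H^\Gamma,\V^f)=0$ together with $\uH\in\V_H^\Gamma$, $e^f\in\V^f$ kills the second term, while $e^f\in\V^f\subset\VGammah$ and \eqref{eq:omegagamma} give $a(\uref,e^f)=F(e^f)$. Hence $|||e^f|||^2 = F(e^f) = \int_\Omega f\,e^f\dx$, which by the support property of $\V^f$ equals $\int_{\omega_\Gamma^k} f\,e^f\dx$, so Cauchy--Schwarz yields $|||e^f|||^2 \le \|f\|_{\omega_\Gamma^k}\,\|e^f\|_{\omega_\Gamma^k}$.

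It then remains to absorb $\|e^f\|_{\omega_\Gamma^k}$ by $H\,|||e^f|||$. I would argue element by element: for an interior element $T$ we have $\mathcal{I}_H e^f = 0$, so \eqref{eq:interiorestimate} gives $\|e^f\|_T \lesssim H\|\nabla e^f\|_{\omega_T^1}$; for an element $T$ cut by $\partial\Omega$, Lemma~\ref{lem:interiorestimate} gives $\|e^f\|_T \lesssim H\|\nabla e^f\|_{\omega_T^2}$. Squaring, summing over the elements meeting $\omega_\Gamma^k$, and invoking the finite overlap of the two-layer patches $\{\omega_T^2\}$ produces $\|e^f\|_{\omega_\Gamma^k}^2 \lesssim H^2\|\nabla e^f\|_\Omega^2 \le H^2|||e^f|||^2$. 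Substituting into the inequality of the previous paragraph and cancelling one factor of $|||e^f|||$ gives $|||\uref - \uH||| = |||e^f||| \lesssim H\|f\|_{\omega_\Gamma^k}$.

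All of these steps are short. The only point requiring genuine care is the first one: verifying that the Galerkin difference $\uref - \uH$ really lands in $\V^f$, since everything afterwards — the confinement of the support to $\omega_\Gamma^k$, the cancellation of $a(\uH,e^f)$, and the applicability of the Poincar\'e-type bounds of Lemma~\ref{lem:interiorestimate} and \eqref{eq:interiorestimate} — hinges on this. Note also that Assumption~\ref{ass:interior_element} enters only indirectly, through Lemma~\ref{lem:interiorestimate}.
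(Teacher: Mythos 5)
Your proposal is correct and follows essentially the same route as the paper: the $a$-orthogonal splitting $\VGammah=\V^\Gamma_H\oplus_a\V^f$ identifies $\uref-\uH$ as the fine-scale component $u^{\mathrm{f}}\in\V^f$ supported in $\omega_\Gamma^k$, after which $|||u^{\mathrm{f}}|||^2=F(u^{\mathrm{f}})=(f,u^{\mathrm{f}}-\mathcal{I}_Hu^{\mathrm{f}})_{\omega_\Gamma^k}\lesssim H\|f\|_{\omega_\Gamma^k}|||u^{\mathrm{f}}|||$. Your explicit element-wise justification of $\|u^{\mathrm{f}}\|_{\omega_\Gamma^k}\lesssim H|||u^{\mathrm{f}}|||$ (interior elements via the Cl\'ement estimate, cut elements via Lemma~\ref{lem:interiorestimate} under Assumption~\ref{ass:interior_element}) only spells out a step the paper leaves implicit.
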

\begin{proof}
  Any $\uref\in\VGammah$ can be uniquely written as
  $\uref=\uH + u^\mathrm{f}$ where $\uH\in
  \V^\Gamma_H$ and $u^\mathrm{f}\in \V^\mathrm{f}$. This follows from
  the result from functional analysis, that if we have a
  projection $\mathcal{P}:\uref\to \uH$ onto a closed
  subspace, there is a unique split $\uref=\mathcal{P}\uref +
  (1-\mathcal{P})\uref$. For $\mathcal{P}=(1+Q)\mathcal{I}_H$, we
  have $\mathcal{P}\VGammah=\V^\Gamma_H$ and
  \begin{equation}
    \mathcal{P}^2=(1+Q)\mathcal{I}_H(1+Q)\mathcal{I}_H=(1+Q)\mathcal{I}_H\mathcal{I}_H=(1+Q)\mathcal{I}_H=\mathcal{P}.
  \end{equation}
  We obtain
  \begin{equation}
    |||u^\mathrm{f}|||^2 = a(u^\mathrm{f},u^\mathrm{f}) = (f,u^\mathrm{f})_{L^2(\Omega)} = (f,u^\mathrm{f}-\mathcal{I}_Hu^\mathrm{f})_{L^2(\omega_\Gamma^k)} \lesssim H\|f\|_{\omega_\Gamma^k}|||u^\mathrm{f}|||,
  \end{equation}
  which concludes the proof.
\end{proof}

\subsection{Estimation of the error between global and localized correction}
The correctors fulfill the following decay property.
\begin{lemma}[Decay of correctors]\label{lem:decay}
  For any $x\in\N_I$ there exist a $0<\gamma<1$ such that the local
  corrector $Q^L_x(\varphi_x)\in \V^\mathrm{f}_L(\varphi_x)$ and the
  global corrector $Q(\varphi_x)\in\V^\mathrm{f}(\varphi_x)$, which
  solves \eqref{eq:local_corrector} and \eqref{eq:global_corrector}
  respectively, fulfills the decay property
  \begin{equation}
    |||(Q - Q^L)(v_H)|||^2 \leq L^{d}\gamma^{\left\lfloor (L-3)/3 \right\rfloor}\sum_{x\in\mathcal{N}_I} v_x^2|||Q \phi_x|||^2,
  \end{equation}
  where $\lfloor\cdot\rfloor$ is the floor function which maps a real number to the largest smaller integer.
\end{lemma}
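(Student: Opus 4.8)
The plan is to follow the now-standard LOD localization argument (as in \cite{MaPe14,Peterseim:2014,ElGeMaPe13}), adapted to the cut-domain setting where the fine space $\V^f$ only lives on $\omega_\Gamma^k$. The core is an exponential decay estimate for the single-basis-function corrector $Q(\varphi_x)$ away from $x$, which then gets assembled into the stated bound by quasi-orthogonality and a finite-overlap argument over the nodal patches.

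First I would prove the element-wise decay of $Q(\varphi_x)$. Fix $x\in\N_I$ and let $\omega_x^\ell$ be the $\ell$-layer nodal patch. Choose a cut-off function $\eta\in\V_H$ with $\eta=0$ on $\omega_x^{\ell-1}$, $\eta=1$ outside $\omega_x^{\ell}$, and $\|\nabla\eta\|_{L^\infty}\lesssim H^{-1}$. Writing $z:=Q(\varphi_x)$, one tests the corrector equation \eqref{eq:global_corrector} with $w=(1-\mathcal{I}_H)(\eta z)\in\V^f$; this is admissible since $w|_{\Gamma_D}=0$ whenever $x$ is far from the boundary (and for $x$ near the boundary the support issues are handled exactly as in the support remark after \eqref{eq:global_corrector}, using that $Q(\varphi_x)$ vanishes outside $\omega_\Gamma^k$). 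Since $\varphi_x$ has support in $\omega_x^0\subset\omega_x^{\ell-1}$, the right-hand side $a(\varphi_x,w)$ is zero, and using $a(z,\mathcal{I}_H(\eta z))=0$ by orthogonality one is left with estimating $a(z,\eta z)$ on the annulus $\omega_x^\ell\setminus\omega_x^{\ell-1}$. Commuting $\eta$ through the gradient, bounding $\|\nabla\eta\|_{L^\infty}$ by $H^{-1}$, and applying Lemma~\ref{lem:approxIH} (with the function $\eta z$, $z\in\V^f$) to control $\|\nabla\mathcal{I}_H(\eta z)\|$ by $\|\nabla z\|$ on a slightly enlarged annulus, one obtains $|||z|||^2_{\Omega\setminus\omega_x^\ell}\lesssim |||z|||^2_{\omega_x^{\ell}\setminus\omega_x^{\ell-3}}$, i.e. the energy outside a patch is controlled by the energy in a fixed-width ``collar'' just inside it. The shift by $3$ layers is what produces the $\lfloor(L-3)/3\rfloor$ in the exponent.

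Next I would iterate: writing $\theta_\ell:=|||z|||^2_{\Omega\setminus\omega_x^\ell}$, the previous step gives $\theta_\ell\le C(\theta_{\ell-3}-\theta_{\ell})$, hence $\theta_\ell\le \tfrac{C}{1+C}\theta_{\ell-3}$, and iterating $\lfloor(L-3)/3\rfloor$ times gives $|||Q(\varphi_x)|||^2_{\Omega\setminus\omega_x^L}\le \gamma^{\lfloor(L-3)/3\rfloor}|||Q(\varphi_x)|||^2$ with $\gamma=C/(1+C)\in(0,1)$. Then I would bound the localization error $|||Q(\varphi_x)-Q^L_x(\varphi_x)|||$ by this tail: the difference $d_x:=Q(\varphi_x)-Q^L_x(\varphi_x)$ satisfies a Galerkin-type orthogonality, and using once more a cut-off near $\partial\omega_x^L$ together with Lemma~\ref{lem:approxIH} one gets $|||d_x|||\lesssim \gamma^{\lfloor(L-3)/3\rfloor/2}|||Q(\varphi_x)|||$ (this is the standard argument that the localized corrector is a Galerkin approximation of the global one on the patch, so its error is bounded by the best approximation, which in turn is bounded by the cut-off of $Q(\varphi_x)$). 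Finally, for a general $v_H=\sum_x v_x\varphi_x$ I would write $(Q-Q^L)(v_H)=\sum_x v_x\,d_x$, square the energy norm, and use that each $d_x$ is supported in $\omega_x^L$ so that at most $O(L^d)$ of the summands overlap at any point; Cauchy--Schwarz over these $O(L^d)$ overlapping terms yields $|||(Q-Q^L)(v_H)|||^2\lesssim L^d\sum_x v_x^2|||d_x|||^2\le L^d\gamma^{\lfloor(L-3)/3\rfloor}\sum_x v_x^2|||Q(\varphi_x)|||^2$, which is exactly the claimed inequality (with the constant absorbed, matching the paper's $\lesssim$-free statement by taking $\gamma$ slightly larger).

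The main obstacle I anticipate is the careful bookkeeping near the complex boundary $\partial\Omega$: the cut-off functions $\eta\in\V_H$ used in the decay argument must be chosen so that the test functions $(1-\mathcal{I}_H)(\eta z)$ genuinely lie in $\V^f$ with the correct (homogeneous) trace on $\Gamma_D$, and so that the ``interior element'' hypotheses of Lemma~\ref{lem:approxIH} and Lemma~\ref{lem:interiorestimate} are met on the annuli — this is where Assumption~\ref{ass:interior_element} and the fact that $\support(Q(\varphi_x))\subset\omega_\Gamma^k$ are essential, and it is the only place the argument differs materially from the classical LOD decay proof. Everything else (the iteration, the finite-overlap sum, the Galerkin-best-approximation step) is routine once the decay estimate on one annulus is in hand.
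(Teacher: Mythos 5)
Your first two steps (the annulus estimate and its iteration giving $|||Q\varphi_x|||^2_{\Omega\setminus\omega_x^{L-3}}\leq\gamma^{\lfloor (L-3)/3\rfloor}|||Q\varphi_x|||^2$, and the bound of the per-node localization error $d_x=Q(\varphi_x)-Q^L_x(\varphi_x)$ by this tail via a cut-off competitor in $\V^f(\omega_x^L)$) follow essentially the same route as the paper, including the boundary-correction functions $g_x$ needed so that the cut-off test functions stay in $\V^f$ with the right trace on $\Gamma_D$. The problem is your final assembly step. You claim that ``each $d_x$ is supported in $\omega_x^L$'' and invoke finite overlap of these patches to get the factor $L^d$. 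This is false: $Q^L_x(\varphi_x)$ is supported in $\omega_x^L$, but the global corrector $Q(\varphi_x)$ is not --- it is supported in all of $\omega_\Gamma^k$ and only \emph{decays} away from $x$. Hence the summands $v_x d_x$ in $(Q-Q^L)(v_H)=\sum_x v_x d_x$ overlap globally, the finite-overlap Cauchy--Schwarz argument does not apply, and a crude triangle inequality on the (small but nonzero) tails would replace $L^d$ by the total number of coarse nodes, i.e. an extra factor of order $H^{-d}$, which is not the stated bound.

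The paper closes exactly this gap differently: it writes $|||e|||^2=\sum_x a(e_x,e)$ with $e_x=v_x d_x$ and, for each $x$, subtracts a function $\tilde v_x=\eta_x^{L+2,L+1}e-\mathcal{I}_H(\eta_x^{L+2,L+1}e)+g_x$ satisfying $a(e_x,\tilde v_x)=0$ (it lies in $\V^f$ and its support misses both $\omega_x^0\supset\supp\varphi_x$ and $\omega_x^L\supset\supp Q_x^L(\varphi_x)$). Then $a(e_x,e)=a(e_x,e-\tilde v_x)\leq|||e_x|||\cdot|||e-\tilde v_x|||$ and, by the cut-off and interpolation estimates (Lemma~\ref{lem:approxIH}), $|||e-\tilde v_x|||\lesssim|||e|||_{\omega_x^{L+4}}$; it is the bounded overlap of the \emph{enlarged patches} $\omega_x^{L+4}$ --- each element lies in $O(L^d)$ of them --- applied to $\sum_x|||e_x|||\,|||e|||_{\omega_x^{L+4}}$, that produces the $L^d$ factor. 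So the localization of the \emph{test} side of the bilinear form, not of $d_x$ itself, is the missing idea; without it your argument does not yield the claimed inequality.
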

\begin{proof}
  See Appendix~\ref{app:proofs}
\end{proof}
The localized corrected basis functions fulfill the following
stability property.
\begin{lemma}\label{lem:g}
  Under Assumption~\ref{ass:interior_element} we have the stability
  \begin{equation}
    ||| \varphi_x + Q^L(\varphi_x)||| \lesssim H^{-1}|| \varphi_x ||,
  \end{equation}
  for the corrected basis function given any $x\in\N_I$.
\end{lemma}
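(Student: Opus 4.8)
The plan is to bound the energy norm of the corrected basis function $\varphi_x + Q^L(\varphi_x)$ by splitting it into the coarse part $\varphi_x$ and the local corrector $Q^L_x(\varphi_x)$, and then exploit the defining equation \eqref{eq:local_corrector} together with Lemma~\ref{lem:interiorestimate}. First I would note that $\varphi_x$ is a standard hat function, so $|||\varphi_x|||^2 \lesssim \|\nabla \varphi_x\|_{\Omega}^2 + \kappa\|\varphi_x\|^2_{\Gamma_R} \lesssim H^{d-2}$ and $\|\varphi_x\|^2 \simeq H^d$; hence $|||\varphi_x||| \lesssim H^{-1}\|\varphi_x\|$, which is exactly the target bound for the coarse contribution. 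It remains to control $|||Q^L_x(\varphi_x)|||$ by the same quantity.

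For the corrector part, I would test \eqref{eq:local_corrector} with $w = Q^L_x(\varphi_x) - z$ where $z$ is chosen in $\V^f$ so that $w$ has homogeneous trace on $\Gamma_D$ and vanishes outside $\omega_x^L$ (so that $w$ is admissible as a test function). A natural choice is to let $z$ be a fixed extension realizing the inhomogeneous boundary condition $z|_{\Gamma_D} = -\varphi_x$, supported in the one-layer patch around the boundary cut. Then coercivity gives
\begin{equation}
  |||Q^L_x(\varphi_x)|||^2 \lesssim |a(\varphi_x, w)| + |a(Q^L_x(\varphi_x), z)| + |||z|||^2,
\end{equation}
and using Cauchy--Schwarz together with the bound on $|||z|||$ (which is again of order $H^{-1}\|\varphi_x\|$, since $z$ is built from $\varphi_x$ on a region of width $\lesssim H$) absorbs the corrector norm into the left-hand side. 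The term $a(\varphi_x, w)$ is handled by Cauchy--Schwarz in energy norm, $|a(\varphi_x,w)| \le |||\varphi_x|||\,|||w|||$; the only subtlety is that $w$ need not itself satisfy a Poincaré inequality with a clean constant, but $w \in \V^f$ restricted to an interior-adjacent patch does, via Lemma~\ref{lem:interiorestimate}, so $|||w||| \lesssim |||Q^L_x(\varphi_x)||| + |||z|||$.

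The main obstacle will be handling the boundary layer carefully: the corrector satisfies an inhomogeneous Dirichlet condition on $\Gamma_D$ (to compensate for the nonconformity $\V_H \not\subset \VGammah$), so one cannot simply test with $Q^L_x(\varphi_x)$ itself. The technical work is to produce the auxiliary function $z \in \V^f$ with $z|_{\Gamma_D} = -\varphi_x$ supported in $\omega_\Gamma^0$ (or a fixed number of layers) and with $|||z||| \lesssim H^{-1}\|\varphi_x\|$; this uses that $h$ resolves the boundary and that $\varphi_x$ is bounded by $1$, so interpolating $-\varphi_x$ onto the fine mesh in the cut layer and subtracting its Clément average gives such a $z$, with the gradient bound coming from the mesh width $h \le H$ estimates and Assumption~\ref{ass:interior_element} ensuring the relevant patches touch an interior element where the Clément operator is well-behaved. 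Once $z$ is in hand, the rest is a short energy argument as sketched above, and combining the two contributions with the triangle inequality $|||\varphi_x + Q^L(\varphi_x)||| \le |||\varphi_x||| + |||Q^L_x(\varphi_x)|||$ yields the claim.
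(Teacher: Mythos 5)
Your skeleton is the same as the paper's: everything reduces to producing a lifting of the inhomogeneous boundary data, i.e.\ a function $z$ with $\mathcal{I}_H z=0$, $z|_{\Gamma_D}=-\varphi_x$, $\supp z\subset\omega_x^L$ and $|||z|||\lesssim H^{-1}\|\varphi_x\|$, after which one tests \eqref{eq:local_corrector} with the zero-trace difference and absorbs by Cauchy--Schwarz (the paper runs this absorption through the splitting $Q^L(\varphi_x)=Q_0\pm g_x$ and the orthogonality $a(\varphi_x+Q^L(\varphi_x),Q_0)=0$, which is the same mechanism). The genuine gap is that you treat the construction of $z$ as a routine technicality, while it is the entire content of the lemma, and the recipe you sketch does not deliver it. The difficulty is that $z$ must \emph{simultaneously} lie in the kernel of $\mathcal{I}_H$, carry the prescribed nonzero trace on $\Gamma_D$, and have energy only of order $H^{-1}\|\varphi_x\|$. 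Your proposal ``interpolate $-\varphi_x$ onto the fine mesh in the cut layer and subtract its Cl\'ement average'' secures the kernel property but destroys the trace: the subtracted coarse function $\mathcal{I}_H\tilde z$ has no reason to vanish on $\Gamma_D$, and with the natural lifting $\tilde z=-\varphi_x$ one even gets $z=\tilde z-\mathcal{I}_H\tilde z\equiv 0$, which has the wrong trace. If instead you protect the trace by cutting $-\varphi_x$ off at the fine scale near the cut boundary, the cut-off has gradient of order $h^{-1}$ on a strip of width $h$, so $|||z|||$ exceeds the target $H^{-1}\|\varphi_x\|\sim H^{(d-2)/2}$ by a factor of order $(H/h)^{1/2}$, which blows up as $h\to 0$. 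Neither branch of your sketch produces an admissible $z$, so the step ``once $z$ is in hand'' has nothing to stand on.

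The paper's proof spends essentially all of its effort on exactly this point. It takes the lifting to be $\varphi_x$ itself (the trace condition is then automatic, since the only nonconformity of $\V_H$ is the boundary condition) plus a discrete bubble $w$ supported on a neighbouring \emph{interior} coarse element $T$ --- this is where Assumption~\ref{ass:interior_element} actually enters, not as a vague guarantee that the Cl\'ement operator is well behaved. The element $T$ is refined twice so that it possesses interior fine-scale nodes, and the nodal values of $w$ are chosen to solve the small linear system expressing $(P_y(\varphi_x+w))(y)=0$ at the coarse vertices $y$, i.e.\ to push $\varphi_x+w$ into the kernel of $\mathcal{I}_H$. Because $w$ vanishes on $\partial T$ and $T$ does not meet $\partial\Omega$, the Dirichlet trace is untouched; because the mesh is quasi-uniform, the coefficients are independent of $H$, so $\|w\|\lesssim\|\varphi_x\|$ and an inverse estimate gives $|||w|||\lesssim H^{-1}\|\varphi_x\|$, which is precisely the required bound on the lifting. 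You would need to supply this (or an equivalent) construction; the surrounding energy argument in your proposal is fine and matches the paper's second half.
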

\begin{proof}
  First we will prove that there exist a (non-unique) function
  $g_x\in \V^f(\omega^L_x)$ such that
  $(g_x - \varphi_x)|_{\Gamma_D}=0$ and
  $|||g_x||| \lesssim |||\varphi_x|||$ for all $x\in\N_I$. Given any
  node $x$ define $w|_{T}=g_x-\varphi_x$ and $w|_{\Omega\setminus T}=0$
  where $T$ is an interior element. The function $w$ have to fulfill
  the following restriction
  \begin{equation}
    \mathcal{I}_Hw = \mathcal{I}_H\varphi_x = \varphi_x,
  \end{equation}
  which is equivalent to
  \begin{equation}
    (P_yw)(y) = \delta_{xy},
  \end{equation}
  where $\delta_{xy}$ is the Kronecker delta function.  In order to construct
  $w$ we perform two a uniform refinements in 2D. A similar construction is
  possible in 3D using two red-green refinements but we restrict the
  discussion to 2D for simplicity. Then we have three free nodes in $T$ for a
  function that is zero on the boundary $\partial T$. We write $w$ as
  $w=\sum_{j=1}^{d+1} \alpha_j\hat\varphi_j$ where $\varphi_j$ are the $P_1$
  Lagrange basis function associated with the three interior nodes. We can
  determine $w$ by letting it fulfill
  \begin{equation}
    \sum_{i,j=1}^{d+1} \alpha_j(P_{y_i}\hat\varphi_j)(y_i) = \delta_{x,y_i}.
  \end{equation}
  The value $(P_y\hat\varphi_j)(x)$ can be computed as
  \begin{equation}\label{eq:lagrange2}
    (P_y\hat\varphi_j)(y) = \delta_y^T(\Pi^T M_{H/4} \Pi)^{-1}\Pi^TM_{H/4}\hat\varphi_j,
  \end{equation}
  where $M_{H/4}$ is a local mass matrix computed on $\omega_x^0$,
  $\delta_x^T= 1$ for index $x$ and $0$ otherwise, and
  $\Pi:\V_{H}\to\V_{H/4}$ is a projection from the finer space onto
  the coarse. On a quasi-uniform mesh $P_y\hat\varphi_j(y)$ is
  independent of $H$. Therefore, $\alpha_j$ is independent of $H$
  and there exist a constant such that
  \begin{equation}
    ||w|| \leq C || \varphi_x ||.
  \end{equation}
  This yields
  \begin{equation}
    |||w||| \lesssim 4H^{-1}||w|| \lesssim H^{-1}|| \varphi_x ||.
  \end{equation}
  Using the triangle inequality we have
  \begin{equation}
    |||g_x||| \leq |||\varphi_x||| + |||w||| \lesssim H^{-1} ||\varphi_x||.
  \end{equation}
  Next we consider the problem: find $Q_0\in \V^f(\omega^L_x)$ such
  that
  \begin{equation}
    a(Q_0,w) = a(\varphi_x - g_x,w)\quad w\in\V^f(\omega^L_x),
  \end{equation}
  where $g_x$ satisfies $g_x\in\V^f(\omega^L_x)$,
  $(\varphi_x-g_x)|_{\Gamma_D}=0$, and $|||g_x|||\lesssim
  |||\varphi_x|||$. It is clear that $Q^L(\varphi_x)= Q_0+g$.
  For the stability we obtain
  \begin{equation}
    \begin{aligned}
      ||| \varphi_x + Q^L(\varphi_x)|||^2 &\leq a(\varphi_x + Q^L(\varphi_x),\varphi_x + Q^L(\varphi_x)) = a(\varphi_x + Q^L(\varphi_x),\varphi_x + Q_0 + g) \\
      & =  a(\varphi_x + Q^L(\varphi_x),\varphi_x + g) \leq  |||\varphi_x + Q^L(\varphi_x)|||(|||\varphi_x||| + |||g_x|||) \\
      & \lesssim |||\varphi_x + Q^L(\varphi_x)|||\cdot|||\varphi_x|||,
    \end{aligned}
  \end{equation}
  which concludes the proof.
\end{proof}
\subsection{Estimation of the error for the localized multiscale method}
The a priori results for the localized multiscale method reads.
\begin{theorem}\label{thm:local_enriched_fem}
  Let $u\in\V$ solve \eqref{eq:week} and
  $\uHL\in\V_H^{\Gamma,L}$ solve \eqref{eq:idealgamma}.
  Then under Assumption~\ref{ass:interior_element} the bound
  \begin{equation}
    \begin{aligned}
      |||u- \uHL||| &\lesssim \inf_{v\in \V_H}\left(\|H^{-1}(u - v)\|_{\Omega\setminus\omega_{\Gamma}^{k-1}}+|||u - v|||_{\Omega\setminus \omega_{\Gamma}^{k-1}}\right)\\
      &\qquad  + \inf_{v\in \V_h(\omega_{\Gamma}^k)}|||u-v|||_{\omega_{\Gamma}^{k}} + \|Hf\|_{\omega_\Gamma^k} + L^{d/2}H^{-1}\gamma^L\|f\|,
    \end{aligned}
  \end{equation}
  holds for $k\geq 2$.
\end{theorem}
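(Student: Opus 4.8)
The plan is to control the error by the triangle inequality,
\[
|||u-\uHL|||\le|||u-\uref|||+|||\uref-\uH|||+|||\uH-\uHL|||,
\]
and to recognise the first two summands as already settled: Lemma~\ref{lemma:H1enrichment} bounds $|||u-\uref|||$ by exactly the two infima displayed in the statement, and Lemma~\ref{lemma:stepontheway} bounds $|||\uref-\uH|||$ by $\|Hf\|_{\omega_\Gamma^k}$. So the whole task reduces to proving the localization estimate $|||\uH-\uHL|||\lesssim L^{d/2}H^{-1}\gamma^{L}\|f\|$, i.e. to controlling the effect of replacing each global corrector $Q(\varphi_x)$ by its localized version $Q^{L}_x(\varphi_x)$.

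For this I would run a Strang-type (nonconforming Galerkin) argument. Write $\uref=\uH+u^{f}$ with $u^{f}\in\V^{f}$ as in Lemma~\ref{lemma:stepontheway}, put $z_H:=\mathcal{I}_H\uref\in\V_H$ so that $\uH=(1+Q)z_H$, and introduce the localized lift $\tilde u_H:=(1+Q^{L})z_H\in\V^{\Gamma,L}_H$. Then
\[
|||\uH-\uHL|||\le|||(Q-Q^{L})z_H|||+|||\tilde u_H-\uHL|||,
\]
where the first summand is controlled directly by the decay estimate of Lemma~\ref{lem:decay} with $v_H=z_H$. For the second, set $e:=\tilde u_H-\uHL\in\V^{\Gamma,L}_H$; since $\uHL$ is the Galerkin solution in $\V^{\Gamma,L}_H$ and $e\in\VGammah$, the identities $a(\uHL,e)=F(e)=a(\uref,e)$ give
\[
|||e|||^{2}=a(\tilde u_H-\uref,e)=a\big((Q^{L}-Q)z_H,e\big)-a(u^{f},e).
\]
The key structural point is that $u^{f}$ is $a$-orthogonal to the \emph{global} corrected space $\V_H^\Gamma$ but not to its localized counterpart: writing $e=\sum_{x\in\N_I}c_x(\varphi_x+Q^{L}_x\varphi_x)$ and using $\mathcal{I}_H e=\sum_x c_x\varphi_x$ together with $a\big(u^{f},(1+Q)\varphi_x\big)=0$, the last term collapses to $a(u^{f},e)=a\big(u^{f},(Q^{L}-Q)\mathcal{I}_H e\big)$. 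Applying Cauchy--Schwarz and Lemma~\ref{lem:decay} to $\mathcal{I}_H e$, then dividing by $|||e|||$, yields
\[
|||e|||\lesssim|||(Q-Q^{L})z_H|||+|||u^{f}|||\,\Lambda_L\,H^{-1},\qquad \Lambda_L:=L^{d/2}\gamma^{\lfloor(L-3)/3\rfloor/2},
\]
and since $|||u^{f}|||\lesssim H\|f\|_{\omega_\Gamma^k}$ the $u^{f}$-contribution is dominated by the first summand.

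It remains to turn the weight $\sum_{x\in\N_I}v_x^{2}|||Q\varphi_x|||^{2}$ that Lemma~\ref{lem:decay} produces into $H^{-2}\|f\|^{2}$ when $v_H=z_H$ (and into $H^{-2}|||e|||^{2}$ when $v_H=\mathcal{I}_H e$). For an interior node $x\in\N_I$ the corrector $Q\varphi_x\in\V^{f}$ has zero Dirichlet trace, hence is itself an admissible test function in \eqref{eq:global_corrector}, so $|||Q\varphi_x|||^{2}=-a(\varphi_x,Q\varphi_x)\le|||\varphi_x|||\,|||Q\varphi_x|||$, i.e. $|||Q\varphi_x|||\le|||\varphi_x|||\lesssim H^{d/2-1}$. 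Combining this with the spectral equivalence $\sum_x v_x^{2}H^{d}\simeq\|\sum_x v_x\varphi_x\|^{2}$, the $L^{2}$-stability of $\mathcal{I}_H$ near the boundary (which uses Assumption~\ref{ass:interior_element}, cf. Lemma~\ref{lem:approxIH}), and the a priori bounds $\|z_H\|\lesssim\|\uref\|\lesssim|||\uref|||\lesssim\|f\|$ and $\|\mathcal{I}_H e\|\lesssim\|e\|\lesssim|||e|||$, the required weight bounds follow. Inserting them everywhere above, and absorbing the floor into the constant and the surviving exponent into a renamed $\gamma\in(0,1)$, gives $|||\uH-\uHL|||\lesssim L^{d/2}H^{-1}\gamma^{L}\|f\|$; together with the first two summands this proves the theorem. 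The hypothesis $k\ge2$ enters only to guarantee the one-layer buffer $\omega_\Gamma^{k}\setminus\omega_\Gamma^{k-1}$ needed in Lemma~\ref{lemma:H1enrichment} and to keep $\V^{f}$ supported away from $\Omega\setminus\omega_\Gamma^{k-1}$. I expect the main obstacle to be exactly this last reduction — converting the corrector-weighted sum of Lemma~\ref{lem:decay} into $\|f\|$ via stability — together with correctly accounting for the cross term $a(u^{f},e)$ forced by the nonconformity (namely that $u^{f}$ is $a$-orthogonal to $\V_H^\Gamma$ but not to $\V^{\Gamma,L}_H$); the rest is a routine assembly of the preceding lemmas.
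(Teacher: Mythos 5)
Your overall architecture (triangle inequality through $\uref$ and $\uH$, then a localization estimate for $|||\uH-\uHL|||$) is viable, and your orthogonality identity $a(u^{\mathrm{f}},e)=a\bigl(u^{\mathrm{f}},(Q^{L}-Q)\mathcal{I}_H e\bigr)$ is correct. However, one step in your reduction of the corrector-weighted sum fails: you claim that for $x\in\N_I$ the global corrector $Q(\varphi_x)$ has zero Dirichlet trace, is therefore an admissible test function in \eqref{eq:global_corrector}, and hence $|||Q\varphi_x|||\le|||\varphi_x|||$. By construction $Q(\varphi_x)\in\{v\in\V^{\mathrm{f}}\mid v|_{\Gamma_D}=-\varphi_x\}$, and since $\V_H\not\subset\VGammah$ the coarse hat function of an interior node is in general \emph{not} zero on $\Gamma_D$ whenever the boundary cuts through its support; the corrector carries exactly the compensating nonzero trace (this is the whole point of the boundary condition in \eqref{eq:global_corrector}). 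So precisely for the nodes in the boundary zone — the only nodes where the construction is nontrivial — $Q\varphi_x$ is not an admissible test function and your energy bound does not come for free. The needed bound ($|||Q\varphi_x|||\lesssim H^{-1}\|\varphi_x\|\sim H^{d/2-1}$) is true, but it is the content of Lemma~\ref{lem:g}, whose proof constructs a bounded lift $g_x$ of the inhomogeneous trace; this is also where Assumption~\ref{ass:interior_element} really enters, not only through the $L^2$-stability of $\mathcal{I}_H$ as you suggest. If you replace your shortcut by an appeal to Lemma~\ref{lem:g} (for both the global and localized correctors), the rest of your weight conversion and the final bookkeeping go through.

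It is also worth noting that the paper's own proof is shorter and avoids your Strang-type cross-term analysis altogether: since $\V^{\Gamma,L}_H\subset\V$, the Galerkin solution satisfies $|||u-\uHL|||\le|||u-v|||$ for every $v\in\V^{\Gamma,L}_H$; choosing $v=(1+Q^{L})\mathcal{I}_H\uH$ and passing through $\uref$ and $\uH$ reduces everything to $|||(Q-Q^{L})\mathcal{I}_H\uH|||$, which is handled directly by Lemma~\ref{lem:decay}, Lemma~\ref{lem:g}, a Poincar\'e--Friedrichs inequality and the stability $|||\uH|||\lesssim\|f\|$. Your route, which treats $\uHL$ via Galerkin orthogonality in $\V^{\Gamma,L}_H$ and explicitly accounts for the fact that $u^{\mathrm{f}}$ is $a$-orthogonal to $\V^{\Gamma}_H$ but not to $\V^{\Gamma,L}_H$, is legitimate but adds machinery that the conforming quasi-optimality argument makes unnecessary.
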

\begin{proof}
  Since $\V_H^{\Gamma,L}\subset\V$ we have the best approximation result
  \begin{equation}
     |||u- \uHL||| \leq |||u- v_H|||\quad \text{for all }v_H\in\V_H^{\Gamma,L}.
  \end{equation}
  We obtain
  \begin{equation}\label{eq:main1}
    |||u- \uHL||| \leq |||u - \uref||| + |||\uref - \uH||| + |||\uH-v_H|||,
  \end{equation}
  using the triangle inequality. The first and second term is bounded
  using Lemma~\ref{lemma:H1enrichment} and
  \ref{lemma:stepontheway}. For the third term we choose
  $v_H=\uH + Q(\uH)$
  which gives
  \begin{equation}\label{eq:main2}
    |||\uH-v_H|||^2 = |||Q(\uH)-Q^L(\uH)|||^2\lesssim L^{d}\gamma^{2L}\sum_{x\in\mathcal{N}_I}v_x^2|||Q \phi_x|||^2,
  \end{equation}
  using Lemma~\ref{lem:decay}. Lemma~\ref{lem:g} now gives
  \begin{equation}\label{eq:main3}
    \begin{aligned}
    |||\uH-v_H|||^2 &\lesssim H^{-2}L^{d}\gamma^{2L}\sum_{x\in\mathcal{N}_I}\uH(x)^2\|\varphi_x\|^2 \lesssim L^{d}\gamma^{2L}\|\uH\|^2\\
    &     \lesssim H^{-2}L^{d}\gamma^{2L} |||\uH|||^2
     \leq H^{-2}L^{d}\gamma^{2L} ||f||^2,
    \end{aligned}
  \end{equation}
  where also a Poincar\'e-Friedrich inequality has been used. Combining \eqref{eq:main1}
  and \eqref{eq:main3} concludes the proof.
\end{proof}

\section{Implementation and conditioning}
\label{implement}
In this section we will shortly discuss how to implement the proposed method and
analyze the conditioning of the matrices. For a more detailed discussion on
implementation of LOD see \cite{EHMP14}.

\subsection{Implementation}\label{sec:implement}
To compute $Q^L(\varphi_x)$ in \eqref{eq:local_corrector} we impose
the extra condition $\mathcal{I}_Hv=0$ using Lagrangian
multipliers. Let $n_x$ and $N_x$ be the number of fine and coarse
degrees of freedom in the patch $\omega^0_x$. Let $M_{x}$ and
$K_{x}$ denote the local mass and stiffness matrix on $\omega^0_x$
satisfying
\begin{equation}
  (v,w)_{\omega^0_x}\quad \Leftrightarrow\quad \hat w^T M_{x}\hat v,
\end{equation}
and
\begin{equation}
  a(v,w)|_{\omega^0_x}\quad \Leftrightarrow\quad \hat w^TK_{x}\hat v,
\end{equation}
where $v,w\in\V_h|_{\omega^0_x}$ and
$\hat w,\hat v\in\mathbb{R}^{n_x}$ are the nodal values of $v,w$. We
also define the projection matrix
$\Pi_x:\{v\in\V_H(\omega^0_x)\mid
\text{supp}(v)\cap\omega^0_x\neq 0\}\to\V_h(\omega^0_x)$
of size $(n_x\times N_x)$ which project a coarse function onto the
fine mesh and a Kronecker delta vector of size $N_x\times 1$ as
$\delta_x=(0,\dots,0,1,0,\dots,0)$ where the $1$ is in node $x$. We obtain
\begin{equation}
P_xv(x)=0 \quad \Leftrightarrow\quad \lambda_x^T\hat v=\delta_x^T(\Pi_x^T\widehat M_{x}\Pi_x)^{-1}\Pi^T_xM_{x}\hat v = 0.
\end{equation}
Set
$\Lambda=[\lambda_{y_1}, \lambda_{y_1}, \dots, \lambda_{y_{N_x}}]$,
then \eqref{eq:local_corrector} is equivalent to solving the linear
system
\begin{equation}\label{eq:fine_system}
  \begin{bmatrix}
    K_x & \Lambda \\
    \Lambda^T & 0
  \end{bmatrix}
  \begin{bmatrix}
     \widehat Q^L(\varphi_x) \\
      \mu
  \end{bmatrix}
  =
  \begin{bmatrix}
     - K_x\Pi_x\widehat\varphi_x  \\
     0
  \end{bmatrix},
\end{equation}
where $\widehat Q^L(\varphi_x)\in\mathbb{R}^{n_x}$ contains the nodal values
of $Q^L(\varphi_x)$ and $\mu$ is a Lagrange multiplier. For each coarse node
$x$ in $\omega^0_x$ we need to invert $\Pi_x^T\widehat M_{x}\Pi_x$ to assemble
\eqref{eq:fine_system}, however since the size is only $(N_x\times N_x)$ it is
cheap. The coarse scale stiffness matrix $\widehat K$ in
\eqref{eq:local_multiscale} is given element wise by
\begin{equation}
  \widehat K_{i,j} = a(\varphi_j+Q^L(\varphi_j),\varphi_i+Q^L(\varphi_i)).
\end{equation}
We save the nodal values of the corrected basis 
\begin{equation}\label{eq:corr_basis}
  \Phi =
  \begin{bmatrix}
    \varphi_{y_1}+\widehat Q^L(\varphi_{y_1}),\varphi_{y_2}+\widehat Q^L(\varphi_{y_2})\dots,\varphi_{y_N}+\widehat Q^L(\varphi_{y_N})
  \end{bmatrix}.
\end{equation}
Given the fine scale stiffness matrix $K$ and the collection of
corrected basis functions $\Phi$, we can compute the coarse stiffness
matrix as
\begin{equation}
  \widehat K = \Phi^T K \Phi,
\end{equation}
and the linear system \eqref{eq:local_multiscale} is computed as
\begin{equation}
  \widehat K \hat u^{\Gamma,L}_{H} = b,
\end{equation}
where $\hat u^{\Gamma,L}_{H}$ is the nodal values of
$\hat u^{\Gamma,L}_{H}$ and $b$ correspond to the right hand side
$b_{y_1}=(f,\varphi_{y_1}+\widehat Q^L(\varphi_{y_1}))$.  However,
the fine stiffness matrix does not have to be assembled globally.
If a Petrov-Galerkin formulation is used, further savings can be
made \cite{ElGiHe15}.

\subsection{Conditioning}
The Euclidean matrix norm is defined as
\begin{equation}
  || A ||_N = \sup_{0\neq v\in\mathbb{R}^N} \frac{|Av|_N}{|v|_N},
\end{equation}
where $\langle v, w \rangle=\sum_{i=1}^N v(x_i)w(x_i)$ and
$|v|_N= \sqrt{\langle v, v \rangle}$.
\begin{theorem}
  The bound
  \begin{equation}
    \kappa = \|\widehat K\|_{N}\|\widehat K^{-1}\|_{N} \lesssim H^{-2},
  \end{equation}
  on the condition number $\kappa$ holds.
\end{theorem}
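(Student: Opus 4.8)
The plan is to reduce the bound to a two-sided eigenvalue estimate for the symmetric positive definite matrix $\widehat K$. Since $\widehat K$ is SPD, $\|\widehat K\|_N=\lambda_{\max}(\widehat K)$ and $\|\widehat K^{-1}\|_N=\lambda_{\min}(\widehat K)^{-1}$, so it suffices to show $c\,H^{d}\le \widehat K\mathbf{v}\cdot\mathbf{v}/|\mathbf{v}|_N^{2}\le C\,H^{d-2}$ for every $0\neq\mathbf{v}\in\mathbb{R}^N$. Writing $\mathbf{v}=(v_x)_{x\in\N_I}$ and $\psi_x:=\varphi_x+Q^L_x(\varphi_x)$, the identity $\widehat K=\Phi^TK\Phi$ gives $\widehat K\mathbf{v}\cdot\mathbf{v}=a\bigl(\textstyle\sum_x v_x\psi_x,\sum_x v_x\psi_x\bigr)=|||v_H|||^2$ with $v_H=\sum_{x\in\N_I}v_x\psi_x\in\V^{\Gamma,L}_H$, so the whole statement is a comparison of $|||v_H|||$ with $|\mathbf{v}|_N=\bigl(\sum_x v_x^2\bigr)^{1/2}$.

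For the lower eigenvalue bound I would use that every corrector lies in $\V^f=\kernel\mathcal{I}_H$ while $\mathcal{I}_H\varphi_x=\varphi_x$, so by linearity $\mathcal{I}_H v_H=\sum_{x\in\N_I}v_x\varphi_x=:\widetilde v_H\in\V_H$ (and $\widetilde v_H$ vanishes at every node not in $\N_I$). The projective Cl\'ement operator is $L^2$-stable on the quasi-uniform background mesh, hence $\|\widetilde v_H\|\lesssim\|v_H\|$; combining this with the Poincar\'e--Friedrichs/coercivity inequality $\|v_H\|\lesssim\CP\,\dia(\Omega)\,|||v_H|||$ (available since $\Gamma_D\neq\emptyset$ or $\kappa>0$) and with the lower mass-matrix estimate $\|\widetilde v_H\|^2\gtrsim H^{d}\sum_x v_x^2=H^{d}|\mathbf{v}|_N^2$, one obtains $H^{d}|\mathbf{v}|_N^2\lesssim|||v_H|||^2$, i.e.\ $\lambda_{\min}(\widehat K)\gtrsim H^{d}$. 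The lower mass estimate is exactly where Assumption~\ref{ass:interior_element} enters: on each interior element $T$ the local mass matrix has eigenvalues $\sim H^d$, so $\|\widetilde v_H\|^2\ge\sum_{T\text{ interior}}\|\widetilde v_H\|_T^2\gtrsim H^d\sum_{T\text{ interior}}\sum_{x\in\overline T}v_x^2$, and since every $x\in\N_I$ is a vertex of some interior element every $v_x^2$ is recovered.

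For the upper eigenvalue bound I would exploit the finite overlap of the supports: $\support\psi_x\subset\omega_x^L$ (indeed $Q^L_x(\varphi_x)=0$ unless $\omega_x^L\cap\omega_\Gamma^k\neq\emptyset$), and at most $\mathcal{O}(L^{d})$ of the patches $\omega_x^L$ meet at any point, so Cauchy--Schwarz pointwise gives $|||v_H|||^2\lesssim L^{d}\sum_{x}v_x^2\,|||\psi_x|||^2$. Lemma~\ref{lem:g} then bounds $|||\psi_x|||^2\lesssim H^{-2}\|\varphi_x\|^2\le H^{-2}\|\varphi_x\|_{L^2(\Omega_0)}^2\sim H^{d-2}$, whence $\lambda_{\max}(\widehat K)\lesssim L^{d}H^{d-2}$; dividing by the lower bound yields $\kappa\lesssim L^{d}H^{-2}$, and with $L=\lceil C_2\log_2(H^{-1})\rceil$ the factor $L^{d}$ is merely polylogarithmic and is absorbed, giving $\kappa\lesssim H^{-2}$. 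The part I expect to require the most care is the boundary behaviour on cut elements: verifying that the local interpolation and Poincar\'e-type estimates of Section~\ref{error} really assemble into a global $L^2$-stability bound $\|\mathcal{I}_H v_H\|\lesssim\|v_H\|$ and that the lower mass bound is uniform in how $\partial\Omega$ slices the coarse mesh — precisely the role of Assumption~\ref{ass:interior_element} — whereas the upper bound is arranged to avoid any inverse inequality on a cut element, routing the energy control of $\psi_x$ through Lemma~\ref{lem:g} and bounding $\|\varphi_x\|_{L^2(\Omega)}$ from above trivially by its norm over the full background element.
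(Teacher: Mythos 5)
Your proof is essentially the paper's: the same three ingredients --- the inverse-type bound $|||\varphi_x+Q^L_x(\varphi_x)|||\lesssim H^{-1}\|\varphi_x\|$ of Lemma~\ref{lem:g}, a Poincar\'e--Friedrichs inequality on $\Omega$, and the equivalence $|v|_N\sim H^{-d/2}\|v\|$ under Assumption~\ref{ass:interior_element} --- are merely repackaged as two-sided Rayleigh-quotient estimates for the symmetric positive definite matrix $\widehat K$, whereas the paper bounds $\|\widehat K\|_N$ and $\|\widehat K^{-1}\|_N$ directly by duality; your lower bound through $\mathcal{I}_H v_H$, $L^2$-stability and the interior-element mass bound is precisely the paper's properties 2) and 3).

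Two caveats. First, your upper bound retains the patch-overlap factor $L^{d}$, and since $L=\lceil C_2\log_2(H^{-1})\rceil$ this factor is not independent of $H$, so what you have literally proved is $\kappa\lesssim L^{d}H^{-2}$; under the paper's convention that the constant hidden in $\lesssim$ is mesh-independent it cannot simply be ``absorbed''. The paper's displayed proof suppresses the same constant (passing from the per-basis-function bound of Lemma~\ref{lem:g} to $|\widehat K v|_N\leq H^{d-2}|v|_N$ requires exactly such an assembly over the overlapping supports $\omega_x^L$), so this is a shared imprecision rather than an error of yours; if you want to remove it, estimate $|||\sum_x v_x(\varphi_x+Q^L_x\varphi_x)|||$ by the global-corrector part $(1+Q)\sum_x v_x\varphi_x$, whose energy is controlled without any overlap count by the minimization/$g$-construction used in the proof of Lemma~\ref{lem:g}, plus the localization error, which Lemma~\ref{lem:decay} makes exponentially small in $L$. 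Second, your claim that every $x\in\N_I$ is a vertex of an interior element does not follow verbatim from Assumption~\ref{ass:interior_element}, which only guarantees that every \emph{element} shares a vertex with an interior element; the lower mass bound needs $\|\varphi_x\|\gtrsim H^{d/2}$ for every interior node, which is also what the paper's step $\sum_i v_i^2\lesssim H^{-d}\sum_i v_i^2\|\varphi_i\|^2$ tacitly assumes --- so you match the paper's level of rigor here, but you should state the hypothesis you actually use.
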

\begin{proof}
  To prove the bound of the condition number we use the following three properties.
  \begin{enumerate}
  \item An inverse type inequality for the modified basis functions. We
  have
    \begin{equation}
      \begin{aligned}
        ||| \varphi_i + Q(\varphi_i) ||| \lesssim H^{-1}\| \varphi_i\|,
      \end{aligned}
    \end{equation}
    from Lemma~\ref{lem:g}.
  \item A Poincar\'e-Friedrich type inequality on the full domain, see
    Section~\ref{sec:PF}.
  \item An equivalence between the Euclidean norm and the
    $L^2$-norm. We have that
    \begin{equation}
      \begin{aligned}
        \|v\|^2 & \leq \sum v_i^2\|\varphi_i + Q(\varphi_i)\|^2 \lesssim  \sum v_i^2(\|\varphi_i\| + \|Q(\varphi_i)-\mathcal{I}_HQ(\varphi_i)\|)^2 \\
        &  \lesssim \sum v_i^2(\|\varphi_i\| + H\|\nabla Q(\varphi_i)\|)^2 \\
        & \lesssim \sum v_i^2(\|\varphi_i\| + H\|\nabla \varphi_i\| + H\|\nabla(\varphi_i - Q(\varphi_i))\|)^2 \\
        & \lesssim \sum v_i^2\|\varphi_i\|^2 \lesssim H^d|v|_N,
      \end{aligned}
    \end{equation}
    and
    \begin{equation}
      |v|_N^2 = \sum_{i=1}^N v_i^2 \lesssim H^{-d}\sum_{i=1}^N v_i^2\|\varphi_i\|^2 \lesssim H^{-d} \|\sum_{i=1}^N v_i\varphi_i\|^2= H^{-d}\|\mathcal{I}_H v\|^2 \lesssim H^{-d}\| v\|^2.
    \end{equation}
    holds, hence $|v|_N \sim H^{-d/2}\|v\|$.
  \end{enumerate}
  We have
  \begin{equation}
    |\widehat Kv|_N = \sup_{0\neq w\in\mathbb{R}^N} \frac{|\langle\widehat Kv, w  \rangle|}{|w|_N} = \sup_{0\neq w\in\mathbb{R}^N} \frac{|a(v,w)|}{|w|_N} =  \sup_{0\neq w\in\mathbb{R}^N} \frac{|||v|||\cdot|||w|||}{|w|_N} \leq H^{d-2} |v|_N,
  \end{equation}
  using property 1) and 3). Also
  \begin{equation}
    \begin{aligned}
    |\widehat K^{-1}v|_N^2 &= H^{-d}\| \widehat K^{-1}v \|^2 \leq \CP H^{-d} ||| \widehat K^{-1}v |||^2 \\
    &\leq H^{-d}\langle \widehat K\widehat K^{-1}v ,\widehat K^{-1}v\rangle \leq H^{-d}|v|_N\cdot|\widehat K^{-1}v|_N,
    \end{aligned}
  \end{equation}
  using property 2) and 3). The proof is concluded by taking the supremum over $v$.
\end{proof}

\section{Numerical experiments}
\label{numerics}
In the following section we present some numerical experiments that verifies
our analytical results. In all the experiments we fix the right hand
side to $f=1$ and the localization parameter $L=\lceil1.5\log_2(H^{-1})
\rceil$.

\subsection{Accuracy on fractal shaped domain}
\label{sec:fractalnum}
We consider the domain in Figure~\ref{fig:fractal}. We use homogeneous
Dirichlet boundary conditions on the most left, down, and right hand side
boundaries and Robin boundary condition with $\kappa = 10$ on the rest, see
Figure~\ref{fig:fractal1}.
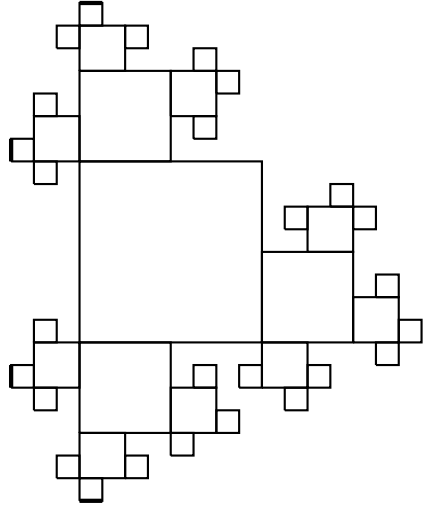
\begin{figure}[htb]
  \centering
\begin{tikzpicture}[scale=0.8]
  \coordinate [label={right:$ $}] (S1) at (0,1.5);
  \coordinate [label={right:$ $}] (S1) at (2.9,0.75);
  \coordinate [label={right:$ $}] (S1) at (3.7,0.375);
  \coordinate [label={right:$ $}] (S1) at (4.6,0.2);

  \newcommand{\sx}{0cm}
  \newcommand{\sy}{0cm}  
  \newcommand{\squarewidth}{3cm}
  \coordinate [label={below right:$ $}] (S1) at (\sx, \sy);
  \coordinate [label={above right:$ $}] (S2) at (\sx, \sy+\squarewidth);

  \coordinate [label={below right:$ $}] (S1) at (\sx, \sy);
  \coordinate [label={above right:$ $}] (S2) at (\sx, \sy+\squarewidth);
  \coordinate [label={below left:$  $}]  (S3) at (\sx+\squarewidth, \sy+\squarewidth);
  \coordinate [label={below left:$ $}]  (S4) at (\sx+\squarewidth, \sy);
  \draw [thick](S1) -- (S2) -- (S3) -- (S4) -- (S1);

  \renewcommand{\sx}{3cm}
  \renewcommand{\sy}{0cm}  
  \renewcommand{\squarewidth}{1.5cm}
  \coordinate [label={below right:$ $}] (S1) at (\sx, \sy);
  \coordinate [label={above right:$ $}] (S2) at (\sx, \sy+\squarewidth);
  \coordinate [label={below left:$ $}]  (S3) at (\sx+\squarewidth, \sy+\squarewidth);
  \coordinate [label={below left:$ $}]  (S4) at (\sx+\squarewidth, \sy);
  \draw [thick] (S1) -- (S2) -- (S3) -- (S4) -- (S1);

  \renewcommand{\sx}{0cm}
  \renewcommand{\sy}{3cm}  
  \renewcommand{\squarewidth}{1.5cm}
  \coordinate [label={below right:$ $}] (S1) at (\sx, \sy);
  \coordinate [label={above right:$ $}] (S2) at (\sx, \sy+\squarewidth);
  \coordinate [label={below left:$ $}]  (S3) at (\sx+\squarewidth, \sy+\squarewidth);
  \coordinate [label={below left:$ $}]  (S4) at (\sx+\squarewidth, \sy);
  \draw [thick] (S1) -- (S2) -- (S3) -- (S4) -- (S1);

  \renewcommand{\sx}{0cm}
  \renewcommand{\sy}{-1.5cm}  
  \renewcommand{\squarewidth}{1.5cm}
  \coordinate [label={below right:$ $}] (S1) at (\sx, \sy);
  \coordinate [label={above right:$ $}] (S2) at (\sx, \sy+\squarewidth);
  \coordinate [label={below left:$ $}]  (S3) at (\sx+\squarewidth, \sy+\squarewidth);
  \coordinate [label={below left:$ $}]  (S4) at (\sx+\squarewidth, \sy);
  \draw [thick] (S1) -- (S2) -- (S3) -- (S4) -- (S1);

  \renewcommand{\sx}{4.5cm}
  \renewcommand{\sy}{0cm}  
  \renewcommand{\squarewidth}{0.75cm}
  \coordinate [label={below right:$ $}] (S1) at (\sx, \sy);
  \coordinate [label={above right:$ $}] (S2) at (\sx, \sy+\squarewidth);
  \coordinate [label={below left:\scriptsize$ $}]  (S3) at (\sx+\squarewidth, \sy+\squarewidth);
  \coordinate [label={below left:$ $}]  (S4) at (\sx+\squarewidth, \sy);
  \draw [thick] (S1) -- (S2) -- (S3) -- (S4) -- (S1);

  \renewcommand{\sx}{0cm}
  \renewcommand{\sy}{-2.25cm}  
  \renewcommand{\squarewidth}{0.75cm}
  \coordinate [label={below right:$ $}] (S1) at (\sx, \sy);
  \coordinate [label={above right:$ $}] (S2) at (\sx, \sy+\squarewidth);
  \coordinate [label={below left:$ $}]  (S3) at (\sx+\squarewidth, \sy+\squarewidth);
  \coordinate [label={below left:$ $}]  (S4) at (\sx+\squarewidth, \sy);
  \draw [thick] (S1) -- (S2) -- (S3) -- (S4) -- (S1);

  \renewcommand{\sx}{0-0.75cm}
  \renewcommand{\sy}{-0.75cm}  
  \renewcommand{\squarewidth}{0.75cm}
  \coordinate [label={below right:$ $}] (S1) at (\sx, \sy);
  \coordinate [label={above right:$ $}] (S2) at (\sx, \sy+\squarewidth);
  \coordinate [label={below left:$ $}]  (S3) at (\sx+\squarewidth, \sy+\squarewidth);
  \coordinate [label={below left:$ $}]  (S4) at (\sx+\squarewidth, \sy);
  \draw [thick] (S1) -- (S2) -- (S3) -- (S4) -- (S1);

  \renewcommand{\sx}{1.5cm}
  \renewcommand{\sy}{-1.5cm}  
  \renewcommand{\squarewidth}{0.75cm}
  \coordinate [label={below right:$ $}] (S1) at (\sx, \sy);
  \coordinate [label={above right:$ $}] (S2) at (\sx, \sy+\squarewidth);
  \coordinate [label={below left:$ $}]  (S3) at (\sx+\squarewidth, \sy+\squarewidth);
  \coordinate [label={below left:$ $}]  (S4) at (\sx+\squarewidth, \sy);
  \draw [thick] (S1) -- (S2) -- (S3) -- (S4) -- (S1);

  \renewcommand{\sx}{3cm}
  \renewcommand{\sy}{-0.75cm}  
  \renewcommand{\squarewidth}{0.75cm}
  \coordinate [label={below right:$ $}] (S1) at (\sx, \sy);
  \coordinate [label={above right:$ $}] (S2) at (\sx, \sy+\squarewidth);
  \coordinate [label={below left:$ $}]  (S3) at (\sx+\squarewidth, \sy+\squarewidth);
  \coordinate [label={below left:$ $}]  (S4) at (\sx+\squarewidth, \sy);
  \draw [thick] (S1) -- (S2) -- (S3) -- (S4) -- (S1);

  \renewcommand{\sx}{3.75cm}
  \renewcommand{\sy}{1.5cm}  
  \renewcommand{\squarewidth}{0.75cm}
  \coordinate [label={below right:$ $}] (S1) at (\sx, \sy);
  \coordinate [label={above right:$ $}] (S2) at (\sx, \sy+\squarewidth);
  \coordinate [label={below left:$ $}]  (S3) at (\sx+\squarewidth, \sy+\squarewidth);
  \coordinate [label={below left:$ $}]  (S4) at (\sx+\squarewidth, \sy);
  \draw [thick] (S1) -- (S2) -- (S3) -- (S4) -- (S1);

  \renewcommand{\sx}{0cm}
  \renewcommand{\sy}{4.5cm}  
  \renewcommand{\squarewidth}{0.75cm}
  \coordinate [label={below right:$ $}] (S1) at (\sx, \sy);
  \coordinate [label={above right:$ $}] (S2) at (\sx, \sy+\squarewidth);
  \coordinate [label={below left:$ $}]  (S3) at (\sx+\squarewidth, \sy+\squarewidth);
  \coordinate [label={below left:$ $}]  (S4) at (\sx+\squarewidth, \sy);
  \draw [thick] (S1) -- (S2) -- (S3) -- (S4) -- (S1);

  \renewcommand{\sx}{1.5cm}
  \renewcommand{\sy}{3.75cm}  
  \renewcommand{\squarewidth}{0.75cm}
  \coordinate [label={below right:$ $}] (S1) at (\sx, \sy);
  \coordinate [label={above right:$ $}] (S2) at (\sx, \sy+\squarewidth);
  \coordinate [label={below left:$ $}]  (S3) at (\sx+\squarewidth, \sy+\squarewidth);
  \coordinate [label={below left:$ $}]  (S4) at (\sx+\squarewidth, \sy);
  \draw [thick] (S1) -- (S2) -- (S3) -- (S4) -- (S1);

  \renewcommand{\sx}{-.75cm}
  \renewcommand{\sy}{3cm}  
  \renewcommand{\squarewidth}{0.75cm}
  \coordinate [label={below right:$ $}] (S1) at (\sx, \sy);
  \coordinate [label={above right:$ $}] (S2) at (\sx, \sy+\squarewidth);
  \coordinate [label={below left:$ $}]  (S3) at (\sx+\squarewidth, \sy+\squarewidth);
  \coordinate [label={below left:$ $}]  (S4) at (\sx+\squarewidth, \sy);
  \draw [thick] (S1) -- (S2) -- (S3) -- (S4) -- (S1);

  \renewcommand{\sx}{5.25cm}
  \renewcommand{\sy}{0cm}  
  \renewcommand{\squarewidth}{0.375cm}
  \coordinate [label={below right:$ $}] (S1) at (\sx, \sy);
  \coordinate [label={above right:$ $}] (S2) at (\sx, \sy+\squarewidth);
  \coordinate [label={below left:\scriptsize$ $}]  (S3) at (\sx+\squarewidth, \sy+\squarewidth);
  \coordinate [label={below left:$ $}]  (S4) at (\sx+\squarewidth, \sy);
  \draw [thick] (S1) -- (S2) -- (S3) -- (S4) -- (S1);  

  \coordinate [label={right:$ $}] (S5) at (\sx+\squarewidth+.15cm, \sy);
  \coordinate [label={below right:$ $}] (S6) at (\sx+\squarewidth+.15cm, \sy+\squarewidth);

  \renewcommand{\sx}{4.875cm}
  \renewcommand{\sy}{0.75cm}  
  \renewcommand{\squarewidth}{0.375cm}
  \coordinate [label={below right:$ $}] (S1) at (\sx, \sy);
  \coordinate [label={above right:$ $}] (S2) at (\sx, \sy+\squarewidth);
  \coordinate [label={below left:\scriptsize$ $}]  (S3) at (\sx+\squarewidth, \sy+\squarewidth);
  \coordinate [label={below left:$ $}]  (S4) at (\sx+\squarewidth, \sy);
  \draw [thick] (S1) -- (S2) -- (S3) -- (S4) -- (S1);  

  \renewcommand{\sx}{4.875cm}
  \renewcommand{\sy}{-0.375cm}  
  \renewcommand{\squarewidth}{0.375cm}
  \coordinate [label={below right:$ $}] (S1) at (\sx, \sy);
  \coordinate [label={above right:$ $}] (S2) at (\sx, \sy+\squarewidth);
  \coordinate [label={below left:\scriptsize$ $}]  (S3) at (\sx+\squarewidth, \sy+\squarewidth);
  \coordinate [label={below left:$ $}]  (S4) at (\sx+\squarewidth, \sy);
  \draw [thick] (S1) -- (S2) -- (S3) -- (S4) -- (S1);  

  \renewcommand{\sx}{0cm}
  \renewcommand{\sy}{5.25cm}  
  \renewcommand{\squarewidth}{0.375cm}
  \coordinate [label={below right:$ $}] (S1) at (\sx, \sy);
  \coordinate [label={above right:$ $}] (S2) at (\sx, \sy+\squarewidth);
  \coordinate [label={below left:\scriptsize$ $}]  (S3) at (\sx+\squarewidth, \sy+\squarewidth);
  \coordinate [label={below left:$ $}]  (S4) at (\sx+\squarewidth, \sy);
  \draw [thick] (S1) -- (S2) -- (S3) -- (S4) -- (S1);  
  \draw [ultra thick] (S2) -- (S3);

  \renewcommand{\sx}{-0.375cm}
  \renewcommand{\sy}{4.8750cm}  
  \renewcommand{\squarewidth}{0.375cm}
  \coordinate [label={below right:$ $}] (S1) at (\sx, \sy);
  \coordinate [label={above right:$ $}] (S2) at (\sx, \sy+\squarewidth);
  \coordinate [label={below left:\scriptsize$ $}]  (S3) at (\sx+\squarewidth, \sy+\squarewidth);
  \coordinate [label={below left:$ $}]  (S4) at (\sx+\squarewidth, \sy);
  \draw [thick] (S1) -- (S2) -- (S3) -- (S4) -- (S1);  

  \renewcommand{\sx}{0.75cm}
  \renewcommand{\sy}{4.8750cm}  
  \renewcommand{\squarewidth}{0.375cm}
  \coordinate [label={below right:$ $}] (S1) at (\sx, \sy);
  \coordinate [label={above right:$ $}] (S2) at (\sx, \sy+\squarewidth);
  \coordinate [label={below left:\scriptsize$ $}]  (S3) at (\sx+\squarewidth, \sy+\squarewidth);
  \coordinate [label={below left:$ $}]  (S4) at (\sx+\squarewidth, \sy);
  \draw [thick] (S1) -- (S2) -- (S3) -- (S4) -- (S1);  

  \renewcommand{\sx}{2.25cm}
  \renewcommand{\sy}{4.1250cm}  
  \renewcommand{\squarewidth}{0.375cm}
  \coordinate [label={below right:$ $}] (S1) at (\sx, \sy);
  \coordinate [label={above right:$ $}] (S2) at (\sx, \sy+\squarewidth);
  \coordinate [label={below left:\scriptsize$ $}]  (S3) at (\sx+\squarewidth, \sy+\squarewidth);
  \coordinate [label={below left:$ $}]  (S4) at (\sx+\squarewidth, \sy);
  \draw [thick] (S1) -- (S2) -- (S3) -- (S4) -- (S1);  

  \renewcommand{\sx}{1.875cm}
  \renewcommand{\sy}{4.5cm}  
  \renewcommand{\squarewidth}{0.375cm}
  \coordinate [label={below right:$ $}] (S1) at (\sx, \sy);
  \coordinate [label={above right:$ $}] (S2) at (\sx, \sy+\squarewidth);
  \coordinate [label={below left:\scriptsize$ $}]  (S3) at (\sx+\squarewidth, \sy+\squarewidth);
  \coordinate [label={below left:$ $}]  (S4) at (\sx+\squarewidth, \sy);
  \draw [thick] (S1) -- (S2) -- (S3) -- (S4) -- (S1);  

  \renewcommand{\sx}{1.875cm}
  \renewcommand{\sy}{3.375cm}  
  \renewcommand{\squarewidth}{0.375cm}
  \coordinate [label={below right:$ $}] (S1) at (\sx, \sy);
  \coordinate [label={above right:$ $}] (S2) at (\sx, \sy+\squarewidth);
  \coordinate [label={below left:\scriptsize$ $}]  (S3) at (\sx+\squarewidth, \sy+\squarewidth);
  \coordinate [label={below left:$ $}]  (S4) at (\sx+\squarewidth, \sy);
  \draw [thick] (S1) -- (S2) -- (S3) -- (S4) -- (S1);  

  \renewcommand{\sx}{-1.125cm}
  \renewcommand{\sy}{3cm}  
  \renewcommand{\squarewidth}{0.375cm}
  \coordinate [label={below right:$ $}] (S1) at (\sx, \sy);
  \coordinate [label={above right:$ $}] (S2) at (\sx, \sy+\squarewidth);
  \coordinate [label={below left:\scriptsize$ $}]  (S3) at (\sx+\squarewidth, \sy+\squarewidth);
  \coordinate [label={below left:$ $}]  (S4) at (\sx+\squarewidth, \sy);
  \draw [thick] (S1) -- (S2) -- (S3) -- (S4) -- (S1);  
  \draw [ultra thick] (S1) -- (S2);

  \renewcommand{\sx}{-0.75cm}
  \renewcommand{\sy}{3.75cm}  
  \renewcommand{\squarewidth}{0.375cm}
  \coordinate [label={below right:$ $}] (S1) at (\sx, \sy);
  \coordinate [label={above right:$ $}] (S2) at (\sx, \sy+\squarewidth);
  \coordinate [label={below left:\scriptsize$ $}]  (S3) at (\sx+\squarewidth, \sy+\squarewidth);
  \coordinate [label={below left:$ $}]  (S4) at (\sx+\squarewidth, \sy);
  \draw [thick] (S1) -- (S2) -- (S3) -- (S4) -- (S1);  

  \renewcommand{\sx}{-0.75cm}
  \renewcommand{\sy}{2.6247cm}  
  \renewcommand{\squarewidth}{0.375cm}
  \coordinate [label={below right:$ $}] (S1) at (\sx, \sy);
  \coordinate [label={above right:$ $}] (S2) at (\sx, \sy+\squarewidth);
  \coordinate [label={below left:\scriptsize$ $}]  (S3) at (\sx+\squarewidth, \sy+\squarewidth);
  \coordinate [label={below left:$ $}]  (S4) at (\sx+\squarewidth, \sy);
  \draw [thick] (S1) -- (S2) -- (S3) -- (S4) -- (S1);  

  \renewcommand{\sx}{-1.125cm}
  \renewcommand{\sy}{-.75cm}  
  \renewcommand{\squarewidth}{0.375cm}
  \coordinate [label={below right:$ $}] (S1) at (\sx, \sy);
  \coordinate [label={above right:$ $}] (S2) at (\sx, \sy+\squarewidth);
  \coordinate [label={below left:\scriptsize$ $}]  (S3) at (\sx+\squarewidth, \sy+\squarewidth);
  \coordinate [label={below left:$ $}]  (S4) at (\sx+\squarewidth, \sy);
  \draw [thick] (S1) -- (S2) -- (S3) -- (S4) -- (S1);  
  \draw [ultra thick] (S1) -- (S2);

  \renewcommand{\sx}{-0.75cm}
  \renewcommand{\sy}{0cm}  
  \renewcommand{\squarewidth}{0.375cm}
  \coordinate [label={below right:$ $}] (S1) at (\sx, \sy);
  \coordinate [label={above right:$ $}] (S2) at (\sx, \sy+\squarewidth);
  \coordinate [label={below left:\scriptsize$ $}]  (S3) at (\sx+\squarewidth, \sy+\squarewidth);
  \coordinate [label={below left:$ $}]  (S4) at (\sx+\squarewidth, \sy);
  \draw [thick] (S1) -- (S2) -- (S3) -- (S4) -- (S1);  

  \renewcommand{\sx}{-0.75cm}
  \renewcommand{\sy}{-1.1253cm}  
  \renewcommand{\squarewidth}{0.375cm}
  \coordinate [label={below right:$ $}] (S1) at (\sx, \sy);
  \coordinate [label={above right:$ $}] (S2) at (\sx, \sy+\squarewidth);
  \coordinate [label={below left:\scriptsize$ $}]  (S3) at (\sx+\squarewidth, \sy+\squarewidth);
  \coordinate [label={below left:$ $}]  (S4) at (\sx+\squarewidth, \sy);
  \draw [thick] (S1) -- (S2) -- (S3) -- (S4) -- (S1);  

  \renewcommand{\sx}{0cm}
  \renewcommand{\sy}{-2.6250cm}  
  \renewcommand{\squarewidth}{0.375cm}
  \coordinate [label={below right:$ $}] (S1) at (\sx, \sy);
  \coordinate [label={above right:$ $}] (S2) at (\sx, \sy+\squarewidth);
  \coordinate [label={below left:\scriptsize$ $}]  (S3) at (\sx+\squarewidth, \sy+\squarewidth);
  \coordinate [label={below left:$ $}]  (S4) at (\sx+\squarewidth, \sy);
  \draw [thick] (S1) -- (S2) -- (S3) -- (S4) -- (S1);  
  \draw [ultra thick] (S4) -- (S1);

  \renewcommand{\sx}{-0.375cm}
  \renewcommand{\sy}{-2.25cm}  
  \renewcommand{\squarewidth}{0.375cm}
  \coordinate [label={below right:$ $}] (S1) at (\sx, \sy);
  \coordinate [label={above right:$ $}] (S2) at (\sx, \sy+\squarewidth);
  \coordinate [label={below left:\scriptsize$ $}]  (S3) at (\sx+\squarewidth, \sy+\squarewidth);
  \coordinate [label={below left:$ $}]  (S4) at (\sx+\squarewidth, \sy);
  \draw [thick] (S1) -- (S2) -- (S3) -- (S4) -- (S1);  

  \renewcommand{\sx}{0.75cm}
  \renewcommand{\sy}{-2.25cm}  
  \renewcommand{\squarewidth}{0.375cm}
  \coordinate [label={below right:$ $}] (S1) at (\sx, \sy);
  \coordinate [label={above right:$ $}] (S2) at (\sx, \sy+\squarewidth);
  \coordinate [label={below left:\scriptsize$ $}]  (S3) at (\sx+\squarewidth, \sy+\squarewidth);
  \coordinate [label={below left:$ $}]  (S4) at (\sx+\squarewidth, \sy);
  \draw [thick] (S1) -- (S2) -- (S3) -- (S4) -- (S1);  

  \renewcommand{\sx}{1.5cm}
  \renewcommand{\sy}{-1.8750cm}  
  \renewcommand{\squarewidth}{0.375cm}
  \coordinate [label={below right:$ $}] (S1) at (\sx, \sy);
  \coordinate [label={above right:$ $}] (S2) at (\sx, \sy+\squarewidth);
  \coordinate [label={below left:\scriptsize$ $}]  (S3) at (\sx+\squarewidth, \sy+\squarewidth);
  \coordinate [label={below left:$ $}]  (S4) at (\sx+\squarewidth, \sy);
  \draw [thick] (S1) -- (S2) -- (S3) -- (S4) -- (S1);  

  \renewcommand{\sx}{2.25cm}
  \renewcommand{\sy}{-1.5cm}  
  \renewcommand{\squarewidth}{0.375cm}
  \coordinate [label={below right:$ $}] (S1) at (\sx, \sy);
  \coordinate [label={above right:$ $}] (S2) at (\sx, \sy+\squarewidth);
  \coordinate [label={below left:\scriptsize$ $}]  (S3) at (\sx+\squarewidth, \sy+\squarewidth);
  \coordinate [label={below left:$ $}]  (S4) at (\sx+\squarewidth, \sy);
  \draw [thick] (S1) -- (S2) -- (S3) -- (S4) -- (S1);  

  \renewcommand{\sx}{1.875cm}
  \renewcommand{\sy}{-0.75cm}  
  \renewcommand{\squarewidth}{0.375cm}
  \coordinate [label={below right:$ $}] (S1) at (\sx, \sy);
  \coordinate [label={above right:$ $}] (S2) at (\sx, \sy+\squarewidth);
  \coordinate [label={below left:\scriptsize$ $}]  (S3) at (\sx+\squarewidth, \sy+\squarewidth);
  \coordinate [label={below left:$ $}]  (S4) at (\sx+\squarewidth, \sy);
  \draw [thick] (S1) -- (S2) -- (S3) -- (S4) -- (S1);  

  \renewcommand{\sx}{2.6250cm}
  \renewcommand{\sy}{-0.75cm}  
  \renewcommand{\squarewidth}{0.375cm}
  \coordinate [label={below right:$ $}] (S1) at (\sx, \sy);
  \coordinate [label={above right:$ $}] (S2) at (\sx, \sy+\squarewidth);
  \coordinate [label={below left:\scriptsize$ $}]  (S3) at (\sx+\squarewidth, \sy+\squarewidth);
  \coordinate [label={below left:$ $}]  (S4) at (\sx+\squarewidth, \sy);
  \draw [thick] (S1) -- (S2) -- (S3) -- (S4) -- (S1);  

  \renewcommand{\sx}{3.75cm}
  \renewcommand{\sy}{-0.75cm}  
  \renewcommand{\squarewidth}{0.375cm}
  \coordinate [label={below right:$ $}] (S1) at (\sx, \sy);
  \coordinate [label={above right:$ $}] (S2) at (\sx, \sy+\squarewidth);
  \coordinate [label={below left:\scriptsize$ $}]  (S3) at (\sx+\squarewidth, \sy+\squarewidth);
  \coordinate [label={below left:$ $}]  (S4) at (\sx+\squarewidth, \sy);
  \draw [thick] (S1) -- (S2) -- (S3) -- (S4) -- (S1);  

  \renewcommand{\sx}{3.3750cm}
  \renewcommand{\sy}{-1.125cm}  
  \renewcommand{\squarewidth}{0.375cm}
  \coordinate [label={below right:$ $}] (S1) at (\sx, \sy);
  \coordinate [label={above right:$ $}] (S2) at (\sx, \sy+\squarewidth);
  \coordinate [label={below left:\scriptsize$ $}]  (S3) at (\sx+\squarewidth, \sy+\squarewidth);
  \coordinate [label={below left:$ $}]  (S4) at (\sx+\squarewidth, \sy);
  \draw [thick] (S1) -- (S2) -- (S3) -- (S4) -- (S1);  

  \renewcommand{\sx}{4.1250cm}
  \renewcommand{\sy}{2.25cm}  
  \renewcommand{\squarewidth}{0.375cm}
  \coordinate [label={below right:$ $}] (S1) at (\sx, \sy);
  \coordinate [label={above right:$ $}] (S2) at (\sx, \sy+\squarewidth);
  \coordinate [label={below left:\scriptsize$ $}]  (S3) at (\sx+\squarewidth, \sy+\squarewidth);
  \coordinate [label={below left:$ $}]  (S4) at (\sx+\squarewidth, \sy);
  \draw [thick] (S1) -- (S2) -- (S3) -- (S4) -- (S1);  

  \renewcommand{\sx}{4.5cm}
  \renewcommand{\sy}{1.875cm}  
  \renewcommand{\squarewidth}{0.375cm}
  \coordinate [label={below right:$ $}] (S1) at (\sx, \sy);
  \coordinate [label={above right:$ $}] (S2) at (\sx, \sy+\squarewidth);
  \coordinate [label={below left:\scriptsize$ $}]  (S3) at (\sx+\squarewidth, \sy+\squarewidth);
  \coordinate [label={below left:$ $}]  (S4) at (\sx+\squarewidth, \sy);
  \draw [thick] (S1) -- (S2) -- (S3) -- (S4) -- (S1);  

  \renewcommand{\sx}{3.3750cm}
  \renewcommand{\sy}{1.875cm}  
  \renewcommand{\squarewidth}{0.375cm}
  \coordinate [label={below right:$ $}] (S1) at (\sx, \sy);
  \coordinate [label={above right:$ $}] (S2) at (\sx, \sy+\squarewidth);
  \coordinate [label={below left:\scriptsize$ $}]  (S3) at (\sx+\squarewidth, \sy+\squarewidth);
  \coordinate [label={below left:$ $}]  (S4) at (\sx+\squarewidth, \sy);
  \draw [thick] (S1) -- (S2) -- (S3) -- (S4) -- (S1);  
  \end{tikzpicture}
  \caption{The fractal computational domain used in
  Section~\ref{sec:fractalnum}. We use homogeneous Dirichlet boundary conditions
  on the boundary segments marked by a thicker line and Robin boundary
  conditions elsewhere.}
  \label{fig:fractal1}
\end{figure}
We compute localized correctors in the full domain. The reference solution is
computed using $h=2^{-9}$. As seen in Figure~\ref{fig:fraktal}, a even higher
convergence rate than linear convergence to the reference solution is observed. We also see that the
correct scaling of the condition number with respect to the
coarse mesh parameter $H$.
\begin{figure}[htb]
  \centering
  \includegraphics{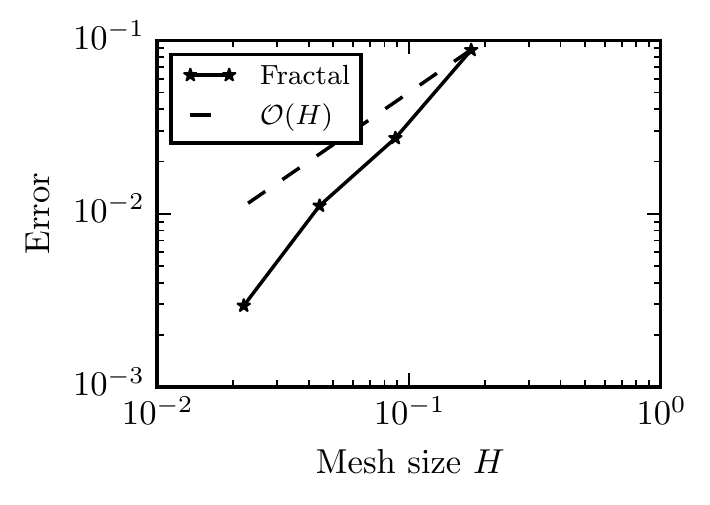}
  \includegraphics{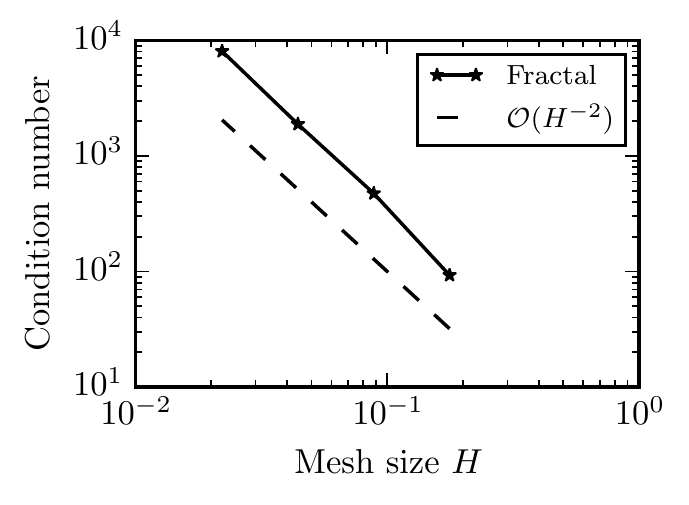}
  \caption{The convergence rate to the reference solution in relative
    energy norm (left) and the scaling of the condition number (right)
    for the fractal domain in Figure~\ref{fig:fraktal}.}
  \label{fig:fraktal}
\end{figure}

\subsection{Locally added correctors around singularities}
Let us consider two different domains, a domain with a re-entrant corner
$([0,1]\times[0,1])\backslash ([0.5,1]\times[0,0.5])$ and a slit-domain
$([0,1]\times[0,1])\backslash ([0.5,0.5]\times[0,0.5])$ with homogeneous
Dirichlet boundary condition. We only compute correctors in a vicinity of the
singularities, and denote this domain $\omega^k$. The size of $\omega^k$ can be determined by the size
of $|u|_{H^2(\omega^k)}$ (see Section~\ref{sec:reference}), i.e., how fast the
singularity decay away from the boundary. We choose $\omega^k$ as in
Figure~\ref{fig:singularitiesArea}.

As seen in Figure~\ref{fig:singularities}, the correct
convergence rates to the reference solutions and the correct scaling of the
condition number are observed for both singularities.
\begin{figure}[htb]
  \centering
  \includegraphics[scale = 0.3]{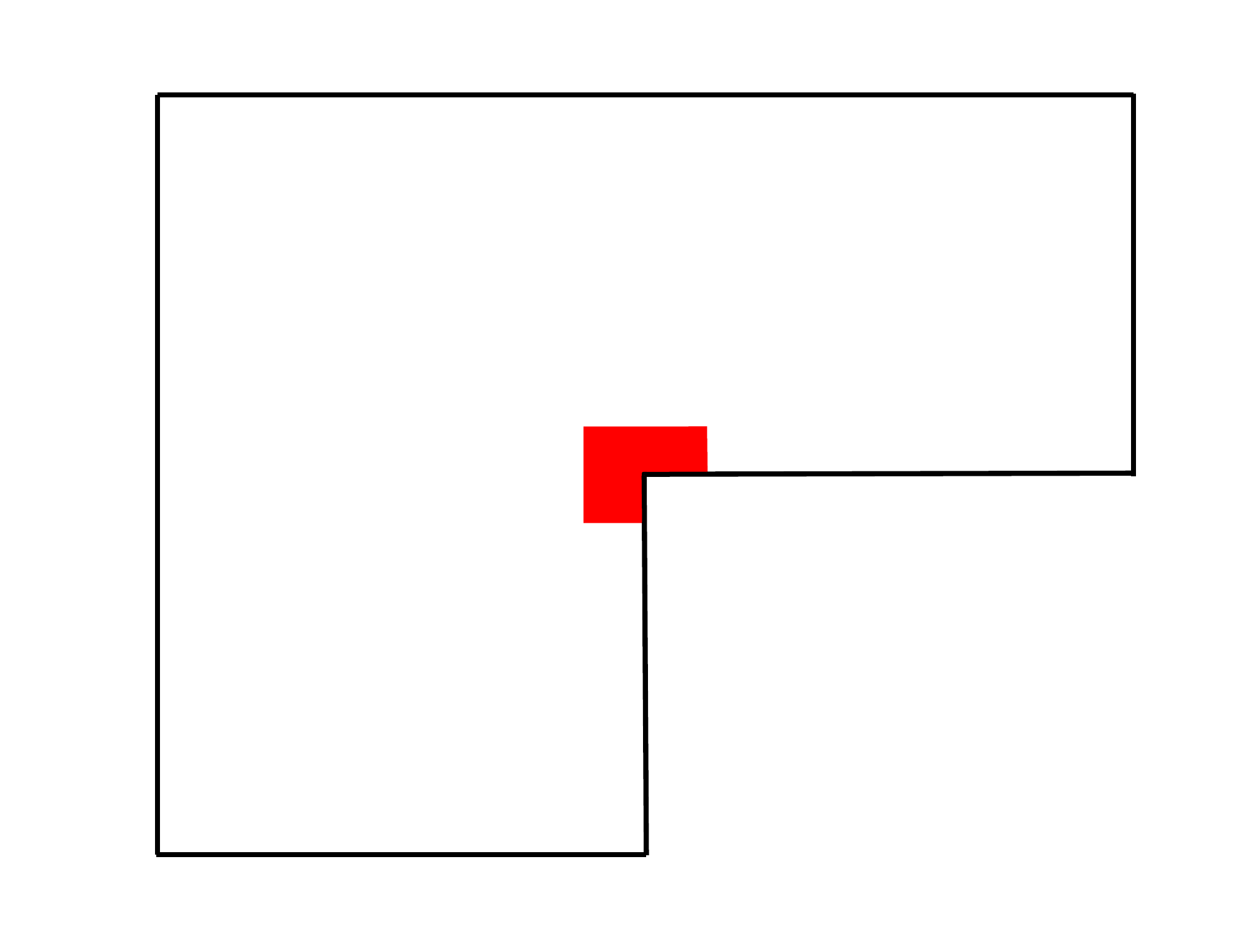}
  \includegraphics[scale = 0.3]{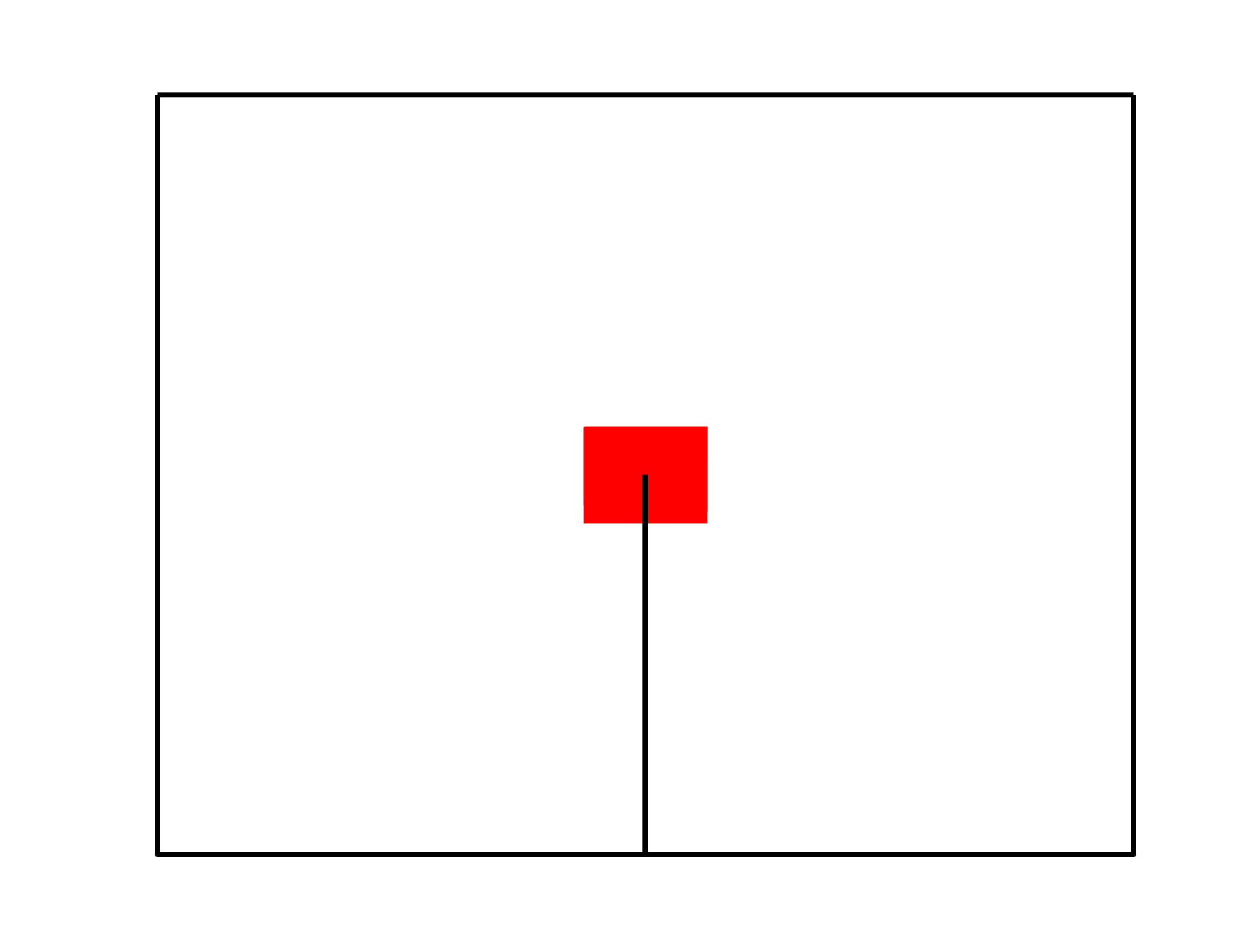}
  \caption{The marked area $\omega^k$ is where the finite element space is enriched,
    for domain with a re-entrant corner (left) and domain with a slit
    (right).}
  \label{fig:singularitiesArea}
\end{figure}
\begin{figure}[htb]
  \centering
  \includegraphics{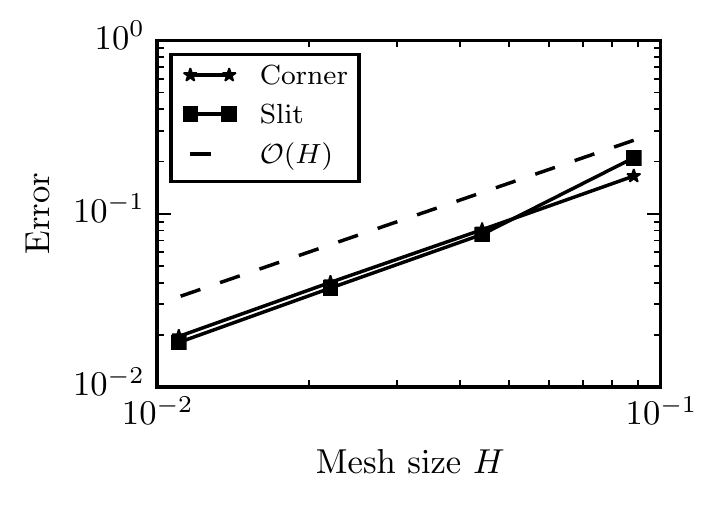}
  \includegraphics{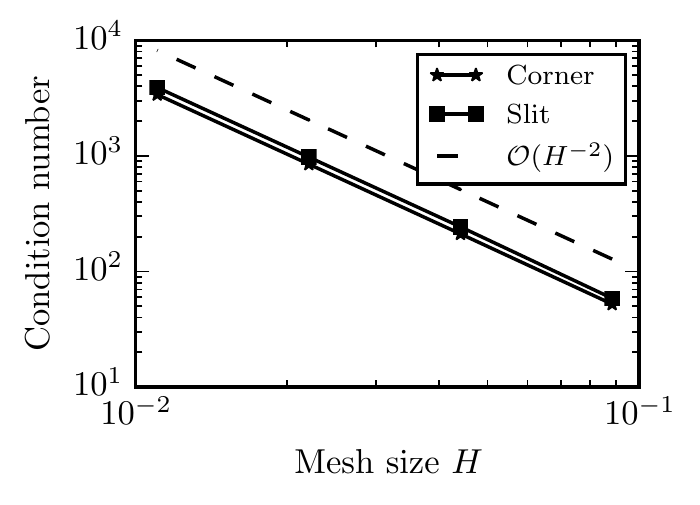}
  \caption{The convergence rate to the reference solution in relative
    energy norm (left) and scaling of the condition number (right) for
    the re-entrant corner and the slit domain.}
  \label{fig:singularities}
\end{figure}

\subsection{Locally add correctors around saw tooth boundary}
Let us consider a unit square where one of the boundaries are cut as a saw
tooth and where correctors are only computed in a vicinity of the saw teeth
shown in Figure~\ref{fig:sawDomainArea}. On all the non saw tooth boundaries
we use homogeneous Dirichlet boundary conditions. On the saw tooth boundary we
test both homogeneous Dirichlet and Neumann boundary condition. We observe the
correct convergence (although the Neumann case is a bit more sensitive and
might need slightly larger domain where correctors are computed) and scaling
of the condition number, see Figure~\ref{fig:sawconvergence}.
\begin{figure}[htb]
  \centering
  \includegraphics[height=4cm, width=4cm]{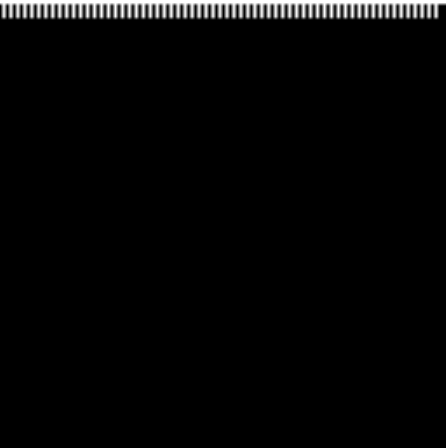}
  \hspace{2cm}
  \includegraphics[height=4cm, width=4cm]{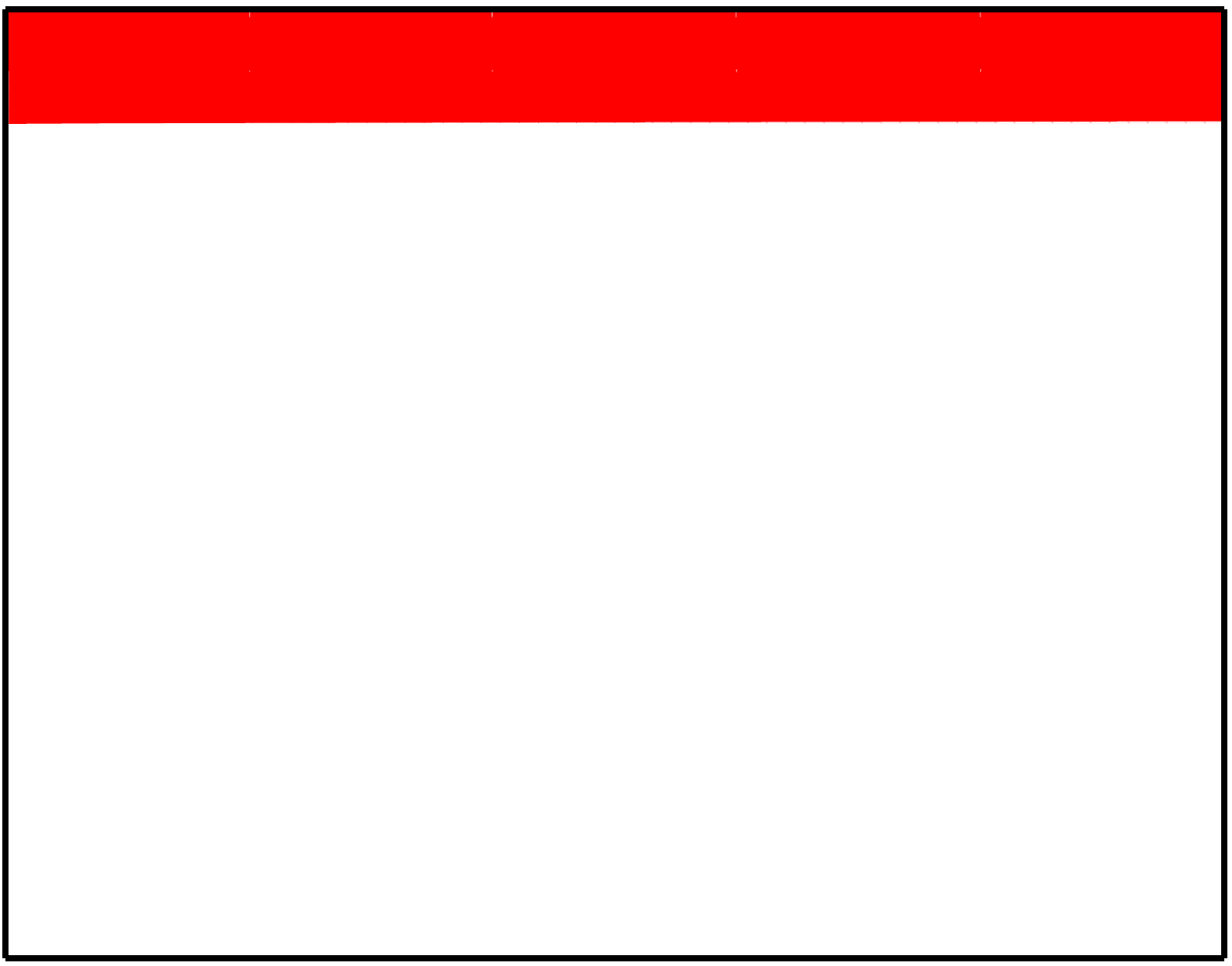}
  \caption{We consider the saw tooth domain (left). The marked area (right) is
    where the finite element space is enriched.}
  \label{fig:sawDomainArea}
\end{figure}
\begin{figure}[htb]
  \centering
  \includegraphics{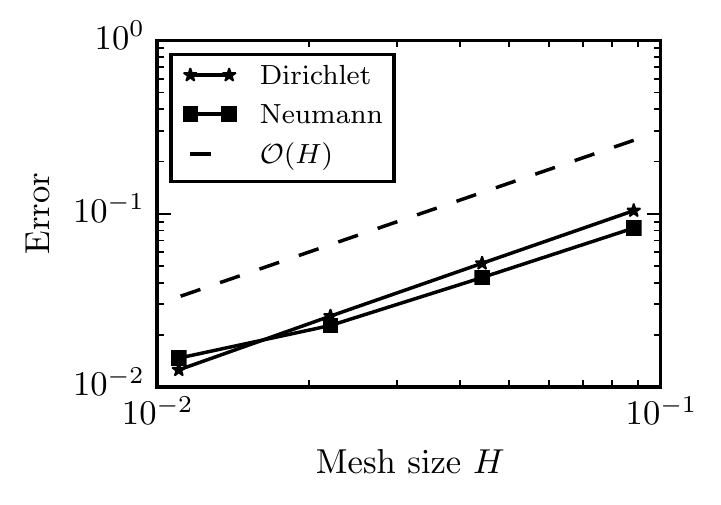}
  \includegraphics{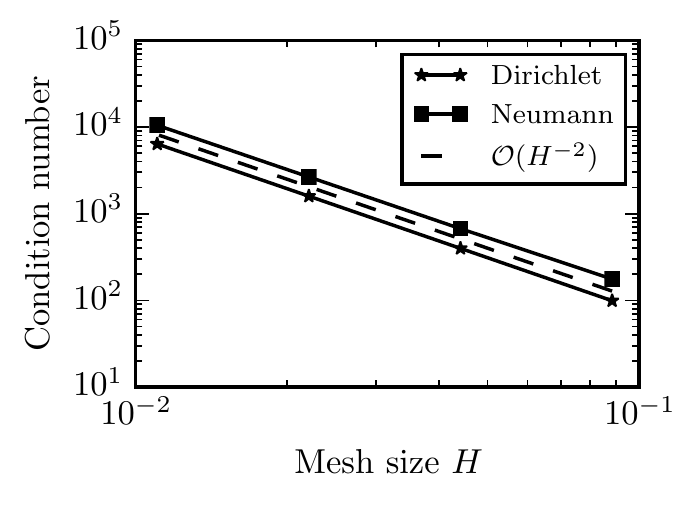}
  \caption{The convergence rate to the reference solution in relative
    energy norm (left) and the scaling of the condition number (right)
    for the saw tooth shaped boundary using different boundary
    condition on the saw tooths.}
  \label{fig:sawconvergence}
\end{figure}

\subsection{Accuracy and conditioning on cut domains}
Let us consider a domain with a re-entrant corner
$\Omega=([0,1]\times[0,1])\backslash ([0.5,1]\times[0,0.5])$. We want to
investigate how the location of the boundary relative to the coarse background mesh affects the accuracy in the approximation and the
conditioning of the matrix. We fix the coarse $H=2^{-3}$
and fine $h=2^{-8}$ mesh sizes and consider three different setups of
boundary condition, homogeneous Dirichlet on the whole boundary,
homogeneous Dirichlet on the cut elements and homogeneous Neumann
otherwise, and homogeneous Neumann on the cut elements and homogeneous
Dirichlet otherwise. We will cut the coarse mesh in two different
ways, 1) with a horizontal cut and 2) a circular cut around the
reentered corner, see Figure~\ref{fig:cut}. The errors are measured in
energy norm. The results are presented in Table~\ref{tab:cut1} and
\ref{tab:cut2}. We conclude that neither the error nor the
conditioning are sensitive to how the domain is cut.
\begin{figure}[htb]
  \centering
  \includegraphics[scale = 0.4]{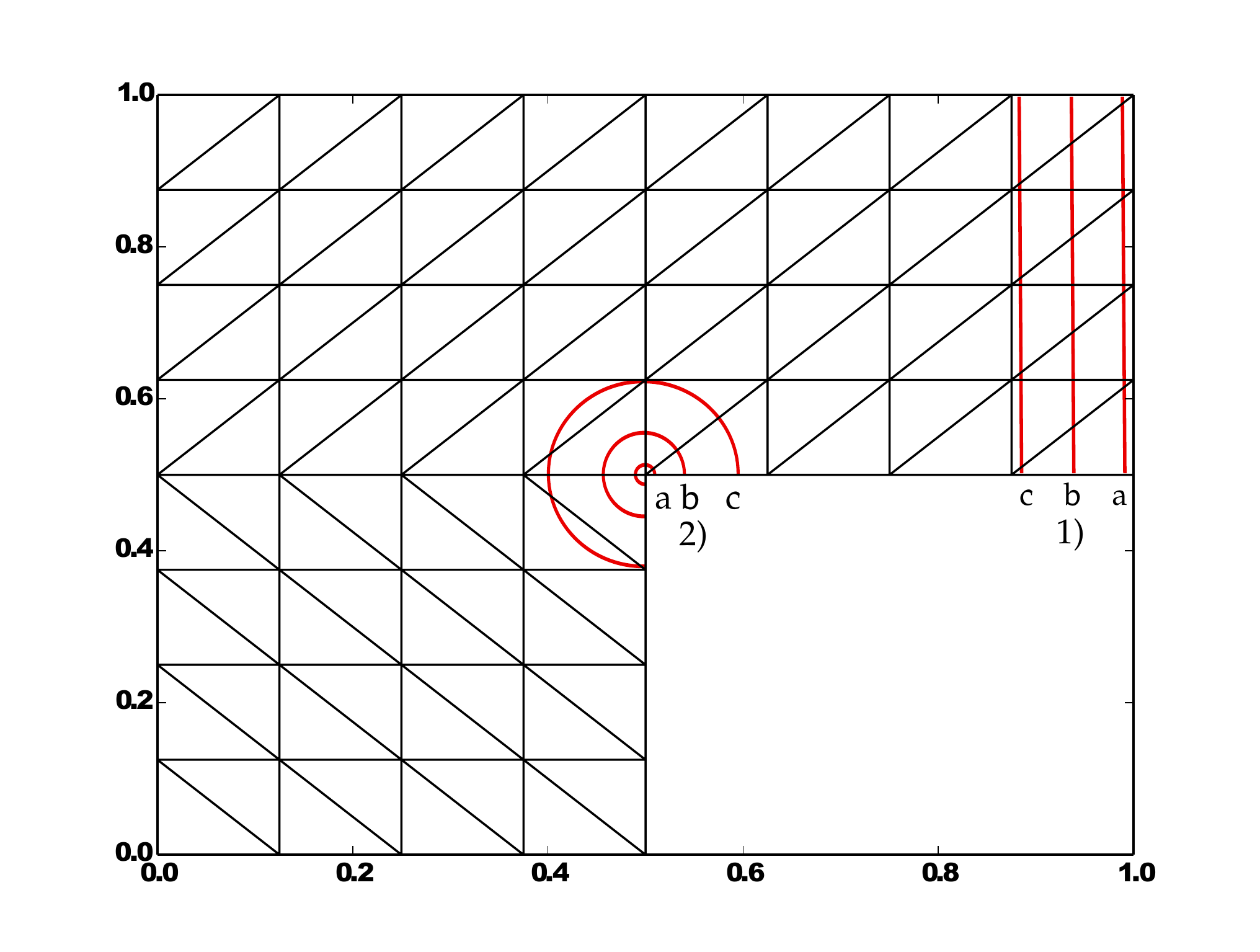}
  \caption{A given background mesh that is cut in two different ways 1) and 2) with different size of the cut a), b), and c).}
  \label{fig:cut}
\end{figure}
\begin{table}[h!tb]
  \centering
  \begin{tabular}{c|c|c|c|c}
  $\Gamma_{cut}$ & $\partial\Omega\setminus\Gamma_{cut}$    & $e_{\mathrm{rel}}(a)$ &$e_{\mathrm{rel}}(b)$ & $e_{\mathrm{rel}}(c)$ \\ \hline
    D & D & $0.059$ & $0.057$ & $0.056$ \\
    D & N & $0.018$ & $0.019$ & $0.020$ \\
    N & D & $0.063$ & $0.055$ & $0.053$  \\ \hline
   & & $\mathrm{cond}(a)$ &$\mathrm{cond}(b)$ & $\mathrm{cond}(c)$ \\ \hline
    D & D & $9.85$ & $10.10$  & $13.63$  \\
    D & N & $299.75$ & $282.26$ & $353.27$ \\
    N & D & $10.537$ & $10.79$  & $11.47$
  \end{tabular}
  \caption{For cut $1$ we have $L\cap [0,1-r]\times[0,1]$, where  $r = \{h,0.5H,H-h\}$ in $a)$, $b)$, and $c)$, respectively. The errors measured in relative energy norm and condition number of the coarse stiffness matrix are presented. We try the different boundary conditions, $D$ (Dirichlet) and $N$ (Neumann), on the boundary segement $\Gamma_{cut}$, which cuts the elements.}
  \label{tab:cut1}
\end{table}

\begin{table}[h!tb]
  \centering
  \begin{tabular}{c|c|c|c|c}
    $\Gamma_{cut}$ & $\partial\Omega\setminus\Gamma_{cut}$  & $e_{\mathrm{rel}}(a)$ &$e_{\mathrm{rel}}(b)$ & $e_{\mathrm{rel}}(c)$ \\ \hline
    D & D  & $0.060$ & $0.064$ & $0.073$ \\
    D & N & $0.0205$ & $0.035$ & $0.048$  \\
    N & D & $0.060$ & $0.057$ & $0.059$  \\ \hline
    & & $\mathrm{cond}(a)$ &$\mathrm{cond}(b)$ & $\mathrm{cond}(c)$ \\ \hline
    D & D & $9.80$   & $ 9.23$  & $7.03$ \\
    D & N & $246.99$ & $107.52$ & $59.67$ \\
    N & D & $9.90$   & $11.44$  & $12.16$
  \end{tabular}
  \caption{For cut $2$ we have $L\setminus B(x_0,r)$ for a ball $B$ centered in $x_0=(0.5,0.5)$ and with radius $r$, where  $r = \{h,0.5H,H\}$ in $a)$, $b)$, and $c)$, respectively.  The errors measured  in energy norm and condition number of the coarse stiffness matrix are presented. We try the different boundary conditions, $D$ (Dirichlet) and $N$ (Neumann), on the boundary segement $\Gamma_{cut}$, which cuts the elements.}
  \label{tab:cut2}
\end{table}

\newpage
\appendix

\section{Proofs}\label{app:proofs}
In the appendix we present the proof of Lemma \ref{lem:decay}.
\begin{proof}[Proof of Lemma~\ref{lem:decay}]
  We will make frequent use of the cut off function $\eta_x^{k-1,k}$ which
  satisfy $\eta_x^{k-1,k}=0$ in $\omega_x^{k-1}$, $\eta_x^{k-1,k}=1$ in
  $\Omega\setminus\omega_x^{k}$, and
  $\|\nabla\eta_x^{k-1,k}\|_{L^\infty(\Omega)}\lesssim H^{-1}$. Given
  $v_H\in\V_H$ with nodal values $v_x$ we define define $e=(Q - Q^L)(v_H)$ and
  $e=\sum_{x\in\N_I}e_x$ where $e_x = (Q-Q^L_x)v_x\varphi_x$, we obtain
  \begin{equation}\label{eq:decay1}
    \begin{aligned}
      |||e|||^2 &\lesssim a(e,e)=\sum_{x\in\mathcal{N}_I}a\left((Q_x -
        Q^L_x)v_x\varphi_x, e\right) = \sum_{x\in\mathcal{N}_I}a\left( e_x,e - \tilde v_x\right)
    \end{aligned}
  \end{equation}
  for $\tilde v_x$ which satisfy
  \begin{equation}\label{eq:nowtrue}
    a(e_x,\tilde v_x) = 0.
  \end{equation}
  We choose $\tilde v_x=\eta_x^{L+2,L+1}e -
  \mathcal{I}_H\eta_x^{L+2,L+1}e+g_x$ where $\mathcal{I}_H g_x = 0$,
  $g_x|_{\partial\Omega}=\mathcal{I}_H \eta_x^{L+2,L+1}e$, and $|||g_x|||\leq
  |||\mathcal{I}_H \eta_x^{L+2,L+1}e|||$. It is possible to construct $g_x$
  with the given properties, see the proof of Lemma~\ref{lem:g}. Equation
  \eqref{eq:nowtrue} now follows since $\tilde v_x\in\V^\mathrm{f}$ and
  $\supp(\tilde v_x)\cap\supp(Q^L_x(v))=0$. For each $x\in \mathcal{N}_I$, we
  obtain
  \begin{equation}
    a\left(e_x,e-\tilde v_x\right)\leq|||e_x|||\cdot|||e - \tilde v_x|||
  \end{equation}
  where we split
  \begin{equation}\label{eq:decay2}
    |||e - \tilde v_x||| \leq |||e - \eta_x^{L+2,L+1}e||| + |||\eta_x^{L+2,L+1}e - \tilde v_x|||.
  \end{equation}
  The first term in \eqref{eq:decay2} can be bounded as
  \begin{equation}\label{eq:decay3}
    \begin{aligned}
      &|||(1-\eta_T^{L+2,L+1})e|||^2_{\omega_x^{L+2}} \lesssim \|\nabla(1-\eta_T^{L+2,L+1})e\|^2_{\omega_x^{L+2}}+ \|\kappa(1-\eta_T^{L+2,L+1})e\|^2_{\partial \omega_x^{L+2}\cap \Gamma } \\
      &\qquad\leq\|\nabla e\|^2_{\omega_x^{L+2}}+ H^{-1}\|e\|^2_{\omega_x^{L+2}}+ \|\kappa(1-\eta_T^{L+2,L+1})e\|^2_{\partial \omega_x^{L+2}\cap \Gamma } \\
      &\qquad \leq |||e|||^2_{\omega_x^{L+3}}
    \end{aligned}
  \end{equation}
  using the product rule, inverse estimates, and interpolation
  estimates. The second term in \eqref{eq:decay2} can be bounded as
  \begin{equation}\label{eq:decay4}
    \begin{aligned}
      |||\eta_x^{L+2,L+1}e - \tilde v_x|||^2 = \|\nabla\mathcal{I}_H(\eta^L_Te)\|^2+ \|\kappa\mathcal{I}_H(\eta^L_Te)\|^2_{\Gamma} \lesssim |||e|||^2_{\omega^{L+4}_x}
    \end{aligned}
  \end{equation}
  where we used
  \begin{equation}
    \begin{aligned}
    \|\nabla\mathcal{I}_H(\eta_x^{L+2,L+1}e)\|^2& = \|\nabla(\mathcal{I}_H\eta_x^{L+2,L+1}e)\|^2_{\omega^{L+3}_x\setminus\omega^{L}_x}\lesssim H^{-2}\|e\|^2_{\omega^{L+3}_x\setminus\omega^{L}_x} \\
    &\leq |||e|||^2_{\omega^{L+4}_x\setminus\omega^{L-1}_x}\leq |||e|||^2_{\omega^{L+4}_x}
    \end{aligned}
  \end{equation}
   and
  \begin{equation}
    \begin{aligned}
      \|\kappa\mathcal{I}_H(\eta_x^{L+2,L+1}e)\|^2_{\Gamma_x^L} &= \sum_{E\in \Gamma_x^L} \|\kappa\mathcal{I}_H(\eta_x^{L-2,L-1}e)\|^2_{E}  \leq \kappa H^{-1} \|\mathcal{I}_H(\eta_{\omega^{L-1}_x}e)\|^2_{\omega_x^{L+3}} \\
      &\leq \kappa H^{-1} \|e-\mathcal{I}_He\|^2_{\omega_x^{L+3}}
      \leq \kappa H \|\nabla e\|^2_{\omega_x^{L+4}} \leq
      |||e|||^2_{\omega^{L+4}_x}
     \end{aligned}
  \end{equation}
  which follows from Lemma~\ref{lem:approxIH} and that $\kappa \lesssim H^{-1}$. Hence, combining
  \eqref{eq:decay1}, \eqref{eq:decay2}, \eqref{eq:decay3}, and \eqref{eq:decay4} we obtain
  \begin{equation}\label{eq:decaystepomplete}
    \begin{aligned}
      |||e|||^2 &\lesssim \sum_{x\in\mathcal{N}_I} |||e_x||| \cdot |||e |||_{\omega^{L+4}_x} \lesssim L^{d} \left(\sum_{x\in\mathcal{N}_I} |||e_x|||^2\right)^{1/2}|||e|||
    \end{aligned}
  \end{equation}
  that is
  \begin{equation}
    |||e|||^2 \lesssim L^d \sum_{x\in\mathcal{N}_I} |||e_x|||^2
  \end{equation}
  We use $\tilde w_x= (1-\eta_x^{L-1,L-2})Q(v_x\varphi_x) +
  \mathcal{I}_H\eta_x^{L-1,L-2}Q(v_x\varphi_x)+\tilde g_x\in\V^f(\omega^L_x)$ where we use the same construction of $\tilde g_x$ as for $g_x$ and obtain
  \begin{equation}
    \begin{aligned}
      |||e_x|||^2 \leq a(e_x,e_x) & = a(e_x, Q(v_x\varphi_x) -\tilde w_x) \\
      & = a(e_x,\eta_x^{L-1,L-2} Q(v_x\varphi_x) - \mathcal{I}_H\eta_x^{L-1,L-2}Q(v_x\varphi_x)+g_x) \\
      & \lesssim |||e_x|||(||| \eta_x^{L-1,L-2} Q(v_x\varphi_x)||| + |||\mathcal{I}_H\eta_x^{L-1,L-2}Q(v_x\varphi_x) |||) \\
      & \lesssim |||e_x|||\cdot|||Q(v_x\varphi_x)|||_{\Omega\setminus\omega_x^{L-3}}
    \end{aligned}
  \end{equation}
  and hence
  \begin{equation}\label{eq:decaystep1}
  	|||e_x||| \leq |v_x||||Q(\varphi_x)|||_{\Omega\setminus\omega_x^{L-3}}
  \end{equation}
  Next we construct a recursive scheme which will be used to show the
  decay. We obtain
  \begin{equation}\label{eq:decay_21}
    \begin{aligned}
      |||Q\varphi_x|||_{\Omega\setminus\omega^{k}_x}^2 &\leq \int_{\Omega} \eta_T^{k,k-1}\nabla Q\varphi_x\nabla Q\varphi_x\dx + \int_{\Gamma_R}\eta_T^{k,k-1}\kappa Q\phi_xQ\varphi_x\,\mathrm{d}S \\
      & = \int_{\Omega} \nabla Q\varphi_x\nabla (\eta_T^{k,k-1}Q\varphi_x)\dx + \int_{\Gamma_R}\kappa Q\varphi_x(\eta_T^{k,k-1}Q\varphi_x)\,\mathrm{d}S \\
      & \qquad - \int_{\Omega} Q\varphi_x\nabla Q\varphi_x \nabla \eta_T^{k,k-1}\dx \\
      & = a(Q\varphi_x, \eta_T^{k,k-1}Q\varphi_x) - \int_\Omega Q\varphi_x\nabla Q\varphi_x \nabla \eta_T^{k,k-1}\dx.
    \end{aligned}
  \end{equation}
  The first term in \eqref{eq:decay_21} can be bounded as
  \begin{equation}\label{eq:decay_22}
    \begin{aligned}
      a(Q\varphi_x, \eta_x^{k,k-1}Q\varphi_x) &= a(Q\varphi_x, \eta_x^{k,k-1}Q\varphi_x-\mathcal{I}_H\eta_x^{k,k-1}Q\varphi_x-\hat g_x)+a(Q\varphi_x,\mathcal{I}_H\eta_x^{k,k-1}Q\varphi_x+\hat g_x) \\
      & = a(Q\varphi_x,\mathcal{I}_H\eta_x^{k,k-1}Q\varphi_x+\hat g_x) \lesssim |||Q\varphi_x|||_{\omega^k_x\setminus\omega^{k-1}_x}|||\mathcal{I}_H\eta_x^{k,k-1}Q\varphi_x ||| \\
      & \lesssim |||Q\varphi_x|||_{\omega^k_x\setminus\omega^{k-1}_x}|||Q\varphi_x ||||_{\omega^{k+1}_x\setminus\omega^{k-2}_x}|||Q\varphi_x |||_{\omega^{k+1}_x\setminus\omega^{k-2}_x}^2
    \end{aligned}
  \end{equation}
  where again we construct $\hat g_x$ such that $|||\hat g_x|||\lesssim
 |||\mathcal{I}_H\eta_x^{k,k-1}Q\varphi_x |||$ and  use
 Lemma~\ref{lem:approxIH}. The second term is bounded as
  \begin{equation}\label{eq:decay_23}
    \begin{aligned}
      \int_{\Omega} Q\varphi_x\nabla Q\varphi_x \nabla \eta_x^{k,k-1}\dx &\leq H^{-1}\|Q\varphi_x- \mathcal{I}_HQ_x v\|_{\omega^k_x\setminus\omega^{k-1}_x}|||Q\varphi_x|||_{\omega^k_x\setminus\omega^{k-1}_x} \\
      &\lesssim |||Q\varphi_x |||_{\omega^{k+1}_x\setminus\omega^{k-2}_x}|||Q\varphi_x ||||_{\omega^{k+1}_x\setminus\omega^{k-2}_x} \leq |||Q\varphi_x |||_{\omega^{k+1}_x\setminus\omega^{k-2}_x}^2
    \end{aligned}
  \end{equation}
  Combining \eqref{eq:decay_21}, \eqref{eq:decay_22}, and
  \eqref{eq:decay_23} we obtain
  \begin{equation}
    \begin{aligned}
    |||Q\varphi_x|||_{\Omega\setminus\omega^k_x}^2 &\leq C_1 |||Q\varphi_x |||_{\omega^{k+1}_x\setminus\omega^{k-2}_x}^2 = C_1\left( |||Q\varphi_x |||_{\Omega\setminus\omega^{k-2}_x}^2-|||Q\varphi_x |||_{\Omega_x\setminus\omega^{k+1}_x}^2\right) \\
    &\leq C_1\left( |||Q\varphi_x |||_{\Omega\setminus\omega^{k-2}_x}^2-|||Q\varphi_x |||_{\Omega_x\setminus\omega^{k}_x}^2\right)
    \end{aligned}
  \end{equation}
  and hence
  \begin{equation}\label{eq:recursively}
    |||Q(v_x\varphi_x)|||_{\Omega\setminus\omega^k_x}^2 \leq \gamma|||Q(v_x\varphi_x)|||_{\Omega\setminus\omega^{k-2}_x}^2,
  \end{equation}
  where $\gamma = \frac{C_1}{1+C_1}$. Using \eqref{eq:recursively} recursively we obtain
  \begin{equation}\label{eq:recursively2}
    \begin{aligned}
&      |||Q\varphi_x|||_{\Omega\setminus\omega^{L-3}}^2 \leq \gamma^{k}|||Q\varphi_x|||_{\Omega\setminus\omega^{L-3(k+1)}_x}^2 \\
\Leftrightarrow \quad& |||Q\varphi_x|||_{\Omega\setminus\omega^{L-3}}^2 \leq \gamma^{\left\lfloor (L-3)/3 \right\rfloor}|||Q\varphi_x|||_{\Omega\setminus\omega^{0}_x}^2 \leq \gamma^{\left\lfloor (L-3)/3 \right\rfloor}|||Q\varphi_x|||^2.
    \end{aligned}
  \end{equation}
  Combing \eqref{eq:decaystepomplete}, \eqref{eq:decaystep1}, and, \eqref{eq:recursively2} we get
  \begin{equation}
    ||| e |||^2 \lesssim L^{d}\gamma^{\left\lfloor (L-3)/3 \right\rfloor}\sum_{x\in\mathcal{N}_I} v_x^2|||Q\varphi_x|||^2
  \end{equation}
  which concludes the lemma.
\end{proof}

\bibliographystyle{abbrv}
\bibliography{references}
\end{document}